\newcommand{\jump}[1]{\left[\mkern-1.5mu \left[#1\right] \mkern-1.5mu\right]}
\newcommand{\avg}[1]{\left\{ \mkern-5mu \left\{#1 \right\} \mkern-5mu \right\}}
\theoremstyle{plain}
\newtheorem{theorem}{Theorem}
\newtheorem{lemma}[theorem]{Lemma}
\newtheorem{remark}[theorem]{Remark} 
\newtheorem{assumption}[theorem]{Assumption}
\newtheorem{definition}[theorem]{Definition}
\newtheorem{proposition}[theorem]{Proposition}
\begin{document}
\title{Discontinuous Galerkin approximation of the fully-coupled thermo-poroelastic problem \footnotemark[1]}

\author{ Paola F. Antonietti\footnotemark[2] \ \footnotemark[3] \and 
Stefano Bonetti \footnotemark[2] \ \footnotemark[4] \and Michele Botti \footnotemark[2] \  \footnotemark[5]}

\maketitle
\renewcommand{\thefootnote}{\fnsymbol{footnote}}
\footnotetext[1]{\textbf{Funding:} This work has received funding from the European Union’s Horizon 2020 research and innovation programme under the Marie Skłodowska-Curie grant agreement No. 896616   (project PDGeoFF).
P.F.A. has been partially funded by the research grants PRIN2017 n. 201744KLJL and PRIN2020 n. 20204LN5N5 funded by the Italian Ministry of Universities and Research (MUR). S.B., M.B., and P.F.A. are members of INdAM-GNCS.}
\footnotetext[2]{MOX-Laboratory for Modeling and Scientific Computing, Dipartimento di Matematica, Politecnico di Milano, Piazza Leondardo da Vinci 32, 20133 Milano, Italy.}
\footnotetext[3]{Email: paola.antonietti@polimi.it}
\footnotetext[4]{Email: stefano.bonetti@polimi.it}
\footnotetext[5]{Email: michele.botti@polimi.it}
\renewcommand{\thefootnote}{\arabic{footnote}}

\begin{abstract} We present and analyze a discontinuous Galerkin method for the numerical modelling of the non-linear fully-coupled thermo-poroelastic problem. For the spatial discretization, we design a high-order discontinuous Galerkin method on polygonal and polyhedral grids based on a novel four-field formulation of the problem. To handle the non-linear convective transport term in the energy conservation equation we adopt a fixed-point linearization strategy. We perform a robust stability analysis for the linearized semi-discrete problem under mild requirements on the problem data. A priori $hp$-version error estimates in suitable energy norms are also derived. A complete set of numerical simulations is presented in order to validate the theoretical analysis, to inspect numerically the robustness properties, and to test the capability of the proposed method in a practical scenario inspired by a geothermal problem.
\medskip
\newline
\textbf{Key-words:} discontinuous Galerkin, geothermal energy production, polytopal grids, poromechanics, robust estimates
\end{abstract}

\section{Introduction}
\label{sec:introduction}
Poroelasticity inspects the interaction among fluid flow and elastic deformations within a porous medium and finds its origin in the works of Biot \cite{Biot1941} and Terzaghi \cite{Terzaghi1943}. In several applications in the context of human geological activities, such as geothermal energy production and $CO_2$ sequestration, the temperature plays a key role in the description of the physical phenomena. In order to correctly describe these subsurface processes, the model should also take into account the influence of the temperature on the fluid flow and mechanical deformation, leading to a fully-coupled thermo-poroelastic (TPE) system of equations. 

In the framework of geosciences applications, the subsoil is modelled as a fully-saturated poroelastic material under the additional assumptions of small deformations and quasi-static regime. The TPE model derived in \cite{Brun2018} through a two-scale expansion \cite{Hornung1997} consists of three equations expressing the conservation of mass, momentum, and energy. For what concerns the first two equations, we can recognize the structure of the Biot's  system with an additional contribution of the temperature. In the conservation of mass, the temperature affects the fluid content, i.e. the amount of fluid that can be injected into a fixed control volume. In the momentum conservation equation, we observe the interplay among the Cauchy stress for the elastic skeleton and the isotropic terms coming from the pressure and the temperature. A complete study on the constitutive laws for the total stress tensor, the rate of change of fluid mass, and the rate of change of energy are presented in \cite{Coussy2003}. An alternative formulation of the TPE model is discussed in \cite{VanDuijn2019} under different assumptions on the deformations rates.
In the general case, the energy conservation equation takes into account both the conductive and convective contributions. Properly handling the convective term is one of the main challenge, since it yields an additional non-linear coupling between fluid flow and heat flux. We highlight that, for small values of the Péclet number, this term can be neglected, e.g. \cite{Gatmiri1997,Lee1997}, where it is assumed that the energy is balanced only by conduction. In our analysis, we tackle this term by the use of a proper iterative linearization procedure.

In \cite{Brun2019}, the well-posedness of the fully-coupled TPE model is proved by writing the problem in a six-field mixed form. The aim of this paper is to present and study a novel formulation with the introduction of only one additional (scalar) equation, leading to a numerical method that is more appealing from the computational point of view. Moreover, the introduction of the pseudo-total pressure variable ensures inf-sup stability and robustness with respect to locking phenomena in the quasi-incompressible limit. In \cite{Botti2020, Lee2017, Oyarzua2016} an analogous approach is considered for the poroelastic problem. Note that, a further possibility for dealing with the quasi-incompressible case in the analysis of the Biot's system is the introduction of the solid pressure (cf. \cite{Haga2012, Riviere2017}). In \cite{Khan2022}, a theoretical investigation on the advantages of considering the pseudo-total pressure is presented.

The spatial discretization of the aforementioned problem is set into the framework of the discontinuous Galerkin (DG) finite element methods. Examples of the application of DG schemes can be found for second-order elliptic problems \cite{Antonietti2013,Bassi2012}, parabolic equations \cite{Cangiani2017}, and poroelasticity problems \cite{Antonietti2019, Botti2016, DeLaPuente2008}. The DG methods are appealing since they guarantee both a high level of precision and flexibility. Moreover, as they can be recast in the context of polygonal and polyhedral grids (PolyDG \cite{Cangiani2014}), they can seamlessly handle complex geometries. 
The analysis of the proposed four-field semi-discrete problem is carried out in the spirit of \cite{Antonietti.Botti.ea:21} and \cite{Botti2021}. We establish a stability estimate under mild requirements on the problem data and tracking the dependencies on the model coefficients and final simulation time. 
We also show that the PolyDG formulation
satisfies a priori $hp$-version error estimates in a suitable energy norms. The theoretical results are supported
by numerical experiments on both benchmark and practical test cases.

The remaining part of the paper is structured as follows: in Section~\ref{sec:continuous_setting} we present the model problem, its four field formulation, and the linearization procedure to deal with the non-linear convective transport term. In Section~\ref{sec:DG_form} we derive the semi-discrete discontinuous Galerkin formulation. Then, in Section~\ref{sec:wp_semidiscr} and Section~\ref{sec:error_est} we derive the stability estimate and a priori $hp$-version error estimates for the semi-discrete problem, respectively. Finally, in Section~\ref{sec:numerical_test} we investigate the convergence performance and the robustness of the proposed method through a complete set of numerical tests with manufactured solutions. Moreover, a simulation inspired by a real case of geothermal energy production is presented.

\section{Model problem and its weak form}
\label{sec:continuous_setting}
Before presenting the differential problem, we introduce the notation for functional spaces.
Given a bounded Lipschitz domain $\omega \in \mathbb{R}^d, d \in \{2;3\}$, we denote by $L^p(\omega)$ the standard Lebesgue spaces on $\omega$ of index $p \in [1, \infty]$ and by $H^m(\omega)$ the Sobolev space of real-valued functions having weak partial derivatives of order up to $m>0$ in $L^2(\omega)$.
For sake of brevity, through the article we often adopt the notation: $(\cdot, \cdot)_{\omega} = (\cdot, \cdot )_{L^2(\omega)}, \ ||\cdot||_{\omega} = ||\cdot||_{L^2(\omega)}$. Finally, for a final time $T_f > 0$ and a Banach space $X$, we denote by $L^p((0,T_f]; X)$ the Bochner space of $X$-valued functions in $L^p((0,T_f])$ endowed with the norm 
\begin{equation}
    ||u||_{L^p((0,T_f];X)} = \left( \int_0^{T_f} ||u(t)||_X^p dt \right)^{\frac{1}{p}}.
\end{equation}

Let $\Omega \subset \mathbb{R}^d$, with $d \in \{2,3\}$, be an open bounded Lipschitz polygonal/polyhedral domain. The thermo-poroelasticity (TPE) problem \cite{Brun2020,Brun2018, Brun2019} reads:
\textit{Find $(\mathbf{u}, p, T)$ such that in $ \Omega \times (0,T_f]$ it holds:} 
\begin{subequations}
    \label{eq:QS_TPE_system}
    \begin{align}
    &\partial_t (a_0 T - b_0 p + \beta \nabla \cdot \mathbf{u}) - c_f \nabla T \cdot (\mathbf{K} \nabla p) - \nabla \cdot (\boldsymbol{\Theta} \nabla T ) = H, \label{eq:energy_cons} \\
    &\partial_t ( c_0 p - b_0 T + \alpha \nabla \cdot \mathbf{u}) - \nabla \cdot (\mathbf{K} \nabla p) = g,  \label{eq:mass_cons} \\
    &-\nabla \cdot \boldsymbol{\sigma}(\mathbf{u},p,T)= \mathbf{f}.  \label{eq:momentum_cons}
    \end{align}
\end{subequations}
Here the variables $\left(\mathbf{u},p,T\right)$ denote the displacement, the fluid pressure and the temperature distribution respectively, while $H, g, \mathbf{f}$ are the source terms, i.e. $H$ is a heat source, $g$ is a fluid mass source, and $\mathbf{f}$ is a body force. We assume that $g,H \in L^2((0,T_f];L^2(\Omega))$ and $\mathbf{f} \in H^1((0,T_f];L^2(\Omega))$. Note that in problem  \eqref{eq:QS_TPE_system}, $T$ represents the variation of the temperature distribution with respect to a reference value \cite{Coussy2003}. Equations \eqref{eq:energy_cons}, \eqref{eq:mass_cons}, and \eqref{eq:momentum_cons} express the conservation of energy, mass and momentum, respectively. We refer the reader to \cite{Brun2018} for the detailed derivation of model \eqref{eq:QS_TPE_system}. The total stress tensor $\boldsymbol{\sigma}$ is expressed in terms of the primary variables $\left(\mathbf{u},p,T\right)$ according to the constitutive law 
\begin{equation}
    \label{eq:const_law_sigma}
    \boldsymbol{\sigma}(\mathbf{u},p,T) = 2 \mu \boldsymbol{\epsilon}(\mathbf{u}) + \lambda \nabla \cdot \mathbf{u} \mathbf{I} - \alpha p \mathbf{I} - \beta T \mathbf{I}, 
\end{equation}
 where $\mathbf{I}$ is the identity tensor and $\boldsymbol{\epsilon}(\mathbf{u})= \frac{1}{2}(\nabla \mathbf{u} + \nabla \mathbf{u}^T)$ is the strain tensor.
Problem \eqref{eq:QS_TPE_system} is endowed with suitable initial conditions $\left(\mathbf{u_0}, p_0, T_0 \right)$, for which we assume the regularity $p_0,T_0 \in H^1_0(\Omega)$ and $\nabla \cdot \mathbf{u}_0 \in L^2(\Omega)$. For simplicity, we close the problem by considering homogeneous Dirichlet boundary conditions. 

\subsection{Thermo-poroelastic coefficients}
\label{subsubsec:relation_coeff}

The physical meaning and the unit of measure of the coefficients characterizing problem \eqref{eq:QS_TPE_system}-\eqref{eq:const_law_sigma} are reported in Table~\ref{tab:TPE_params}.
\begin{table}[H]
    \centering 
    \footnotesize
    \begin{tabular}{c | c  | l}
    \textbf{Notation} & \textbf{Quantity} & \textbf{Unit} \\
    \hline \hline
    $a_0$ & thermal capacity & \si[per-mode = symbol]{\pascal \per \kelvin \squared} \\
    $b_0$ & thermal dilatation coefficient & \si{\per \kelvin} \\
    $c_0$ & specific storage coefficient & \si{\per\pascal} \\
    $\alpha$ & Biot--Willis constant & - \\
    $\beta$ & thermal stress coefficient & \si[per-mode = symbol]{\pascal \per \kelvin} \\
    $c_f$  & fluid volumetric heat capacity divided by reference temperature & \si[per-mode = symbol]{\pascal \per \kelvin\squared} \\ 
    $\mu, \lambda$ & Lamé parameters & \si{\pascal} \\
    $\mathbf{K}$ & permeability divided by fluid viscosity & \si[per-mode = symbol]
    {\metre\squared \per \pascal \per \second} \\
    $\boldsymbol{\Theta}$ & effective thermal conductivity & \si[per-mode = symbol]{\metre\squared \pascal \per \kelvin\squared \per \second} \\
    $K_s$ & matrix bulk modulus & \si{\pascal} \\
    $K_f$ & fluid tangent bulk modulus & \si{\pascal} \\
    $a_f$ & fluid tangent coefficient of volumetric
    thermal dilation & \si{\per \kelvin} \\
    $\phi$ & porosity & - \\
    \end{tabular}
    \\[5pt]
\caption{TPE coefficients appearing in \eqref{eq:QS_TPE_system}, \eqref{eq:const_law_sigma}, and \eqref{eq:relation_coeff}}
\label{tab:TPE_params}
\end{table}
\noindent
Following \cite{Brun2019}, we introduce suitable assumptions on the TPE parameters:
\begin{assumption}[Model coefficients]
    \hspace{0pt}
    \label{assumption:model_problem}
    \begin{enumerate}
    \item the hydraulic mobility $\textup{\textbf{K}}= (K)^d_{i,j=1}$ and heat conductivity $\mathbf{\Theta}=(\Theta)^d_{i,j=1}$ are symmetric tensor fields which, for strictly positive real numbers $k_M>k_m$ and $\theta_M> \theta_m$, satisfy for a.e. $x \in \Omega$ and $\forall\zeta \in \mathbb{R}^d$
    \begin{equation}
        k_m |\zeta|^2 \leq \zeta^T \textup{\textbf{K}}(x)\zeta \leq k_M |\zeta|^2
        \quad \text{and} \quad
        \theta_m |\zeta|^2 \leq \zeta^T \mathbf{\Theta}(x) \zeta \leq \theta_M |\zeta|^2
    \end{equation}
    \item the shear modulus $\mu$ and the fluid heat capacity $c_f$ are scalar fields such that $\mu:\Omega\to[\mu_m,\mu_M]$ and $c_f:\Omega\to[0, c_{fM}]$ with $0<\mu_m\le\mu_M$ and $0\le c_{fM}$;
    \item the constants $\alpha$, $\beta$, $\lambda$ are strictly positive;
    \item the constants $c_0$, $b_0$, $a_0$ are such that $a_0, c_0 \geq b_0\geq 0$.
    \end{enumerate}
\end{assumption}

In what follows, starting from the analysis carried out in \cite{Coussy2003} and taking into account Assumption \ref{assumption:model_problem}, we comment on the relations between the model parameters. First, we point out that the quantity $K=d^{-1}(d \lambda + 2 \mu)$ denotes the bulk modulus of the porous material and that the porosity $\phi\in(0,1)$  represents the percentage of void space in the medium.
The following relations hold:
\begin{equation}
    \label{eq:relation_coeff}
    b_0 = \frac{\beta(\alpha - \phi)}{K}  + \phi a_f, \qquad c_0 = \frac{\alpha - \phi}{K_s} + \frac{\phi}{K_f},\qquad
    \alpha = 1 - \frac{K}{K_s}, 
\end{equation}
where the parameters have the physical meaning summarized in Table~\ref{tab:TPE_params}. Note that from the definition of $\alpha$ we can derive the value of $K_s$ and substitute it into the definition of $c_0$ to get
\[
    K_s = \frac{K}{1 - \alpha} \ \Rightarrow \ c_0 = \frac{(1 - \alpha)(\alpha - \phi)}{K} + \frac{\phi}{K_f}.
\]
Owing to the previous definition of the thermal dilatation coefficient $b_0$ and the storage coefficient $c_0$, it follows from the assumption $c_0\ge b_0$ that
\begin{equation}
    \label{eq:bound_beta}
\beta < 1-\alpha + \gamma_f, \qquad\text{with }\ 
\gamma_f = \frac{K\phi (1-a_f K_f)}{K_f(\alpha-\phi)}.
\end{equation}
Moreover, observing that $K$ and $K_s$ are positive constants satisfying the Hashin--Shtrikman bounds \cite{Hashin1962}, we can obtain a sharper bound for the Biot--Willis coefficient $\alpha$ (see also \cite{Zimmerman2000}), reading
$\phi\le 3 \phi(2 + \phi)^{-1} \leq \alpha \leq 1$.

For the sake of brevity, in what follows we use the symbol $x \lesssim y$ to denote $x < C y$, where $C$ is a positive constant independent of the thermo-poroelastic model's parameters. Without loss of generality and in accordance with the physical interpretation of the model coefficient, we assume that $\gamma_f$ defined in \eqref{eq:bound_beta} satisfies $\gamma_f\lesssim 1$. 

\subsection{Four-field formulation}\label{sec:4field_form}
 
In this section, the four-field formulation of the quasi-static TPE problem \eqref{eq:QS_TPE_system} is presented. The additional scalar equation is meant to ease the stability analysis of the problem without compromising the efficiency of its discretization in terms of computational cost.
Following the idea of \cite{Botti2020, Oyarzua2016}, we introduce the auxiliary variable $\varphi = \lambda \nabla \cdot \mathbf{u} - \alpha p - \beta T$, that represents a volumetric contribution to the total stress. We will refer to this variable as the pseudo-total pressure. 
Then, we can rewrite the divergence of the displacement and the stress tensor as functions of $\varphi$, obtaining the following four-field formulation:
\begin{equation}
    \label{eq:4field_form}
    \left\{
    \begin{aligned}
    &\left(a_0 + \frac{\beta^2}{\lambda}\right)\dot{T} + \left(\frac{\alpha \beta}{\lambda}-b_0\right)\dot{p} + \frac{\beta}{\lambda}\dot{\varphi}  - c_f \nabla T \cdot (\mathbf{K} \nabla p) - \nabla \cdot (\boldsymbol{\Theta} \nabla T ) = H,\\
    & \left(c_0 + \frac{\alpha^2}{\lambda}\right)\dot{p} + \left(\frac{\alpha \beta}{\lambda}-b_0\right)\dot{T} + \frac{\alpha}{\lambda}\dot{\varphi}- \nabla \cdot (\mathbf{K} \nabla p) = g,\\
    &-\nabla \cdot \left(2 \mu \boldsymbol{\epsilon}(\mathbf{u}) + \varphi \mathbf{I}\right) = \mathbf{f},\\
    &\varphi - \lambda \nabla \cdot \mathbf{u} + \alpha p + \beta T = 0 
    \end{aligned}
    \right.
\end{equation}
in $\Omega \times (0,T_f]$. We introduce the functional spaces $\mathbf{V} = \left[H^1_0(\Omega)\right]^d,\ V = H^1_0(\Omega),\ Q = L^2(\Omega)$ and derive in time the last equation in \eqref{eq:4field_form}. Multiplying with appropriate test functions, and summing all the resulting contributions, we obtain the total weak form of \eqref{eq:QS_TPE_system}:
\textit{For any time $t \in (0,T_f]$, find $(\mathbf{u},p,T,\varphi) (t) \in \mathbf{V} \times V \times V \times Q$ such that}
\begin{equation}
    \label{eq:semi_discrete_cont_not_lin}
    \begin{aligned}
    & (b_0 (\dot{p} - \dot{T}),q-S) + ((a_0-b_0)\dot{T},S) + ((c_0-b_0)\dot{p},q) + \frac{1}{\lambda}(\dot{\varphi} + \alpha \dot{p} + \beta \dot{T},\psi + \alpha q + \beta S) \\
    & +
    (\boldsymbol{\Theta} \nabla T,\nabla S) - (c_f \nabla T \cdot (\mathbf{K} \nabla p),S) + (\mathbf{K} \nabla p,\nabla q) +  (2 \mu \boldsymbol{\epsilon}(\mathbf{u}),\boldsymbol{\epsilon}(\mathbf{v}))
    + (\varphi,\nabla \cdot \mathbf{v}) \\
    & - (\nabla \cdot \dot{\mathbf{u}},\psi) = (H,s) + (g,q) + (\mathbf{f},\mathbf{v}) \qquad \forall \ (\mathbf{v},q,S,\psi) \in \mathbf{V} \times V \times V \times Q.
    \end{aligned}
\end{equation}
Problem \eqref{eq:semi_discrete_cont_not_lin} is completed with suitable conditions on the initial pressure field $p(t=0)$, temperature field $T(t=0)$, and divergence of the displacement $\nabla \cdot \mathbf{u}(t=0)$. Indeed, the initial condition on the pseudo-total pressure $\varphi(t=0)$ is inferred from the previous ones according to the fourth equation in \eqref{eq:4field_form}.

\subsection{Linearization and well-posedness}
\label{sec:linearization_wp}

We now propose a linearization procedure to deal with the non-linear convective transport term $\left(c_f \nabla T \cdot (\mathbf{K} \nabla p),S\right)$ in the weak formulation \eqref{eq:semi_discrete_cont_not_lin}. 
Our strategy follows the lines of \cite{Brun2019} where the well-posedness analysis of the thermo-poroelastic non-linear problem is carried out for the fully-mixed variational formulation. 
Denoting by $m \geq 1$ the number of iterations, the fixed-point iterative scheme reads: \textit{for all time $t \in (0,T_f]$, given the temperature gradient at the previous iteration $\nabla T^{m-1}(t)$, find $(\mathbf{u}^m, p^m,T^m,\varphi^m)(t) \in \mathbf{V} \times V \times V \times Q$ such that $ \forall \ (\mathbf{v},q,S,\psi) \in \mathbf{V} \times V \times V \times Q$ it holds:}
\begin{equation}
    \label{eq:nonlin_con_iterative}
    \begin{aligned}
    & b_0(\dot{p}^m - \dot{T}^m,q-S) + ((a_0-b_0)\dot{T}^m,S) + ((c_0-b_0)\dot{p}^m,q) + \frac{1}{\lambda}(\dot{\varphi}^m + \alpha \dot{p}^m + \beta \dot{T}^m,\\
    & \psi + \alpha q + \beta S) +
    (\boldsymbol{\Theta} \nabla T^m,\nabla S) - (c_f \nabla T^{m-1} \cdot (\mathbf{K} \nabla p^m),S) + (\mathbf{K} \nabla p^m,\nabla q) \\
    & +  (2 \mu \boldsymbol{\epsilon}(\mathbf{u}^m),\boldsymbol{\epsilon}(\mathbf{v})) + (\varphi^m,\nabla \cdot \mathbf{v}) - (\nabla \cdot \dot{\mathbf{u}}^m,\psi) = (H,S) + (g,q) + (\mathbf{f},\mathbf{v}),
    \end{aligned}
\end{equation}
together with initial conditions as in \eqref{eq:semi_discrete_cont_not_lin}.
This algorithm must be initialized by an initial guess $\nabla T^0$. Note that, in order to ensure that problem \eqref{eq:nonlin_con_iterative} is well-defined, we have to introduce a regularity hypothesis:
\begin{assumption}
    \label{ass:heat_flux}
    We suppose that $c_f \nabla T^m \in (L^{\infty}(\Omega))^d$ for all $m \geq 0, \ t \in (0,T_f]$. 
\end{assumption}
\begin{remark}
As pointed out in \cite{Brun2019}, the fixed-point iterative procedure and its convergence analysis can also be conducted by linearizing the Darcy flux. In this case we would have \\ $(c_f \nabla T^m \cdot (\mathbf{K} \nabla p^{m-1}),S)$ in \eqref{eq:nonlin_con_iterative} and Assumption~\ref{ass:heat_flux} would be on the regularity of $\nabla p^m$. We additionally observe that Assumption~\ref{ass:heat_flux} can be weakened to $c_f\nabla T^m \in (L^{3}(\Omega))^d$ by using the generalized H\"older and Poincar\'e--Sobolev inequalities to infer the continuity of the convective transport term, i.e.
\begin{equation}
    \begin{aligned}
    (c_f \nabla T^{m-1} \cdot (\mathbf{K} \nabla p^m),S)
    & \ \le ||c_f \nabla T^{m-1}||_{L^{3}(\Omega)^d} 
    ||\mathbf{K} \nabla p^m||_{L^{2}(\Omega)^d}||S||_{L^{6}(\Omega)} \\
    & \ \lesssim k_M ||c_f\nabla T^{m-1}||_{L^{3}(\Omega)^d} ||p^m||_V ||S||_V. 
    \end{aligned}
\end{equation}
However, we prefer to adopt Assumption~\ref{ass:heat_flux} in order to avoid additional technicalities in the numerical analysis of the proposed discretization method. 
\end{remark}

The convergence of the fixed-point iterative scheme \eqref{eq:nonlin_con_iterative} can be obtained by adapting the argument of \cite{Brun2019} (see also \cite{VanDuijn2004} where a similar technique is used for crystal dissolution and precipitation in porous media). First, the well-posedness of a linearized version of the weak formulation \eqref{eq:semi_discrete_cont_not_lin} corresponding to one iteration of \eqref{eq:nonlin_con_iterative} is established. 
The linearized variational formulation reads:
\textit{For any $t \in (0,T_f]$, find $(\mathbf{u},p,T,\varphi)(t) \in \mathbf{V} \times V \times V \times Q$ such that:}
\begin{equation}
    \begin{aligned}
    & b_0(\dot{p} - \dot{T},q-S) + ((a_0-b_0)\dot{T},S) + ((c_0-b_0)\dot{p},q) + \frac{1}{\lambda}(\dot{\varphi} + \alpha \dot{p} + \beta \dot{T},\psi + \alpha q + \beta S) \\
    & + (\boldsymbol{\Theta} \nabla T,\nabla S) - ( \mathbf{K} \nabla p, \boldsymbol{\eta}S) + (\mathbf{K} \nabla p,\nabla q) +  (2 \mu \boldsymbol{\epsilon}(\mathbf{u}),\boldsymbol{\epsilon}(\mathbf{v}))
    + (\varphi,\nabla \cdot \mathbf{v}) - (\nabla \cdot \dot{\mathbf{u}},\psi) \\
    & = (H,s) + (g,q) + (\mathbf{f},\mathbf{v}) \qquad \forall \ (\mathbf{v},q,S,\psi) \in \mathbf{V} \times V \times V \times Q,\\
    \end{aligned}
    \label{eq:semi_discrete_cont}
\end{equation}
where, for some given $\boldsymbol{\eta} \in \left( L^{\infty}(\Omega) \right)^d$, the term $\left(\mathbf{K} \nabla p, \boldsymbol{\eta} S\right)$ replaces the non-linear transport term $\left(c_f \nabla T \cdot (\mathbf{K} \nabla p),S\right)$. Then, the convergence of the iterative procedure \eqref{eq:semi_discrete_cont_not_lin} to the weak solution of the non-linear problem follows by applying the Banach fixed-point Theorem \cite{Evans1998} and the results obtained for the linearized problem \eqref{eq:semi_discrete_cont}.

In what follows, we construct approximate PolyDG solutions to problem \eqref{eq:semi_discrete_cont}, for which we derive suitable a priori estimates. We apply the fixed-point iterative scheme \eqref{eq:nonlin_con_iterative} to the PolyDG discretization of the TPE non-linear problem and assess numerically its convergence performance. The theoretical analysis of the linearization method and the proof of the convergence to the weak solution of problem \eqref{eq:semi_discrete_cont_not_lin} will be the focus of a future work.

\section{Discontinuous Galerkin semi-discrete problem}
\label{sec:DG_form}
To derive the semi-discrete PolyDG approximation of the TPE problem we introduce a polytopic subdivision $\mathcal{T}_h$ of the computational domain $\Omega$. An interface is defined as a planar subset of the intersection of the boundaries of any two neighbouring elements of $\mathcal{T}_h$. We remark that if $d = 2$ an interface is a line segment, while if $d = 3$ an interface is a planar polygon, that can be further decomposed into a set of triangles. We denote with $\mathcal{F}_B$ and $\mathcal{F}_I$ the set of boundary and interior faces, respectively, and we set $\mathcal{F}=\mathcal{F}_B\cup\mathcal{F}_I$.
In what follows, we introduce the main assumptions on the mesh $\mathcal{T}_h$ (cf. \cite{CangianiDong:17, Cangiani2014}).
\begin{definition}[Polytopic regular mesh]
\label{def:unif_regular}
A mesh $\mathcal{T}_h$ is polytopic regular if for any $ \kappa \in \mathcal{T}_h$, there exist a set of non-overlapping simplices contained in $\kappa$, denoted by $\{S_{\kappa}^F\}_{F \subset \partial \kappa}$, such that, for any face $F \subset \partial \kappa$, the following condition holds: $$
h_{\kappa} \lesssim d \ |S_{\kappa}^F| \ |F|^{-1},
$$
with $h_{\kappa}$ denoting the diameter of the element $\kappa$.
\end{definition}

\begin{assumption}
\label{ass:mesh_Th1}
The mesh sequence $\{\mathcal{T}_h\}_h$ satisfies the following properties:
\begin{enumerate}[start=1,label={\bfseries A.\arabic*}]
    \item \label{ass:A1} $\{\mathcal{T}_h\}_h$ is uniformly polytopic-regular;
    \item \label{ass:A2} For each $\mathcal{T}_h\in \{\mathcal{T}_h\}_h$ there exists a shape-regular, simplicial covering $\mathcal{T}_h^*$ of $\mathcal{T}_h$ such that, for each pair $\kappa \in \mathcal{T}_h$ and $k \in \mathcal{T}_h^*$ with $\kappa \subset k$ it holds
    \begin{enumerate}[start=1,label={(\roman*)}]
    \item $h_{k} \lesssim h_{\kappa}$;
    \item $\underset{\kappa \in \mathcal{T}_h}{\mbox{max}} \ \mbox{card} \left\{\kappa' \in \mathcal{T}_h: \kappa' \cap k \neq 0, k \in \mathcal{T}_h^*, \kappa \subset k \right\} \lesssim 1$;
    \end{enumerate}
\end{enumerate}
\end{assumption}
We remark that, under \ref{ass:A1} the following inequality holds \cite{Cangiani2017}:
\begin{equation}
    \label{eq:trace_inverse_ineq}
    ||v||_{L^2(\partial \kappa)} \lesssim \frac{\ell}{h_{\kappa}^{1/2}} ||v||_{L^2(\kappa)} \quad \forall v \in \mathbb{P}^{\ell}(\kappa),
\end{equation}
where $\mathbb{P}^{\ell}(k)$ is the space of polynomials of degree less than or equal to $\ell$ in $\kappa$ and the hidden constant is independent of $\ell, h_{\kappa}$, and of the number of faces per element. We refer to \eqref{eq:trace_inverse_ineq} as discrete trace-inverse inequality.
Then, we introduce the average and jump operators on each interior face $F\in\mathcal{F}_I$ shared by the elements $\kappa^{\pm}$ as in \cite{Arnold2002}:
\begin{equation}
    \label{eq:avg_jump_operators}
    \begin{aligned}
    & \jump{a} = a^+ \mathbf{n^+} + a^- \mathbf{n^-}, \ 
    && \jump{\mathbf{a}} = \mathbf{a}^+ \odot \mathbf{n^+} + \mathbf{a}^- \odot \mathbf{n^-}, \ 
    &&\jump{\mathbf{a}}_n = \mathbf{a}^+ \cdot \mathbf{n^+} + \mathbf{a}^- \cdot \mathbf{n^-}, \\ 
    & \avg{a} = \frac{a^+ + a^-}{2}, \
    && \avg{\mathbf{a}} = \frac{\mathbf{a}^+ + \mathbf{a}^-}{2}, \ && \avg{\mathbf{A}} = \frac{\mathbf{A}^+ + \mathbf{A}^-}{2},
    \end{aligned}
\end{equation}
where $\mathbf{a} \odot \mathbf{n} = \mathbf{a}\mathbf{n}^T$, and $a, \ \mathbf{a}, \ \mathbf{A}$ are scalar-, vector-, and tensor-valued functions, respectively. The notation $(\cdot)^{\pm}$ is used for the trace on $F$ taken within $\kappa^\pm$ and $\mathbf{n}^\pm$ is the outer normal vector to $\partial \kappa^\pm$. Accordingly, on boundary faces $F\in\mathcal{F}_B$, we set
$
 \jump{a} = a \mathbf{n},\
\avg{a} = a,\
\jump{\mathbf{a}} = \mathbf{a} \odot \mathbf{n},\
\avg{\mathbf{a}} = \mathbf{a},\\
\jump{\mathbf{a}}_n = \mathbf{a} \cdot \mathbf{n},\
\avg{\mathbf{A}} = \mathbf{A}.$ 
For the sake of simplicity, we assume that the parameters $\boldsymbol{\Theta}, \mathbf{K}$, and $\mu$ are element-wise constant. Then, we can introduce the quantities
$
    \overline{\Theta}_{\kappa} = |\sqrt{\boldsymbol{\Theta}\rvert_{\kappa}}|_2^2, 
    \, \overline{K}_{\kappa} = |\sqrt{\mathbf{K}\rvert_{\kappa}}|_2^2, 
    \,\text{and } \\ \mu_{\kappa} = \mu \rvert_{\kappa},
$
where $|\cdot|_2$ denotes the $\ell^2$-norm in $\mathbb{R}^{d \times d}$. 

We now proceed deriving the semi-discrete formulation of problem \eqref{eq:4field_form}. First, for $m,\ell \geq 1$ we introduce the discrete spaces
\begin{equation}
    \begin{aligned}
    Q_h^{m} &= \left\{ v_h \in L^2(\Omega) : v_h |_{\kappa} \in \mathbb{P}^{m}(\kappa) \ \ \forall \kappa \in \mathcal{T}_h \right\},\\
    V_h^{\ell} &= \left\{ v_h \in L^2(\Omega) : v_h |_{\kappa} \in \mathbb{P}^{\ell}(\kappa) \ \ \forall \kappa \in \mathcal{T}_h \right\},\\
    \mathbf{V}_h^{\ell} &= \left\{ \mathbf{v}_h \in \mathbf{L}^2(\Omega) : \mathbf{v}_h |_{\kappa} \in     \left[\mathbb{P}^{\ell}(\kappa)\right]^d \ \ \forall \kappa \in \mathcal{T}_h \right\}.\\
    \end{aligned}
\end{equation}
In the following discussion, we focus on a Symmetric Interior Penalty formulation \cite{Arnold1982, Ern2021, Wheeler1978}. Thus, the PolyDG semi-discretization of problem \eqref{eq:semi_discrete_cont_not_lin} reads:
\textit{For any $t \in (0,T_f]$, find $(\mathbf{u}_h,p_h,T_h,\varphi_h)(t) \\ \in \mathbf{V}_h^{\ell} \times V_h^{\ell} \times V_h^{\ell} \times Q_h^m$ such that}
$$
    \begin{aligned}
    & b_0(\dot{p}_h - \dot{T}_h,q_h-S_h) + ((a_0-b_0)\dot{T}_h,S_h) + ((c_0-b_0)\dot{p}_h,q_h) + \frac{1}{\lambda}(\dot{\varphi}_h + \alpha \dot{p}_h + \beta \dot{T}_h,\\
    & \psi_h + \alpha q_h + \beta S_h)
    + \mathcal{A}_h^{T}(T_h,S_h) + \widetilde{\mathcal{C}}_h(T_h,p_h,S_h)
    + \mathcal{A}_h^{p}(p_h,q_h) + \mathcal{A}_h^{e}(\mathbf{u}_h,\mathbf{v}_h) \\
    & - \mathcal{B}_h(\varphi_h,\mathbf{v}_h) +  \mathcal{B}_h(\psi_h,\dot{\mathbf{u}}_h) +  \mathcal{D}_h(\dot{\varphi_h},\psi_h)
    = (H,S_h) + (g,q_h) + (\mathbf{f},\mathbf{v}_h) 
    \end{aligned}
$$
$\forall (\mathbf{v}_h,q_h,S_h,\varphi_h) \in \mathbf{V}_h^{\ell} \times V_h^{\ell} \times V_h^{\ell} \times Q_h^m$, with initial conditions $(\mathbf{u}_{h,0},p_{h,0},T_{h,0},\varphi_{h,0})$ that are suitable approximation of the initial data of the model problem \eqref{eq:QS_TPE_system} and bilinear/trilinear forms defined by
\begin{equation}
    \label{eq:bilinear_forms_discr}
    \begin{aligned}
    & \begin{aligned}
    \mathcal{A}_h^T(T,S) = &\left(\boldsymbol{\Theta}\nabla_h T, \nabla_h S\right) -  \hspace{-0.2cm} \sum_{F \in \mathcal{F}} \int_F  \hspace{-0.2cm} \left(\avg{\boldsymbol{\Theta}\nabla_h T} \mkern-2.5mu \cdot \mkern-2.5mu \jump{S} + \jump{T} \mkern-2.5mu \cdot \mkern-2.5mu \avg{\boldsymbol{\Theta}\nabla_h S} + \sigma \jump{T} \mkern-2.5mu \cdot \mkern-2.5mu \jump{S} \right),
    \end{aligned}\\
    & \begin{aligned}
    \mathcal{A}_h^p(p,q) = (\mathbf{K} \nabla_h p,\nabla_h q) - \hspace{-0.2cm} \sum_{F \in \mathcal{F}} \int_F \hspace{-0.2cm} \left(\avg{\mathbf{K} \nabla_h p} \mkern-2.5mu \cdot \mkern-2.5mu \jump{q} + \jump{p} \mkern-2.5mu \cdot \mkern-2.5mu \avg{\mathbf{K} \nabla_h q} + \xi \jump{p} \mkern-2.5mu \cdot \mkern-2.5mu \jump{q} \right),
    \end{aligned}\\
    & \begin{aligned}
    \mathcal{A}_h^e(\mathbf{u},\mathbf{v}) = & (2 \mu\boldsymbol{\epsilon}_h(\mathbf{u}),\boldsymbol{\epsilon}_h(\mathbf{v})) -  \sum_{F \in \mathcal{F}} \int_F \bigg( \avg{2 \mu\boldsymbol{\epsilon}_h(\mathbf{u})} \mkern-2.5mu : \mkern-2.5mu \jump{\mathbf{v}} + \jump{\mathbf{u}} \mkern-2.5mu : \mkern-2.5mu \avg{2 \mu\boldsymbol{\epsilon}_h(\mathbf{v})} \\
    & + \zeta \jump{\mathbf{u}} \mkern-2.5mu : \mkern-2.5mu \jump{\mathbf{v}} \bigg),
    \end{aligned} \\
    & \mathcal{B}_h(\varphi,\mathbf{v}) = - (\varphi,\nabla_h \cdot \mathbf{v}) + \sum_{F \in \mathcal{F}} \int_F \avg{ \varphi} \mkern-2.5mu \cdot \mkern-2.5mu \jump{\mathbf{v}}_n,\\
    & \widetilde{\mathcal{C}}_h (T,p,S) = - (c_f \nabla_h T \cdot \left(\mathbf{K} \ \nabla_h p\right),S),\\
    & \mathcal{D}_h(\varphi,\psi) = \sum_{F \in \mathcal{F}_I} \int_F \varrho \jump{\varphi} \mkern-2.5mu \cdot \mkern-2.5mu \jump{\psi}.\\
    \end{aligned}
\end{equation}
Here, for all $w\in V_h^{\ell}$ and $\mathbf{w}\in \mathbf{V}_h^{\ell}$, $\nabla_h w$ and $\nabla_h \cdot \mathbf{w}$ denote the broken differential operators whose restrictions to each element $k \in \mathcal{T}_h$ are defined as $\nabla w_{|k}$ and $\nabla \cdot w_{|k}$, respectively, and $\boldsymbol{\epsilon}_h(\mathbf{u}) = \left(\nabla_h \mathbf{u} + \nabla_h \mathbf{u}^T\right)/2$. The stabilization functions $\sigma, \xi, \zeta, \varrho \in L^{\infty}(\mathcal{F}_h)$ are defined according to \cite{Cangiani2017}:
\begin{equation}
    \label{eq:stabilization_func}
    \begin{aligned}
    \sigma &= \left\{\begin{aligned}
    &\alpha_1 \underset{\kappa \in \{\kappa^+,\kappa^-\}}{\mbox{max}}\left( \frac{\overline{\Theta}_{\kappa} \ell^2}{h_{\kappa}}\right) \ & F \in \mathcal{F}_I,\\
    &\alpha_1 \overline{\Theta}_{\kappa} \ell^2 h_{\kappa}^{-1} \ & F \in \mathcal{F}_B,\\
    \end{aligned}
    \right.
    \ \
     \xi = &&\left\{\begin{aligned}
    &\alpha_2 \underset{\kappa \in \{\kappa^+,\kappa^-\}}{\mbox{max}}\left( \frac{\overline{K}_{\kappa} \ell^2}{h_{\kappa}}\right) \ & F \in \mathcal{F}_I,\\
    &\alpha_2 \overline{K}_{\kappa} \ell^2 h_{\kappa}^{-1} \ & F \in \mathcal{F}_B,\\
    \end{aligned}
    \right.\\
    \zeta &= \left\{\begin{aligned}
    &\alpha_3 \underset{\kappa \in \{\kappa^+,\kappa^-\}}{\mbox{max}}\left( \frac{\mu_{\kappa} \ell^2}{h_{\kappa}}\right) \ & F \in \mathcal{F}_I,\\
    &\alpha_3 \mu_{\kappa} \ell^2 h_{\kappa}^{-1} \ & F \in \mathcal{F}_B,\\
    \end{aligned}
    \right.
    \ \
    \varrho = &&\left\{\begin{aligned}
    &\alpha_4 \underset{\kappa \in \{\kappa^+,\kappa^-\}}{\mbox{min}}\left(\frac{h_{\kappa}}{m}\right) \ & F \in \mathcal{F}_I,\\
    &\alpha_4 h_{\kappa} m^{-1} \ & F \in \mathcal{F}_B,\\
    \end{aligned}
    \right.\\
    \end{aligned}   
\end{equation}
where $\alpha_1, \alpha_2, \alpha_3, \alpha_4 \in \mathbb{R}$ are positive constants to be properly defined. We point out that in the formulation above, we have decided to consider the same polynomial degree for the spaces $V_h^{\ell}$ and $\mathbf{V}_h^{\ell}$, because we are mainly interested in approximation schemes yielding the same accuracy for the pore pressure, temperature, and displacement.

Finally, as done for the continuous case in Section~\ref{sec:linearization_wp}, we introduce the linearized version of the PolyDG formulation consisting in replacing the non-linear convective term $\widetilde{\mathcal{C}}_h(T_h,p_h,S_h)$ by the bilinear form $$\mathcal{C}_h(p_h, S_h) = -(\mathbf{K}\ \nabla_h p_h, \ \boldsymbol{\eta} S_h),$$ 
defined for a given vector field $\boldsymbol{\eta} \in \left( L^{\infty}(\Omega) \right)^d$. 
In order to ease the notation, we introduce $X_h = (\mathbf{u}_h,p_h,T_h,\varphi_h), Y_h = (\mathbf{v}_h,q_h,S_h,\psi_h) \in \mathbf{X}_h = \mathbf{V}_h^{\ell} \times V_h^{\ell} \times V_h^{\ell} \times Q_h^m$ and we write the linearized semi-discrete variational formulation as:
\textit{For any time $t \in (0,T_f]$, find $X_h \in\mathbf{X}_h$ such that:}
\begin{equation}
    \label{eq:discrete_weak_form2}
    \mathcal{M}_h(\dot{X}_h,Y_h) + \mathcal{A}_h(X_h,Y_h) - \mathcal{B}_h(\varphi_h,\mathbf{v}_h) + \mathcal{B}_h(\psi_h,\dot{\mathbf{u}}_h) = F(Y_h) \qquad \forall \ Y_h \in \mathbf{X}_h,
\end{equation}
where the bilinear form $\mathcal{M}_h, \mathcal{A}_h :\mathbf{X}_h\times\mathbf{X}_h\to\mathbb{R}$ are defined such that
\begin{equation}
    \label{eq:LHS_wp}
    \begin{aligned}
    \mathcal{M}_{h}(X_h,Y_h) &=  b_0(p_h - T_h,q_h-S_h) + ((a_0-b_0)T_h,S_h) + ((c_0-b_0)p_h,q_h)  \\ 
    &+ \lambda^{-1}(\varphi_h + \alpha p_h + \beta T_h, \psi_h + \alpha q_h + \beta S_h) + \mathcal{D}_h(\varphi_h,\psi_h), \\
    \mathcal{A}_{h}(X_h,Y_h) &= \mathcal{A}_h^T(T_h,S_h) + \mathcal{C}_h(p_h,S_h) + \mathcal{A}_h^p(p_h,q_h) + \mathcal{A}_h^e (\mathbf{u}_h,\mathbf{v}_h);
    \end{aligned}
\end{equation}
and the expression of the linear functional in the right-hand side of \eqref{eq:discrete_weak_form2} is given by \\ $F(Y_h)= (H,S_h) + (g,q_h) + (\mathbf{f},\mathbf{v}_h).$
\begin{remark}
Note that, following the DG discretization of the Stokes problem analyzed in \cite{antonietti2020_stokesDG}, in the semi-discrete formulation \eqref{eq:discrete_weak_form2}-\eqref{eq:LHS_wp} we have added an additional weakly consistent stabilization term for the pseudo-total pressure.
\end{remark}

\section{Stability analysis}
\label{sec:wp_semidiscr}
The aim of this Section and Section~\ref{sec:error_est} is to perform a complete numerical analysis of the linearized PolyDG semi-discretization \eqref{eq:discrete_weak_form2}. Before establishing an a priori estimate we define the energy norms and present some preliminary results.
For carrying out our analysis we first define, for an integer $l \geq 1$, the broken Sobolev spaces
\begin{equation}
   \begin{aligned}
   H^{l}(\mathcal{T}_h) & \ = \left\{ v_h \in L^2(\Omega) : v_h |_{\kappa} \in H^{l}(\kappa) \ \ \forall \kappa \in \mathcal{T}_h \right\}, \\
   \mathbf{H}^{l}(\mathcal{T}_h) & \ = \left\{ \mathbf{v}_h \in \mathbf{L}^2(\Omega) : \mathbf{v}_h |_{\kappa} \in \mathbf{H}^{l}(\kappa) \ \ \forall \kappa \in \mathcal{T}_h \right\}, 
   \end{aligned}
\end{equation}
and we introduce the shorthand notation $||\cdot||=||\cdot||_\Omega$ and $||\cdot||_{\mathcal{F}}=\left(\sum_{F\in\mathcal{F}}||\cdot||_F^2\right)^{\frac12}$.
The $DG$-norms that will be used in the analysis are defined such that
\begin{equation}
    \label{eq:DG_norms}
    \begin{aligned}
    &||S||^2_{DG,T} = ||\sqrt{\boldsymbol{\Theta}} \ \nabla_h S||^2 + ||\sqrt{\sigma} \jump{S}||_{\mathcal{F}}^2 \ &&  \forall \ S \in V_h^{\ell},\\ 
    &||q||^2_{DG,p} = ||\sqrt{\mathbf{K}} \ \nabla_h q||^2 + ||\sqrt{\xi} \jump{q}||_{\mathcal{F}}^2  \quad && \forall \ q \in V_h^{\ell},\\ 
    &||\mathbf{v}||^2_{DG,e} = ||\sqrt{2 \mu} \ \boldsymbol{\epsilon}_h(\mathbf{v})||^2 + ||\sqrt{\zeta} \jump{\mathbf{v}}||_{\mathcal{F}}^2 \ && \forall \ \mathbf{v} \in \mathbf{V}_h^{\ell}.
    \end{aligned}
\end{equation}
We can now state the boundedness and coercivity of the bilinear forms in \eqref{eq:bilinear_forms_discr}. Since the proof hinges on standard arguments on DG discretizations, we  refer the reader to \cite[Section 3]{AntoniettiMazzieri2018} for all the details. 
\begin{lemma}
\label{lem:boundcoerc_bil_forms}
Let Assumption~\ref{assumption:model_problem} and    Assumption~\ref{ass:mesh_Th1} be satisfied. Assume that the parameters $\alpha_1$, $\alpha_2$, and $\alpha_3$ appearing in \eqref{eq:stabilization_func} are chosen sufficiently large. Then
\begin{equation}
    \begin{aligned}
    \mathcal{A}_h^T(T,S) \lesssim \ & ||T||_{DG,T} ||S||_{DG,T}, \ && \mathcal{A}_h^T(T,T) \gtrsim ||T||_{DG,T}^2 \ &&\forall \ T,S \in V_h^{\ell},\\
    \mathcal{A}_h^p(p,q) \lesssim \ & ||p||_{DG,p} ||q||_{DG,p}, \ && \mathcal{A}_h^p(p,p) \gtrsim ||p||_{DG,p}^2 \ &&\forall \ p,q \in V_h^{\ell},\\
    \mathcal{A}_h^e(\mathbf{u},\mathbf{v}) \lesssim \ & ||\mathbf{u}||_{DG,e} ||\mathbf{v}||_{DG,e}, \ && \mathcal{A}_h^e(\mathbf{u},\mathbf{u}) \gtrsim ||\mathbf{u}||_{DG,e}^2 \ &&\forall \ \mathbf{u},\mathbf{v} \in \mathbf{V}_h^{\ell},\\
    \end{aligned}
\end{equation}
\end{lemma}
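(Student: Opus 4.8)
The plan is to handle the three forms $\mathcal{A}_h^T$, $\mathcal{A}_h^p$, $\mathcal{A}_h^e$ by a single argument, since each is the symmetric interior penalty form of a second-order elliptic operator whose diffusion coefficient ($\boldsymbol{\Theta}$, $\mathbf{K}$, or $2\mu\mathbf{I}$ paired with $\boldsymbol{\epsilon}_h$) is symmetric, uniformly positive definite by Assumption~\ref{assumption:model_problem}, and element-wise constant. I would carry out the details for $\mathcal{A}_h^T$ and then recover the other two by relabelling $(\boldsymbol{\Theta},\sigma,\nabla_h)$ as $(\mathbf{K},\xi,\nabla_h)$ and as $(2\mu,\zeta,\boldsymbol{\epsilon}_h)$; for $\mathcal{A}_h^e$ no broken Korn inequality is needed because $\|\cdot\|_{DG,e}$ is defined directly through $\boldsymbol{\epsilon}_h$.

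For boundedness, Cauchy--Schwarz immediately bounds the volume term $(\boldsymbol{\Theta}\nabla_h T,\nabla_h S)$ and the penalty term $\sum_{F}\int_F\sigma\jump{T}\cdot\jump{S}$ by $\|T\|_{DG,T}\|S\|_{DG,T}$. The two consistency terms, say $\sum_F\int_F\avg{\boldsymbol{\Theta}\nabla_h T}\cdot\jump{S}$, I would estimate on each face by $\|\sigma^{-1/2}\avg{\boldsymbol{\Theta}\nabla_h T}\|_{L^2(F)}\,\|\sqrt{\sigma}\jump{S}\|_{L^2(F)}$ and then invoke the discrete trace--inverse inequality \eqref{eq:trace_inverse_ineq} on each $\kappa\supset F$ --- legitimate since $\boldsymbol{\Theta}$ is piecewise constant, so $\boldsymbol{\Theta}\nabla_h T|_\kappa\in[\mathbb{P}^{\ell-1}(\kappa)]^d$ --- to get $\|\boldsymbol{\Theta}\nabla_h T\|_{L^2(\partial\kappa)}^2\lesssim\overline{\Theta}_\kappa\ell^2 h_\kappa^{-1}\|\sqrt{\boldsymbol{\Theta}}\nabla_h T\|_{L^2(\kappa)}^2$. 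The weights in the definition \eqref{eq:stabilization_func} of $\sigma$ are designed precisely so that $\sigma^{-1}$ cancels the factor $\overline{\Theta}_\kappa\ell^2 h_\kappa^{-1}$ on both interior and boundary faces, leaving $\|\sigma^{-1/2}\avg{\boldsymbol{\Theta}\nabla_h T}\|_{\mathcal{F}}^2\lesssim\alpha_1^{-1}\|\sqrt{\boldsymbol{\Theta}}\nabla_h T\|^2$ with constants independent of $h_\kappa$, $\ell$, and the number of faces of $\kappa$ by Assumption~\ref{ass:mesh_Th1}; a Cauchy--Schwarz in the face index then closes the estimate.

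For coercivity I would take $S=T$, so that $\mathcal{A}_h^T(T,T)=\|\sqrt{\boldsymbol{\Theta}}\nabla_h T\|^2+\|\sqrt{\sigma}\jump{T}\|_{\mathcal{F}}^2-2\sum_F\int_F\avg{\boldsymbol{\Theta}\nabla_h T}\cdot\jump{T}$, and absorb the cross term by Young's inequality: for any $\delta>1$, $2\sum_F\int_F\avg{\boldsymbol{\Theta}\nabla_h T}\cdot\jump{T}\le\delta\|\sigma^{-1/2}\avg{\boldsymbol{\Theta}\nabla_h T}\|_{\mathcal{F}}^2+\delta^{-1}\|\sqrt{\sigma}\jump{T}\|_{\mathcal{F}}^2\le C_{\mathrm{tr}}\delta\alpha_1^{-1}\|\sqrt{\boldsymbol{\Theta}}\nabla_h T\|^2+\delta^{-1}\|\sqrt{\sigma}\jump{T}\|_{\mathcal{F}}^2$, using the trace bound just derived. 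Fixing $\delta>1$ and then requiring $\alpha_1$ large enough that $C_{\mathrm{tr}}\delta\alpha_1^{-1}<1$ leaves strictly positive coefficients in front of both $\|\sqrt{\boldsymbol{\Theta}}\nabla_h T\|^2$ and $\|\sqrt{\sigma}\jump{T}\|_{\mathcal{F}}^2$, i.e.\ $\mathcal{A}_h^T(T,T)\gtrsim\|T\|_{DG,T}^2$; the analogous lower thresholds on $\alpha_2$ and $\alpha_3$ give the claims for $\mathcal{A}_h^p$ and $\mathcal{A}_h^e$.

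I expect the main obstacle to be the polytopic bookkeeping rather than any new idea: ensuring that the discrete trace--inverse inequality \eqref{eq:trace_inverse_ineq} holds with a constant independent of the number of faces per element --- which is exactly the role of the polytopic-regularity condition in Definition~\ref{def:unif_regular} and Assumption~\ref{ass:A1} --- and checking that the $h_\kappa$- and $\ell$-scalings built into \eqref{eq:stabilization_func} compensate it identically on interior and boundary faces. Since this computation is carried out in full for the same family of forms in \cite[Section~3]{AntoniettiMazzieri2018}, I would reference it for the remaining routine details.
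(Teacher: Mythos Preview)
Your proposal is correct and follows exactly the standard symmetric interior penalty argument that the paper invokes: the paper itself omits the proof entirely, stating only that it ``hinges on standard arguments on DG discretizations'' and referring to \cite[Section~3]{AntoniettiMazzieri2018}, which is precisely the reference you reach at the end. Your outline (Cauchy--Schwarz on volume and penalty terms, trace--inverse inequality \eqref{eq:trace_inverse_ineq} to control $\|\sigma^{-1/2}\avg{\boldsymbol{\Theta}\nabla_h T}\|_{\mathcal{F}}$, Young's inequality and large $\alpha_i$ for coercivity) is the canonical argument in that reference, so there is nothing to add.
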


The next Proposition establishes the positivity of the bilinear forms $\mathcal{M}_h$ and $\mathcal{A}_h$ in \eqref{eq:discrete_weak_form2} and the inf-sup stability of the hydro-mechanical coupling given by $\mathcal{B}_h$.
\begin{proposition}
  \label{prop:pos}
Let Assumption~\ref{assumption:model_problem} and Assumption~\ref{ass:mesh_Th1} hold and assume that the parameters $\alpha_1$, $\alpha_2$, $\alpha_3$, and $\alpha_4$ appearing in \eqref{eq:stabilization_func} are large enough. Then
\begin{enumerate}
    \item[(i)] for all $Y_h = (\mathbf{v}_h, q_h, S_h, \psi_h)\in \mathbf{X}_h$ it holds
    \begin{equation}
        \label{eq:positivity_Mh}
        \mathcal{M}_h(Y_h, Y_h) \gtrsim 
        ||\sqrt{a_0-b_0}\, S_h ||^2 + ||\sqrt{c_0-b_0}\, q_h ||^2 + ||\sqrt{d_0}\, \psi_h ||^2
        +\mathcal{D}_h(\psi_h, \psi_h),
    \end{equation}
    with $d_0 = (1+\gamma_f-\alpha-\beta)(\alpha-\phi)K^{-1}$ and $\gamma_f$ defined in \eqref{eq:bound_beta};
    \vspace{1mm}
    \item[(ii)] under the additional requirement $||\boldsymbol{\eta}||_{L^\infty(\Omega)^d}\lesssim \sqrt{ \theta_m k_M^{-1}}$,  for all $Y_h\in \mathbf{X}_h$ 
    \begin{equation}
        \label{eq:positivity_Ah}
        \mathcal{A}_h(Y_h, Y_h) \gtrsim 
        ||S_h||^2_{DG,T} + ||q_h||^2_{DG,p}+||\mathbf{v}_h||^2_{DG,e};
    \end{equation}
    \item[(iii)] assuming that the polynomial degrees $\ell$ and $m$  satisfy $\ell+1 \geq m$, the bound 
\begin{equation}
    \label{eq:gen_inf_sup}
    \underset{\mathbf{0} \neq \mathbf{v}_h \in \mathbf{V}^{\ell}_h}{\mbox{sup}} \frac{\mathcal{B}_h(\mathbf{v}_h, \varphi_h)}{||\mathbf{v}_h||_{DG,e}} + \mathcal{D}_h(\varphi_h,\varphi_h)^{\frac12} \geq \mathbb{B} ||\varphi_h|| \qquad \forall \varphi_h \in Q_h^m
\end{equation}
is valid with $\mathbb{B}>0$ depending on $\ell$ and $m$ but independent of the mesh size $h$.
\end{enumerate}
\end{proposition}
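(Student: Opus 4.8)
The plan is to establish the three items separately, since each relies on a different mechanism.

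For item (i), I would expand $\mathcal{M}_h(Y_h,Y_h)$ using the definition in \eqref{eq:LHS_wp}. The terms $((a_0-b_0)S_h,S_h)$, $((c_0-b_0)q_h,q_h)$, and $\mathcal{D}_h(\psi_h,\psi_h)$ are already nonnegative and produce the corresponding squared norms on the right-hand side. The remaining contributions are the cross term $b_0(q_h-S_h,q_h-S_h)=b_0\|q_h-S_h\|^2\ge 0$, which we simply discard, and the quadratic form $\lambda^{-1}\|\psi_h+\alpha q_h+\beta S_h\|^2$. The work is to extract from this last term (together with the leftover parts of the other three squares, if needed) a positive multiple of $\|\psi_h\|^2$ with the stated constant $d_0$. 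Here I would use the relations in Section~\ref{subsubsec:relation_coeff}: writing $c_0-b_0$ and $a_0-b_0$ in terms of $K$, $\phi$, $a_f$, $\alpha$, $\beta$ via \eqref{eq:relation_coeff} and the derived identity $c_0=(1-\alpha)(\alpha-\phi)K^{-1}+\phi K_f^{-1}$, a careful bookkeeping of the coefficients of $\psi_h^2$, $q_h^2$, $S_h^2$, $\psi_hq_h$, $\psi_hS_h$, $q_hS_h$ in $\mathcal{M}_h(Y_h,Y_h)$ should reveal that the whole quadratic form dominates $d_0\|\psi_h\|^2$ after completing the square in $\psi_h$, with $d_0=(1+\gamma_f-\alpha-\beta)(\alpha-\phi)K^{-1}$; positivity of $d_0$ is exactly the content of the bound \eqref{eq:bound_beta} ($\beta<1-\alpha+\gamma_f$). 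This algebraic reduction is the main obstacle in part (i).

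For item (ii), I would write $\mathcal{A}_h(Y_h,Y_h)=\mathcal{A}_h^T(S_h,S_h)+\mathcal{A}_h^p(q_h,q_h)+\mathcal{A}_h^e(\mathbf{v}_h,\mathbf{v}_h)+\mathcal{C}_h(q_h,S_h)$ and invoke Lemma~\ref{lem:boundcoerc_bil_forms} to bound the first three terms below by $c(\|S_h\|_{DG,T}^2+\|q_h\|_{DG,p}^2+\|\mathbf{v}_h\|_{DG,e}^2)$. It then remains to absorb the indefinite term $\mathcal{C}_h(q_h,S_h)=-(\mathbf{K}\nabla_h q_h,\boldsymbol{\eta}S_h)$. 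Using Cauchy--Schwarz and Assumption~\ref{assumption:model_problem}(1), $|\mathcal{C}_h(q_h,S_h)|\le\|\boldsymbol{\eta}\|_{L^\infty(\Omega)^d}\sqrt{k_M}\,\|\sqrt{\mathbf{K}}\nabla_h q_h\|\,\|S_h\|$, and a Poincaré inequality on broken spaces (which controls $\|S_h\|$ by $\|S_h\|_{DG,T}$ up to $\theta_m^{-1/2}$) gives $|\mathcal{C}_h(q_h,S_h)|\lesssim\|\boldsymbol{\eta}\|_{L^\infty(\Omega)^d}\sqrt{k_M/\theta_m}\,\|q_h\|_{DG,p}\|S_h\|_{DG,T}$. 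Under the smallness hypothesis $\|\boldsymbol{\eta}\|_{L^\infty(\Omega)^d}\lesssim\sqrt{\theta_m/k_M}$, this is bounded by a small multiple of $\|q_h\|_{DG,p}^2+\|S_h\|_{DG,T}^2$, which is absorbed into the coercivity of $\mathcal{A}_h^p$ and $\mathcal{A}_h^T$; choosing the implicit constant in the hypothesis small enough yields \eqref{eq:positivity_Ah}.

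For item (iii), the target is a discrete inf-sup condition for $\mathcal{B}_h$ on $\mathbf{V}_h^\ell\times Q_h^m$, augmented by the jump-stabilization $\mathcal{D}_h$. I would follow the standard Stokes-type argument used in \cite{antonietti2020_stokesDG}: given $\varphi_h\in Q_h^m$, split it as $\varphi_h=\bar\varphi_h+\varphi_h^0$, where $\bar\varphi_h$ is (roughly) the piecewise constant part and $\varphi_h^0$ the higher-order part; for $\bar\varphi_h$ use a continuous Stokes inf-sup/Fortin argument (lifting through a polynomial of degree $\ell+1\ge m$, hence the hypothesis $\ell+1\ge m$) together with the fact that $\mathcal{B}_h$ reduces, after integration by parts of $-(\varphi_h,\nabla_h\cdot\mathbf{v}_h)$ and the face terms, to a form amenable to this; for $\varphi_h^0$, control it directly by the jump term $\mathcal{D}_h(\varphi_h,\varphi_h)^{1/2}$ using an inverse inequality relating interior jumps of a discontinuous polynomial to its local oscillation (and the scaling $\varrho\sim h_\kappa/m$ in \eqref{eq:stabilization_func}). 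Combining the two bounds with a triangle inequality yields \eqref{eq:gen_inf_sup} with $\mathbb{B}$ depending on $\ell,m$ but not on $h$. The main obstacle here is constructing the Fortin-type operator/velocity field $\mathbf{v}_h$ realizing the supremum on polytopic meshes while keeping $\|\mathbf{v}_h\|_{DG,e}$ under control; this is where Assumption~\ref{ass:mesh_Th1} and the trace-inverse inequality \eqref{eq:trace_inverse_ineq} enter, and it is essentially the technical heart of the proposition.
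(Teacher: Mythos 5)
Your item (ii) follows the paper's argument essentially verbatim (Cauchy--Schwarz/Young plus the discrete Poincar\'e inequality to absorb $\mathcal{C}_h$ into the coercivity from Lemma~\ref{lem:boundcoerc_bil_forms} under the smallness of $\|\boldsymbol{\eta}\|_{L^\infty(\Omega)^d}$), and is fine. For item (iii) the paper does not reprove anything: it invokes \cite[Proposition 3.1]{antonietti2020_stokesDG}, whose proof rests on the trace-inverse inequality \eqref{eq:trace_inverse_ineq} and on the fact that $\nabla_h\varphi_h\in\mathbf{V}_h^{\ell}$ for $\varphi_h\in Q_h^m$ (this is where $m\le\ell+1$ enters); your Fortin-type splitting into a low-order part handled by a Stokes inf-sup lifting and a fluctuation controlled by $\mathcal{D}_h$ is a plausible alternative route, but as written it is only an outline whose technical core (the construction of $\mathbf{v}_h$ on polytopic meshes) is deferred, whereas the paper's citation settles it.

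The genuine gap is in item (i): you discard the term $b_0\|q_h-S_h\|^2$, and without it the bound \eqref{eq:positivity_Mh} with a constant independent of the model parameters is simply false, so no amount of ``careful bookkeeping'' of the retained terms can close the argument. Concretely, Assumption~\ref{assumption:model_problem} allows $a_0=b_0>0$ while $d_0=c_0-b_0>0$; take $q_h=0$, $S_h\equiv 1$, $\psi_h\equiv-\beta$. Then $\psi_h+\alpha q_h+\beta S_h=0$, $(a_0-b_0)\|S_h\|^2=0$, $(c_0-b_0)\|q_h\|^2=0$, and $\mathcal{D}_h(\psi_h,\psi_h)=0$ since $\psi_h$ is constant, so every term you kept vanishes, yet $\|\sqrt{d_0}\,\psi_h\|^2=d_0\beta^2|\Omega|>0$. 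The inequality survives only because the discarded term equals $b_0|\Omega|$, and $b_0\ge\beta(\alpha-\phi)K^{-1}$ by \eqref{eq:relation_coeff} dominates $d_0\beta^2$ up to a parameter-free constant. This is exactly how the paper proceeds: it keeps $b_0\|S_h-q_h\|^2$, lowers $\lambda^{-1}$ to $K^{-1}$ (since $K\ge\lambda$) and $b_0$ to $\beta(\alpha-\phi)K^{-1}$, and then writes $\sqrt{d_0}\,\psi_h$ as a linear combination, with coefficients $\lesssim 1$ thanks to \eqref{eq:bound_beta}, of $K^{-1/2}(\psi_h+\alpha q_h+\beta S_h)$, $K^{-1/2}\sqrt{\beta(\alpha-\phi)}\,(S_h-q_h)$, and $\sqrt{c_0-b_0}\,q_h$ (note $c_0-b_0=d_0$ by \eqref{eq:relation_coeff}), concluding with the triangle inequality. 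The $(S_h-q_h)$ ingredient --- precisely the square you threw away --- is what cancels the $\beta S_h$ contribution when $a_0-b_0$ degenerates, so you must reinstate it and carry out this (or an equivalent) parameter-explicit combination for your part (i) to hold.
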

\begin{proof} \textbf{(\textit{i})} Let $Y_h=(\mathbf{v}_h, q_h, S_h, \psi_h)\in\mathbf{X}_h$. In order to prove \eqref{eq:positivity_Mh}, we recall the definition of the thermal dilatation coefficient $b_0$ and bulk modulus $K$ to infer 
\begin{equation}
\label{eq:bnd_Mh}
\begin{aligned}
\mathcal{M}_h(Y_h, Y_h) & \ge ||\lambda^{-\frac12} (\psi_h + \alpha q_h + \beta S_h)||^2 +
||\sqrt{a_0-b_0}\, S_h ||^2  \\ &\quad +  
||\sqrt{c_0-b_0}\, q_h ||^2 + ||\sqrt{b_0}\, (S_h-q_h)||^2
+\mathcal{D}_h(\psi_h, \psi_h)\\
&\ge ||K^{-\frac12} (\psi_h + \alpha q_h + \beta S_h)||^2 + 
||\sqrt{a_0-b_0}\, S_h ||^2+\mathcal{D}_h(\psi_h, \psi_h) \\ &\quad + ||\sqrt{c_0-b_0}\, q_h ||^2 +||\sqrt{\beta(\alpha-\phi)K^{-1}} (S_h-q_h)||^2.
\end{aligned}
\end{equation}
Then, we let $d_0 = (1+\gamma_f-\alpha-\beta) (\alpha-\phi)K^{-1}\ge 0$ and we express $\psi_h$ as a linear combination of the terms appearing in the right-hand side of the previous bound, i.e. 
$$
\sqrt{d_0} \psi_h = c_1 \left(\frac{\psi_h+\alpha q_h+\beta S_h}{\sqrt{K}}\right) + c_2 \frac{\sqrt{\beta(\alpha-\phi)}}{\sqrt{K}}\, (S_h-q_h) + c_3 \sqrt{c_0-b_0}\ q_h,
$$
with coefficients $c_1 = \sqrt{(1+\gamma_f-\alpha-\beta)(\alpha-\phi)}\lesssim 1$, $c_2 =\sqrt{\beta(1+\gamma_f-\alpha-\beta)}\lesssim 1$ and \\ $c_3 = \alpha + \beta \lesssim 1$ according to \eqref{eq:bound_beta}.
Hence, using the triangle inequality one has 
$$
||\sqrt{d_0}\, \psi_h ||^2 \lesssim 
\left|\left|\sqrt{\frac{c_0-b_0}2}\, q_h \right|\right|^2 +\left|\left|\frac{\sqrt{\beta(\alpha-\phi)}}{\sqrt{K}} (S_h-q_h)\right|\right|^2 + \left|\left|\frac{\psi_h + \alpha q_h + \beta S_h}{\sqrt{K}}\right|\right|^2.
$$
Finally, plugging the previous bound into \eqref{eq:bnd_Mh} yields the conclusion.

\textbf{(\textit{ii})} We now proceed with the proof of  property \eqref{eq:positivity_Ah}. First, we observe that we can bound the transport term $\mathcal{C}_h$ from below by using a discrete Poincar\'e inequality (cf. \cite{Brenner:03} and \cite[Corollary 5.4]{DiPietro2012}) together with the Young inequality to obtain 
\begin{equation}
    \label{eq:wp_Bh}
    \begin{aligned}
    \mathcal{C}_h(q_h,S_h) \gtrsim - \frac12  ||\sqrt{\mathbf{K}} \nabla_h p_h||^2 - \frac{k_M ||\boldsymbol{\eta}||_{L^\infty(\Omega)^d}^2}{2\theta_m} ||\sqrt{\boldsymbol{\Theta}} \ \nabla_h T_h||^2.
    \end{aligned}
\end{equation}
Thus, owing to the coercivity properties stated in Lemma \ref{lem:boundcoerc_bil_forms} it is inferred that
\begin{equation}
    \label{eq:wp_Iterm}
    \begin{aligned}
    & \mathcal{A}_{h}(Y_h,Y_h) \gtrsim 
    \left(1 - \frac{k_M ||\boldsymbol{\eta}||_{L^\infty(\Omega)^d}^2}{2\theta_m}\right)||S_h||_{DG,T}^2 + ||q_h||_{DG,p}^2 + ||\mathbf{v}_h||_{DG,e}^2,
    \end{aligned}
\end{equation}
which corresponds to \eqref{eq:positivity_Ah} under the assumption that $\theta_m \gtrsim k_M ||\boldsymbol{\eta}||_{L^\infty(\Omega)^d}^2$.

\textbf{(\textit{iii})} 
The proof of condition \eqref{eq:gen_inf_sup} follows from \cite[Proposition 3.1]{antonietti2020_stokesDG}, which hinges on the inverse trace inequality \eqref{eq:trace_inverse_ineq} and the fact that $\nabla_h \varphi_h \in \mathbf{V}_h^{\ell}$ for all $\varphi_h \in Q_h^m$. 
\end{proof}
\begin{remark}
The theoretical requirement on $||\boldsymbol{\eta}||_{L^\infty(\Omega)^d}$ introduced to prove \eqref{eq:positivity_Ah} is meant to simplify the stability analysis but, as we observe in the robustness test cases of Section \ref{sec:robustness_analysis}, is not needed in practice. Indeed, it is possible to weaken the assumption by controlling the convective term $\mathcal{C}_h$ using the generalized inf-sup condition \eqref{eq:gen_inf_sup} and the positive terms in \eqref{eq:positivity_Mh}. 
\end{remark}

\subsection{Stability estimates}
\label{subsec:stability_est}
In this section we derive the a priori estimate for the semi-discrete problem \eqref{eq:discrete_weak_form2}. To ease the notation we define the norm 
\begin{equation}
    ||X_h||_{\mathcal{E}}^2 = ||(\mathbb{B}+d_0)^\frac12 \varphi_h||^2 +(a_0-b_0)||T_h||^2 + (c_0-b_0)||p_h||^2 + ||\mathbf{u}_h||_{DG,e}^2,
\end{equation}
for all $X_h = (\mathbf{u}_h, p_h, T_h, \varphi_h)\in \mathbf{X}_h$, with $d_0$ and $\mathbb{B}$ defined as in Proposition \ref{prop:pos}. We remark that the stability estimate below shows a linear dependence on the final time $T_f$ since we are able to establish the result without resorting to the Gr\"onwall Lemma.
\begin{theorem}
\label{thm:stability_est}
Let the assumptions of Proposition \ref{prop:pos} be satisfied and let $X_h = (\mathbf{u}_h, p_h, T_h, \varphi_h)(t) \\ \in \mathbf{X}_h$ be the solution of \eqref{eq:discrete_weak_form2} for any $t \in (0,T_f]$. Then, it holds
\begin{equation}
\begin{aligned}
    & \sup_{t\in (0,T_f]} \left(||X_h||_{\mathcal{E}}^2\right) + 
    \int_0^{T_f}\left(
    ||T_h(s)||_{DG,T}^2+||p_h(s)||_{DG,p}^2\right){\rm d}s
    \lesssim \mathcal{R}_0 \,+
    \\&\qquad 
    \theta_m^{-1}||H||_{L^2((0,T_f];L^2(\Omega))}^2 
    + k_m^{-1}||g||_{L^2((0,T_f];L^2(\Omega))}^2
    + (T_f+\mu_m^{-1})||\mathbf{f}||_{H^1((0,T_f];L^2(\Omega))}^2,
\end{aligned}
\end{equation}
with $\mathcal{R}_0 = \mathcal{M}_h(X_{h,0}, X_{h,0}) +||\mathbf{u}_{h,0}||_{DG,e}^2 +(1+\mu_m^{-1})||\mathbf{f}(0)||^2$ depending on the initial condition $X_{h,0}\in\mathbf{X}_h$ and where the hidden constant does not depend on the final time $T_f$, the mesh size $h$, and the polynomial degrees $\ell, m$.
\end{theorem}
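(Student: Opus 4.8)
The plan is to produce a single energy identity by testing the linearized semi-discrete problem \eqref{eq:discrete_weak_form2} with the choice $Y_h=(\dot{\mathbf{u}}_h,p_h,T_h,\varphi_h)$, then to estimate the right-hand side in a way that avoids Gr\"onwall's lemma, and finally to recover the missing control on $\|\varphi_h\|$ via the generalized inf-sup condition of Proposition~\ref{prop:pos}(iii). With this $Y_h$ the two hydro-mechanical coupling terms cancel, since $-\mathcal{B}_h(\varphi_h,\dot{\mathbf{u}}_h)+\mathcal{B}_h(\varphi_h,\dot{\mathbf{u}}_h)=0$, while the symmetry of $\mathcal{M}_h$, $\mathcal{A}_h^e$ and $\mathcal{D}_h$ in their arguments turns the remaining contributions into time derivatives, giving
\begin{equation*}
\begin{aligned}
&\tfrac12\tfrac{{\rm d}}{{\rm d}t}\!\left(\mathcal{M}_h(X_h,X_h)+\mathcal{A}_h^e(\mathbf{u}_h,\mathbf{u}_h)\right)+\mathcal{A}_h^T(T_h,T_h)+\mathcal{C}_h(p_h,T_h)+\mathcal{A}_h^p(p_h,p_h)\\
&\qquad = (H,T_h)+(g,p_h)+(\mathbf{f},\dot{\mathbf{u}}_h).
\end{aligned}
\end{equation*}
I would integrate this identity over $(0,t)$ for an arbitrary $t\in(0,T_f]$.

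On the left-hand side I would bound $\mathcal{M}_h(X_h,X_h)$ from below by $(a_0-b_0)\|T_h\|^2+(c_0-b_0)\|p_h\|^2+\|\sqrt{d_0}\,\varphi_h\|^2+\mathcal{D}_h(\varphi_h,\varphi_h)$ using Proposition~\ref{prop:pos}(i), and $\mathcal{A}_h^e(\mathbf{u}_h,\mathbf{u}_h)(t)\gtrsim\|\mathbf{u}_h(t)\|_{DG,e}^2$ using the coercivity of $\mathcal{A}_h^e$ in Lemma~\ref{lem:boundcoerc_bil_forms}. For the parabolic part I would reproduce the argument of Proposition~\ref{prop:pos}(ii): combining the coercivity of $\mathcal{A}_h^T$ and $\mathcal{A}_h^p$ with the lower bound \eqref{eq:wp_Bh} on $\mathcal{C}_h$ and the smallness hypothesis $\|\boldsymbol{\eta}\|_{L^\infty(\Omega)^d}\lesssim\sqrt{\theta_m k_M^{-1}}$ yields $\mathcal{A}_h^T(T_h,T_h)+\mathcal{C}_h(p_h,T_h)+\mathcal{A}_h^p(p_h,p_h)\gtrsim\|T_h\|_{DG,T}^2+\|p_h\|_{DG,p}^2$, hence after integration a lower bound by a fixed multiple of $\int_0^t(\|T_h\|_{DG,T}^2+\|p_h\|_{DG,p}^2)\,{\rm d}s$.

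The delicate point is the right-hand side. The terms $\int_0^t(H,T_h)$ and $\int_0^t(g,p_h)$ are treated by Cauchy--Schwarz and Young, using discrete Poincar\'e inequalities which (together with the definition of the penalties in \eqref{eq:stabilization_func}) give $\|T_h\|\lesssim\theta_m^{-1/2}\|T_h\|_{DG,T}$ and $\|p_h\|\lesssim k_m^{-1/2}\|p_h\|_{DG,p}$; the $\|T_h\|_{DG,T}^2$ and $\|p_h\|_{DG,p}^2$ parts are absorbed by the dissipation integral, leaving $\theta_m^{-1}\|H\|_{L^2((0,T_f];L^2(\Omega))}^2$ and $k_m^{-1}\|g\|_{L^2((0,T_f];L^2(\Omega))}^2$. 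The term $\int_0^t(\mathbf{f},\dot{\mathbf{u}}_h)$ is integrated by parts in time — this is where $\mathbf{f}\in H^1((0,T_f];L^2(\Omega))$ is used — producing $(\mathbf{f}(t),\mathbf{u}_h(t))-(\mathbf{f}(0),\mathbf{u}_{h,0})-\int_0^t(\dot{\mathbf{f}},\mathbf{u}_h)$; with a discrete Korn inequality ($\|\mathbf{u}_h\|\lesssim\mu_m^{-1/2}\|\mathbf{u}_h\|_{DG,e}$), Young's inequality, $\int_0^t\|\dot{\mathbf{f}}\|\le\sqrt{T_f}\,\|\dot{\mathbf{f}}\|_{L^2((0,T_f];L^2(\Omega))}$ and $\|\mathbf{f}(t)\|^2\lesssim\|\mathbf{f}(0)\|^2+T_f\|\dot{\mathbf{f}}\|_{L^2((0,T_f];L^2(\Omega))}^2$, each of these contributes either a small multiple of $\|\mathbf{u}_h(t)\|_{DG,e}^2$ or of $\sup_{(0,t]}\|\mathbf{u}_h\|_{DG,e}^2$, plus data terms of the form $\mu_m^{-1}\|\mathbf{f}(0)\|^2$ and $(T_f+\mu_m^{-1})\|\mathbf{f}\|_{H^1((0,T_f];L^2(\Omega))}^2$. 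The key observation is that the left-hand side already controls $\|\mathbf{u}_h(s)\|_{DG,e}^2$ at every $s\le t$, so after taking the supremum over $t\in(0,T_f]$ all these $\|\mathbf{u}_h\|_{DG,e}^2$-terms can be absorbed into the left-hand side by choosing the Young parameters small enough; this is precisely what closes the estimate \emph{without} Gr\"onwall and yields only a linear dependence on $T_f$. One thus controls $\sup_t\!\big(\mathcal{M}_h(X_h,X_h)+\|\mathbf{u}_h\|_{DG,e}^2\big)+\int_0^{T_f}(\|T_h\|_{DG,T}^2+\|p_h\|_{DG,p}^2)$ by $\mathcal{R}_0$ plus the stated source terms.

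Finally, since $\mathcal{M}_h$ only controls the possibly degenerate weighted norm $\|\sqrt{d_0}\,\varphi_h\|$ whereas $\|X_h\|_{\mathcal{E}}$ contains $\|(\mathbb{B}+d_0)^{1/2}\varphi_h\|$, I would recover the extra $\mathbb{B}\|\varphi_h\|^2$ from the inf-sup condition: testing \eqref{eq:discrete_weak_form2} with $(\mathbf{v}_h,0,0,0)$ gives $\mathcal{B}_h(\varphi_h,\mathbf{v}_h)=\mathcal{A}_h^e(\mathbf{u}_h,\mathbf{v}_h)-(\mathbf{f},\mathbf{v}_h)$ for all $\mathbf{v}_h\in\mathbf{V}_h^{\ell}$; inserting this into \eqref{eq:gen_inf_sup} and using the boundedness of $\mathcal{A}_h^e$ and the discrete Korn inequality yields $\mathbb{B}\|\varphi_h\|\lesssim\|\mathbf{u}_h\|_{DG,e}+\mu_m^{-1/2}\|\mathbf{f}\|+\mathcal{D}_h(\varphi_h,\varphi_h)^{1/2}$, all three terms already being under control. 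Using $\|(\mathbb{B}+d_0)^{1/2}\varphi_h\|^2=\mathbb{B}\|\varphi_h\|^2+\|\sqrt{d_0}\,\varphi_h\|^2$ and collecting the bounds then gives the claim. I expect the main obstacle to be exactly the bookkeeping of the previous paragraph: one must resist closing the estimate with Gr\"onwall after a bound of the type $\int_0^t(\dot{\mathbf{f}},\mathbf{u}_h)\le\tfrac12\int_0^t\|\mathbf{u}_h\|^2+\dots$, and instead isolate a supremum of $\|\mathbf{u}_h\|_{DG,e}$ that can be reabsorbed, all while carrying along the degeneracy of the $\varphi_h$-norm that makes the inf-sup step indispensable.
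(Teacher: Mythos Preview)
Your proposal is correct and follows essentially the same route as the paper's proof: test with $(\dot{\mathbf{u}}_h,p_h,T_h,\varphi_h)$ so that the $\mathcal{B}_h$ terms cancel, use the symmetry of $\mathcal{M}_h$ and $\mathcal{A}_h^e$, apply Proposition~\ref{prop:pos}(i)--(ii) and Lemma~\ref{lem:boundcoerc_bil_forms} on the left, treat $(\mathbf{f},\dot{\mathbf{u}}_h)$ by integration by parts in time, and close by taking the supremum in $t$ rather than invoking Gr\"onwall. The only cosmetic difference is the order of operations: the paper performs the inf-sup step \emph{first} (its Step~1), deriving $\|X_h\|_{\mathcal{E}}^2\lesssim\mathcal{M}_h(X_h,X_h)+\|\mathbf{u}_h\|_{DG,e}^2+\mu_m^{-1}\|\mathbf{f}\|^2$ and then using this inequality directly as the lower bound in the energy balance, whereas you defer the inf-sup argument to the end; both orderings are equivalent and produce the same constants.
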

\begin{proof} 
We divide the proof into four steps. First, we use the inf-sup condition \eqref{eq:gen_inf_sup} to obtain a robust estimate on $\varphi_h$ which holds also when $d_0$ vanishes. Then, we derive the total energy balance associated to \eqref{eq:discrete_weak_form2}. In the third step, we estimate the right-hand side of the previous energy balance and in the fourth step we conclude.

\vspace{1mm}\textbf{Step 1}:
Taking $Y_h = (\mathbf{v_h}, 0, 0, 0)$ as test function in \eqref{eq:discrete_weak_form2} one has
$$
\mathcal{B}_h(\varphi_h, \mathbf{v}_h) = 
\mathcal{A}_h^e(\mathbf{u}_h, \mathbf{v}_h) - (\mathbf{f}, \mathbf{v}_h).
$$
Plugging the previous identity into \eqref{eq:gen_inf_sup}, using Lemma \eqref{lem:boundcoerc_bil_forms}, and applying the discrete Poincaré--Korn inequality \cite{Botti2020_korn, Brenner2004} it is inferred that
\begin{equation}
    \label{eq:bndphi_robust}
    \begin{aligned}
    \mathbb{B}^2||\varphi_h||^2 &\lesssim 
    \mathcal{D}_h(\varphi_h, \varphi_h) + \left(
    \underset{\mathbf{0} \neq \mathbf{v}_h \in \mathbf{V}^{\ell}_h}{\mbox{sup}} \frac{\mathcal{A}_h^e(\mathbf{u}_h, \mathbf{v}_h) - (\mathbf{f}, \mathbf{v}_h)}{||\mathbf{v}_h||_{DG,e}} \right)^2 \\ &\lesssim
    \mathcal{D}_h(\varphi_h, \varphi_h) + ||\mathbf{u}_h||_{DG,e}^2 + \mu_m^{-1}||\mathbf{f}||^2.
    \end{aligned}
\end{equation}
Therefore, it follows from the estimates \eqref{eq:positivity_Mh} and \eqref{eq:bndphi_robust} that
\begin{equation}
    \label{eq:bnd_L2terms}
    ||X_h||_{\mathcal{E}}^2 \lesssim
    \mathcal{M}_h(X_h, X_h) + ||\mathbf{u}_h||_{DG,e}^2 + \frac{||\mathbf{f}||^2}{\mu_m}
\end{equation}

\textbf{Step 2}:
Let $t\in(0,T_f]$. We test \eqref{eq:discrete_weak_form2} with $Y_h = (\dot{\mathbf{u}}_h, p_h, T_h, \varphi_h)$ to get
\begin{equation}
\begin{aligned}
    \label{eq:stab_est_prob}
    \mathcal{M}_h(\dot{X}_h, X_h) + &\mathcal{A}_h^e(\mathbf{u}_h,\dot{\mathbf{u}}_h)
    + \mathcal{A}_h^T(T_h,T_h) + \mathcal{C}_h(p_h,T_h)
    + \mathcal{A}_h^p(p_h,p_h)  \\
    &- \cancel{\mathcal{B}_h(\varphi_h,\dot{\mathbf{u}}_h)} +  \cancel{\mathcal{B}_h(\varphi_h,\dot{\mathbf{u}}_h)}   
    = (H,T_h) + (g,p_h) + (\mathbf{f},\dot{\mathbf{u}}_h).
    \end{aligned}
\end{equation}
Owing to the symmetry of the bilinear forms $\mathcal{M}_h$ and $\mathcal{A}_h^e$ we observe that 
$$
\left(\mathcal{M}_h(\dot{X}_h, X_h) + \mathcal{A}_h^e(\mathbf{u}_h,\dot{\mathbf{u}}_h)\right) (t)=
\frac12 \frac{d}{dt} \left(\mathcal{M}_h(X_h, X_h)(t) +
\mathcal{A}_h^e(\mathbf{u}_h,\mathbf{u}_h)(t)\right).
$$
Using the previous identity and integrating \eqref{eq:stab_est_prob} in time between $0$ and $t$, we have 
$$
\begin{aligned}
  \frac12 &\left(\mathcal{M}_h (X_h, X_h) +
\mathcal{A}_h^e(\mathbf{u}_h,\mathbf{u}_h)\right)(t) +
\int_0^t\hspace{-1mm}\left(\mathcal{A}_h^T(T_h,T_h) + \mathcal{C}_h(p_h,T_h)
    + \mathcal{A}_h^p(p_h,p_h)\right) (s)\, {\rm d}s \\
    &\;\qquad = \int_0^t \left( (H,T_h) + (g,p_h) + (\mathbf{f},\dot{\mathbf{u}}_h)\right) (s)\, {\rm d}s +\frac12\left(\mathcal{M}_h(X_h, X_h) +
    \mathcal{A}_h^e(\mathbf{u}_h,\mathbf{u}_h)\right)(0).
\end{aligned}
$$
We can bound from below the left-hand side of the previous energy balance by using \eqref{eq:positivity_Ah} and \eqref{eq:bnd_L2terms} to obtain
\begin{equation}
\begin{aligned}
    \label{eq:stab_step2}
    &||X_h(t)||_{\mathcal{E}}^2 
    +\int_0^t \left( ||T_h(s)||_{DG,T}^2+||p_h(s)||_{DG,p}^2\, \right){\rm d}s
    \lesssim \mathcal{M}_h(X_{h,0}, X_{h,0}) 
    \\&\qquad
    + ||\mathbf{u}_{h,0}||^2_{DG,e}
    +\frac{||\mathbf{f}(t)||^2}{\mu_m}
    +\int_0^t \left((H,T_h)+(g,p_h)+ (\mathbf{f},\dot{\mathbf{u}}_h)\right) (s)\, {\rm d}s
    \\&\qquad
    \lesssim \mathcal{M}_h(X_{h,0}, X_{h,0}) + ||\mathbf{u}_{h,0}||^2_{DG,e}+\mathcal{R}_1  +
    \mathcal{R}_2+\mathcal{R}_3,
    \end{aligned}
\end{equation}
with $X_{h,0}=(\mathbf{u}_{h,0}, p_{h,0}, T_{h,0}, \varphi_{h,0})$ corresponding to the initial condition of the semi-discrete problem \eqref{eq:discrete_weak_form2} and  
$$
\mathcal{R}_1 = \mu_m^{-1}||\mathbf{f}(t)||^2, \
\mathcal{R}_2 =\int_0^t  \left( (H,T_h)(s) + (g,p_h)(s)\right) \, {\rm d}s,\;\ \text{and }\ \mathcal{R}_3 = \int_0^t  (\mathbf{f},\dot{\mathbf{u}}_h) (s) \, {\rm d}s.
$$

\vspace{1mm}\textbf{Step 3}:
We proceed by bounding the terms in the right-hand side of \eqref{eq:stab_step2} starting with $\mathcal{R}_1$. Recalling the regularity assumption $\mathbf{f}\in H^1((0,T_f];L^2(\Omega))$ and using the fact that \\ $F(t) = F(0)+\int_0^t \dot{F}(s)\, {\rm d}s$ for all $F\in H^1([0,t])$, we have
\begin{equation}
    \label{eq:rhs_wp_semidiscr_R1}
\mathcal{R}_1 \le \mu_m^{-1}||\mathbf{f}(t)||^2 \lesssim \mu_m^{-1}\left(\int_0^t ||\dot{\mathbf{f}}(s)||^2 \, {\rm d}s + ||\mathbf{f}(0)||^2\right).
\end{equation}
In order to bound the term $\mathcal{R}_2$ use the Cauchy--Schwarz, discrete Poincar\'e, and Young inequality inequality to infer that
\begin{equation}
    \label{eq:rhs_wp_semidiscr_CSineq}
    \begin{aligned}
    \mathcal{R}_2 &\leq  \int_0^t \Big( ||H||\, ||T_h|| 
    + ||g||\, ||p_h||\Big)(s)\, {\rm d}s \\ &\lesssim
    \int_0^t \Big( \theta_m^{-\frac12}||H||\,||T_h||_{DG,T} +
    k_m^{-\frac12}||g||\,||p_h||_{DG,p}\Big)(s)\, {\rm d}s
    \\ &\lesssim
    \int_0^t \left(||T_h(s)||_{DG,T}^2+||p_h(s)||_{DG,p}^2\right)\, {\rm d}s + \int_0^t \left(\theta_m^{-1}||H(s)||^2 +
    k_m^{-1}||g(s)||^2\right)\, {\rm d}s
    \end{aligned}
\end{equation}
Concerning the term $\mathcal{R}_3$, since $\mathbf{f}\in H^1((0,T_f];L^2(\Omega))$, we are allowed to integrate by parts with respect to time and obtain
\begin{equation}
    \label{eq:rhs_wp_semidiscr_timeint}
    \begin{aligned}
    \mathcal{R}_3&=  \int_0^t  - (\dot{\mathbf{f}},\mathbf{u}_h)(s)\, {\rm d}s 
     + (\mathbf{f},\mathbf{u}_h)(t) - (\mathbf{f},\mathbf{u}_h)(0)
     \\ &\lesssim 
     \int_0^t t^{\frac12}||\dot{\mathbf{f}}(s)||\, t^{-\frac12}||\mathbf{u}_h||\, {\rm d}s + ||\mathbf{f}(t)||\ ||\mathbf{u}_h(t)|| +
     ||\mathbf{f}(0)||\ ||\mathbf{u}_{h,0}||
     \\ &\lesssim \int_0^t  \bigg( t||\dot{\mathbf{f}}(s)||^2 
     + \frac{||\mathbf{u}_h(s)||_{DG,e}^2}{t}\, \bigg) {\rm d}s  
     + ||\mathbf{u}_h(t)||_{DG,e}^2  
     + ||\mathbf{f}(0)||^2 + ||\mathbf{u}_{h,0}||_{DG,e}^2,
    \end{aligned}
\end{equation}
where, to pass to the third line, we have used again the discrete Korn--Poincaré inequality followed by the Young inequality and the second inequality in \eqref{eq:rhs_wp_semidiscr_R1}.
Moreover, taking the supremum for $s\in (0,t]$ in the second integrand in \eqref{eq:rhs_wp_semidiscr_timeint} leads to 
\begin{equation}
    \label{eq:rhs_R3_semidiscr}
    \mathcal{R}_3 \lesssim 
     t\int_0^t ||\dot{\mathbf{f}}(s)||^2\, {\rm d}s + \sup_{s\in (0,t]}||\mathbf{u}_h(s)||_{DG,e}^2 + 
     ||\mathbf{f}(0)||^2 + ||\mathbf{u}_{h,0}||_{DG,e}^2.
\end{equation}

\textbf{Step 4}:
To conclude, we plug the estimates \eqref{eq:rhs_wp_semidiscr_R1}, \eqref{eq:rhs_wp_semidiscr_CSineq}, and \eqref{eq:rhs_R3_semidiscr} into \eqref{eq:stab_step2} and we take the supremum for $t\in(0,\overline{t}]$, with $0<\overline{t}\le T_f$, to get
$$
    \begin{aligned}
    &\sup_{t\in (0,\overline{t}]} \left(||X_h||_{\mathcal{E}}^2\right) + \int_0^{\overline{t}} \left( ||T_h(s)||_{DG,T}^2+||p_h(s)||_{DG,p}^2 \right) \, {\rm d}s 
    \lesssim \frac12\sup_{t\in (0,\overline{t}]}||\mathbf{u}_h(s)||_{DG,e}^2 \\
    &\quad +\frac12 \int_0^{\overline{t}} \left( ||T_h(s)||_{DG,T}^2+||p_h(s)||_{DG,p}^2 \right)\, {\rm d}s + \int_0^{\overline{t}} \left( \frac{||H(s)||^2}{\theta_m} +
    \frac{||g(s)||^2}{k_m} \right) \, {\rm d}s \\ &\quad+ 
    \frac{\overline{t}\mu_m+1}{\mu_m}\int_0^{\overline{t}} ||\dot{\mathbf{f}}(s)||^2\, {\rm d}s 
    + \frac{1+\mu_m}{\mu_m}||\mathbf{f}(0)||^2 
    + \mathcal{M}_h(X_{h,0}, X_{h,0})  
    + ||\mathbf{u}_{h,0}||_{DG,e}^2
    \end{aligned}
$$
Rearranging the previous bound, it is inferred that
\begin{equation}
    \label{eq:stab_estimate_1}
    \begin{aligned}
    \sup_{t\in (0,\overline{t}]}& \left(||X_h||_{\mathcal{E}}^2\right) + \int_0^{\overline{t}} \left( ||T_h(s)||_{DG,T}^2+||p_h(s)||_{DG,p}^2 \right) \, {\rm d}s
    \\& \lesssim \mathcal{R}_0
    +\int_0^{\overline{t}} \left( \frac{||H(s)||^2}{\theta_m} +
    \frac{||g(s)||^2}{k_m} + \frac{\overline{t}\mu_m+1}{\mu_m}  ||\dot{\mathbf{f}}(s)||^2 \right)\, {\rm d}s, 
    \end{aligned}
\end{equation}
where we have defined $\mathcal{R}_0 = \mathcal{M}_h(X_{h,0}, X_{h,0}) +||\mathbf{u}_{h,0}||_{DG,e}^2 +(1+\mu_m^{-1})||\mathbf{f}(0)||^2$ only depending on the initial problem data. Since \eqref{eq:stab_estimate_1} holds for an  arbitrary $\overline{t}\in (0, T_f]$, this concludes the proof.
\end{proof}

\section{Error analysis}
\label{sec:error_est}
In this section we establish an a priori error estimate for the solution of the PolyDG semi-discrete problem \eqref{eq:discrete_weak_form2}. For the sake of simplicity, we decide not to explicitly track the dependencies of the inequality constants with respect to the model coefficients. Hence, in what follows, the constant hidden in the notation $x\lesssim y$ might depend on the thermo-poroelastic parameters and on $||\boldsymbol{\eta}||_{L^\infty(\Omega)}^d$.

We start by defining the $DG$-norms that will be needed in the error analysis
\begin{equation}
    \label{eq:DG_triple_norms}
    \begin{aligned}
    &|||S|||^2_{DG,T} = ||S||^2_{DG,T} + ||\sigma^{-\frac12} \avg{ \boldsymbol{\Theta}\nabla_h S}||_{\mathcal{F}}^2 \ &&  \forall \ S \in H^2({\mathcal{T}_h}),\\ 
    & |||q|||^2_{DG,p} = ||q||^2_{DG,p} + || \xi^{-\frac12} \avg{\mathbf{K} \nabla_h q}||_{\mathcal{F}}^2 \ && \forall \ q \in H^2({\mathcal{T}_h}),\\ 
    & |||\mathbf{v}|||^2_{DG,e} = ||\mathbf{v}||^2_{DG,e} +  ||\zeta^{-\frac12}\avg{\boldsymbol{\epsilon}_h(\mathbf{v})}||_{\mathcal{F}}^2 \ && \forall \ \mathbf{v} \in \mathbf{H}^2(\mathcal{T}_h), \\
    &|||\psi|||^2_{DG,\varphi} = ||\psi||^2 + ||\varrho^{\frac12} \avg{ \psi}||_{\mathcal{F}}^2 \ && \forall \ \psi \in H^1(\mathcal{T}_h).
    \end{aligned}
\end{equation}
Then, we introduce the interpolants  $X_I =\left(\mathbf{u}_I, p_I, T_I, \varphi_I \right) \in \mathbf{X}_h$ of the solution to the continuous formulation \eqref{eq:semi_discrete_cont}. 
In order to properly bound the interpolation errors, we define the Stein extension operator and state a result instrumental for the error analysis. For a polytopic mesh $\mathcal{T}_h$ satisfying Assumption~\ref{ass:A2}, the Stein operator $\mathcal{E}: H^n(\kappa) \rightarrow H^n(\mathbb{R}^d)$ is defined for any $\kappa \in \mathcal{T}_h$ and $m \in \mathbb{N}_0$ such that
\begin{equation}
    \label{eq:stein_operator}
    \mathcal{E}v \rvert_{\kappa} = v, \quad ||\mathcal{E}v||_{H^m(\mathbb{R}^d)} \lesssim ||v||_{H^m(\kappa)} \qquad \forall v \in H^m(\kappa).  
\end{equation}
The corresponding vector-valued version acts component-wise and is denoted in the same way. 
In what follows, for any $\kappa\in\mathcal{T}_h$, we will denote by $\mathcal{K}_\kappa$ the simplex belonging to $\mathcal{T}_h^*$ such that $\kappa\subset\mathcal{K}_\kappa$.
Then, the following approximation properties hold (see \cite[Lemma 3.6]{AntoniettiMazzieri2018}, \cite[Theorem 36]{Cangiani2017}, and \cite[Corollary 5.1]{antonietti2020_stokesDG} for the detailed proof): 
\begin{lemma}
    \label{lemma:interp}
    Let Assumption~\ref{ass:mesh_Th1} be fulfilled. For any 
    $S \in H^n(\mathcal{T}_h)$, $q \in H^n(\mathcal{T}_h)$, $\mathbf{w} \in \mathbf{H}^n(\mathcal{T}_h)$ with $n \geq 2$, there exist $S_I \in V_h^{\ell}$, $q_I \in V_h^{\ell}$, $\mathbf{w}_I \in \mathbf{V}_h^{\ell}$ such that
    \begin{equation}
        \begin{aligned}
        |||S - S_I|||_{DG,T}^2 \hspace{-0.05cm} + \hspace{-0.05cm} |||q - q_I|||_{DG,p}^2 & \hspace{-0.05cm} \lesssim \hspace{-0.05cm} \sum_{\kappa \in \mathcal{T}_h} \frac{h^{2 \min\{\ell_{\kappa} + 1, n \} - 2}}{\ell_{\kappa}^{2n-3}} \hspace{-0.05cm} \left( ||\mathcal{E}S||_{H^n(\mathcal{K}_\kappa)}^2 \hspace{-0.05cm} + \hspace{-0.05cm} ||\mathcal{E}q||_{H^n(\mathcal{K}_\kappa)}^2 \right)\\
        |||\mathbf{w} - \mathbf{w}_I|||_{DG,e}^2 &\lesssim \sum_{\kappa \in \mathcal{T}_h} \frac{h^{2 \min\{\ell_{\kappa} + 1, n \} - 2}}{\ell_{\kappa}^{2n-3}} \ ||\mathcal{E}\mathbf{w}||_{\mathbf{H}^n(\mathcal{K}_\kappa)}^2.
        \end{aligned}
    \end{equation}
    Moreover, let $\psi \in H^1(\mathcal{T}_h)$ be such that $(\mathcal{E}\psi)|_{\kappa} \in H^r(\kappa)$ for some $r \geq 1$ and for all $\kappa \in \mathcal{T}_h$. Then, there is $\psi_I\in Q_h^m$ such that
    \begin{equation}
        |||\psi - \psi_I|||^2_{DG,\varphi} + \mathcal{D}_h(\psi - \psi_I,\psi - \psi_I)
        \lesssim \sum_{\kappa \in \mathcal{T}_h} \frac{h^{2 \min\{m_{\kappa} + 1, r\}}}{m_{\kappa}^{2r}} \ ||\mathcal{E}\psi||_{H^r(\mathcal{K}_\kappa)}^2.
    \end{equation}
\end{lemma}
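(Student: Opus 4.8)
The plan is to build the interpolants by composing the Stein extension operator with a single $hp$-version polynomial approximation operator defined on the shape-regular covering simplices $\mathcal{K}_\kappa$, and then to transfer the bulk and face parts of the $DG$-norms \eqref{eq:DG_triple_norms} onto the polytopic elements by suitable trace inequalities. For a scalar field $S\in H^n(\mathcal{T}_h)$, on each $\kappa\in\mathcal{T}_h$ I would first extend $S|_\kappa$ to $\mathcal{E}S\in H^n(\mathbb{R}^d)$ via \eqref{eq:stein_operator}, then apply to $(\mathcal{E}S)|_{\mathcal{K}_\kappa}$ the classical $hp$-optimal projector $\widehat{\Pi}_\kappa\colon H^n(\mathcal{K}_\kappa)\to\mathbb{P}^{\ell_\kappa}(\mathcal{K}_\kappa)$ available on simplices (cf. \cite{Cangiani2017} and the references therein), and finally set $S_I|_\kappa=(\widehat{\Pi}_\kappa\mathcal{E}S)|_\kappa$, which lies in $\mathbb{P}^{\ell_\kappa}(\kappa)$ since $\kappa\subset\mathcal{K}_\kappa$ and restricting a polynomial does not raise its degree, so $S_I\in V_h^\ell$. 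The fields $q_I$, $\mathbf{w}_I$ (acting component-wise) and $\psi_I$ (with degree $m_\kappa$ instead of $\ell_\kappa$) are built in the very same way.

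The second step invokes the $hp$-approximation estimate on the simplex: for $0\le s\le n$,
\[ \|\mathcal{E}S-\widehat{\Pi}_\kappa\mathcal{E}S\|_{H^s(\mathcal{K}_\kappa)}^2 \lesssim \frac{h_{\mathcal{K}_\kappa}^{2\min\{\ell_\kappa+1,n\}-2s}}{\ell_\kappa^{2(n-s)}}\,\|\mathcal{E}S\|_{H^n(\mathcal{K}_\kappa)}^2, \]
with constant independent of $h_{\mathcal{K}_\kappa}$ and $\ell_\kappa$. Since $h_{\mathcal{K}_\kappa}\lesssim h_\kappa$ by \ref{ass:A2}(i) and $\mathcal{E}S|_\kappa=S|_\kappa$ by \eqref{eq:stein_operator}, the cases $s=0,1$ bound the volumetric terms $\|S-S_I\|_\kappa$ and $\|\sqrt{\boldsymbol{\Theta}}\,\nabla_h(S-S_I)\|_\kappa$, while the case $s=2$ (here $n\ge 2$ is used) bounds $\|D^2(S-S_I)\|_\kappa$, which is needed for the face terms. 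The same holds for $q_I$, and for $\mathbf{w}_I$ one additionally uses $\|\boldsymbol{\epsilon}_h(\mathbf{w}-\mathbf{w}_I)\|_\kappa\le\|\nabla_h(\mathbf{w}-\mathbf{w}_I)\|_\kappa$.

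To treat the face contributions in \eqref{eq:DG_triple_norms} I would apply, for each $F\subset\partial\kappa$, the multiplicative trace inequality $\|v\|_{L^2(F)}^2\lesssim h_\kappa^{-1}\|v\|_{L^2(\kappa)}^2+\|v\|_{L^2(\kappa)}\|\nabla v\|_{L^2(\kappa)}$ valid on polytopic-regular elements (Definition~\ref{def:unif_regular}) to $v=(S-S_I)|_\kappa$ and $v=\nabla(S-S_I)|_\kappa$, combine it with the Step~2 bounds and the discrete trace--inverse inequality \eqref{eq:trace_inverse_ineq} for the polynomial parts, and insert the explicit scalings $\sigma\simeq\overline{\Theta}_\kappa\,\ell^2 h_\kappa^{-1}$, $\xi\simeq\overline{K}_\kappa\,\ell^2 h_\kappa^{-1}$, $\zeta\simeq\mu_\kappa\,\ell^2 h_\kappa^{-1}$ of \eqref{eq:stabilization_func}. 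A direct computation then shows that each of $\|\sqrt{\sigma}\,\jump{S-S_I}\|_F^2$ and $\|\sigma^{-\frac12}\avg{\boldsymbol{\Theta}\nabla_h(S-S_I)}\|_F^2$ is controlled by the summand $h_\kappa^{2\min\{\ell_\kappa+1,n\}-2}\ell_\kappa^{-(2n-3)}\|\mathcal{E}S\|_{H^n(\mathcal{K}_\kappa)}^2$, the power of $\ell$ in the final rate being dictated by the jump term through the $\ell^2$ in $\sigma$. Summing over $\kappa\in\mathcal{T}_h$ and invoking the finite-overlap property \ref{ass:A2}(ii) of the covering $\mathcal{T}_h^*$ --- so that $\sum_\kappa\|\mathcal{E}S\|_{H^n(\mathcal{K}_\kappa)}^2$ is a legitimate right-hand side --- yields the first two estimates, the argument for $q_I$ and $\mathbf{w}_I$ being identical. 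For $\psi_I$ only an $L^2$-type control is needed: $\|\psi-\psi_I\|^2$ is the Step~2 bound with $s=0$ and degree $m_\kappa$, while $\|\sqrt{\varrho}\,\avg{\psi-\psi_I}\|_{\mathcal{F}}^2$ and $\mathcal{D}_h(\psi-\psi_I,\psi-\psi_I)$ are handled by the same multiplicative trace inequality, now observing that $\varrho\simeq h_\kappa m^{-1}$ cancels the $h_\kappa^{-1}$ produced by the trace and leaves one extra power of $h_\kappa$ and one order $r=1$ of regularity coming from the $\nabla(\psi-\psi_I)$ factor, so that the announced rate $h_\kappa^{2\min\{m_\kappa+1,r\}}m_\kappa^{-2r}$ follows after summation.

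The step I expect to be the main obstacle is precisely this bookkeeping: one must verify that the $hp$-approximation powers on $\mathcal{K}_\kappa$, the polytopic multiplicative trace inequality, the discrete inverse--trace inequality \eqref{eq:trace_inverse_ineq}, and the explicit $\ell^2 h_\kappa^{-1}$ (respectively $h_\kappa m^{-1}$) scaling of the penalty functions combine to reproduce exactly the exponents $2\min\{\ell_\kappa+1,n\}-2$ on $h$ and $-(2n-3)$ on $\ell$ in the first two bounds --- and, for the $\varrho$-weighted $\psi$-terms, the opposite $h$-scaling that puts them on the $h^{2\min\{m_\kappa+1,r\}}m_\kappa^{-2r}$ rate --- all while keeping the dependence on the number of faces per element inside the uniform constants. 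Since all these ingredients are already worked out in detail in \cite[Lemma 3.6]{AntoniettiMazzieri2018}, \cite[Theorem 36]{Cangiani2017}, and \cite[Corollary 5.1]{antonietti2020_stokesDG}, I would invoke those references for the technical verifications rather than repeat them here.
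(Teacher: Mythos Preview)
Your proposal is correct and follows exactly the approach of the cited references; in fact the paper itself does not give a proof but simply refers to \cite[Lemma 3.6]{AntoniettiMazzieri2018}, \cite[Theorem 36]{Cangiani2017}, and \cite[Corollary 5.1]{antonietti2020_stokesDG}, which are precisely the sources you invoke at the end. Your sketch (Stein extension to the covering simplex, $hp$-optimal projection there, restriction to $\kappa$, then multiplicative trace plus the explicit penalty scalings to handle the face terms) is the standard machinery behind those results, so you have essentially reconstructed the omitted proof rather than diverged from it.
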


\subsection{Error equations}
\label{sec:error_equations}
We continue the error analysis with the derivation of the equations satisfied by the discretization errors $E = (\mathbf{e}^u, e^p, e^T, e^{\varphi})$, where
\begin{equation}
    \begin{aligned}
    & \textbf{e}^u(t) = \mathbf{u}(t) - \mathbf{u}_h(t), \quad e^p(t) = p(t) - p_h(t), \\
    & e^T(t) = T(t) - T_h(t), \quad e^{\varphi}(t) = \varphi(t) - \varphi_h(t),\\
    \end{aligned}
\end{equation}
with $X_h = (\mathbf{u}_h, p_h, T_h, \varphi_h)(t)$ and $X=(\mathbf{u}, p, T, \varphi)(t)$ denoting for all $t\in (0,T_f]$ the solutions to \eqref{eq:discrete_weak_form2} and \eqref{eq:semi_discrete_cont}, respectively.
We remark that the errors can be splitted as $E(t) = E_I(t)-E_h(t)$, where $E_I(t) = X(t)-X_I(t)$ and $E_h(t) = X_I(t)-X_h(t)$. 

In order to extend the bilinear forms defined in \eqref{eq:bilinear_forms_discr} to the space of continuous solutions we need further regularity requirements on $X=(\mathbf{u}, p, T, \varphi)$. In particular, we assume element-wise $H^2$-regularity of the displacement, temperature, and pressure together with the continuity of the normal stress, fluid flow, and heat flux across the interfaces $F\in\mathcal{F}_I$ for all time $t\in (0,T_f]$. We also require $\varphi\in H^1((0,T_f];H^1(\Omega))$, so that the stabilization term $\mathcal{D}_h$ in \eqref{eq:discrete_weak_form2} vanishes when tested with the exact solution. This further hypothesis on the regularity of the pseudo-total pressure can be inferred from the continuous formulation by reasoning as in \cite[Section 2.3.3]{Botti2021}.

Under the previous regularity assumptions, we can insert the exact solutions $(\mathbf{u}, p, T, \varphi)$ into \eqref{eq:discrete_weak_form2} obtaining a formulation equivalent to \eqref{eq:semi_discrete_cont}. Subtracting the resulting equation from problem \eqref{eq:discrete_weak_form2} defining the discrete solutions, we infer
\begin{equation}
    \label{eq:error_eq_text}
    \begin{aligned}
    & \mathcal{M}_h(\dot{E}, Y_h) + \mathcal{A}_h(E, Y_h) - \mathcal{B}_h(e^{\varphi},\mathbf{v}_h) +  \mathcal{B}_h(\psi_h,\dot{\mathbf{e}}^u)
    = 0
    \end{aligned}
\end{equation}
for all $ Y_h = (\mathbf{v}_h, q_h, S_h, \psi_h) \in \mathbf{X}_h$. Here we assume that problem \eqref{eq:discrete_weak_form2} is supplemented by initial conditions $X_{h,0} = (\mathbf{u}_I(0), p_I(0), T_I(0), \varphi_I(0))$, where $\mathbf{u}_I, p_I, T_I, \varphi_I$ are the interpolants given by Lemma~\ref{lemma:interp}, so that the error equation \eqref{eq:error_eq_text} is completed by the condition $E_h(0) = \mathbf{0}$. We now test \eqref{eq:error_eq_text} against $\left(\dot{\mathbf{e}}_h^u, e_h^p, e_h^T, e_h^{\varphi} \right)$ and use the linearity of the bilinear forms to obtain
\begin{equation}
    \label{eq:error_eq3_text}
    \begin{aligned}
    & \mathcal{M}(\dot{E}_h, E_h) + \mathcal{A}_h^{T}(e_h^T,e_h^T) + \mathcal{C}_h(e_h^p,e_h^T) + \mathcal{A}_h^{p}(e_h^p,e_h^p) + \mathcal{A}_h^{e}(\mathbf{e}_h^u,\dot{\mathbf{e}}_h^u) = \mathcal{M}(\dot{E}_I, E_h) \\
    & + \mathcal{A}_h^{T}(e_I^T,e_h^T) + \mathcal{C}_h(e_I^p,e_h^T) + \mathcal{A}_h^{p}(e_I^p,e_h^p) + \mathcal{A}_h^{e}(\mathbf{e}_I^u, \dot{\mathbf{e}}_h^u) - \mathcal{B}_h(e_I^{\varphi},\dot{\mathbf{e}}_h^u) +  \mathcal{B}_h(e_h^{\varphi},\dot{\mathbf{e}}_I^u) 
    \end{aligned}
\end{equation}
The previous identity is the starting point for the error estimate of the next section.

\subsection{Error estimate}
\label{sec:error_estimate}
In this section we derive the a priori estimate for the semi-discrete problem \eqref{eq:discrete_weak_form2}. Before doing so, we provide an instrumental result establishing the boundedness of the discrete bilinear forms defined in \eqref{eq:bilinear_forms_discr}.
\begin{lemma}
\label{lemma:bound_bilinearform_error}
Let Assumption~\ref{assumption:model_problem} and Assumption~\ref{ass:mesh_Th1} be satisfied and assume that the polynomial degrees of the PolyDG approximation satisfy $m\leq\ell + 1$. Then, 
\begin{equation}
    \begin{aligned}
    \mathcal{A}_h^T(T,S) \lesssim \ & |||T|||_{DG,T} ||S||_{DG,T} \quad && \forall \ T \in H^2(\mathcal{T}_h), \ \forall \ S \in V_h^{\ell},\\
    \mathcal{A}_h^p(p,q) \lesssim \ & |||p|||_{DG,p} ||q||_{DG,p} \quad && \forall \ p \in H^2(\mathcal{T}_h), \ \forall \ q \in V_h^{\ell},\\
    \mathcal{A}_h^e(\mathbf{u},\mathbf{v}) \lesssim \ & |||\mathbf{u}|||_{DG,e} ||\mathbf{v}||_{DG,e} \quad && \forall \ \mathbf{u} \in \mathbf{H}^2(\mathcal{T}_h),\ \forall \ \mathbf{v} \in \mathbf{V}_h^{\ell},\\
    \mathcal{C}_h(p,S) \lesssim \ & |||p|||_{DG,p} ||S||_{DG,T} \quad && \forall \ p \in H^2(\mathcal{T}_h), \ \forall \ S \in V_h^{\ell},\\
    \mathcal{B}_h(\psi,\mathbf{u}) \lesssim \ & ||| \psi |||_{DG,\psi} || \mathbf{u} ||_{DG,e} \quad && \forall \ \psi \in H^1(\mathcal{T}_h), \ \forall \ \mathbf{u} \in \mathbf{V}^{\ell}_h,\\
    \mathcal{B}_h(\psi,\mathbf{u}) \lesssim \ & || \psi || \ ||| \mathbf{u} |||_{DG,e} \quad && \forall \ \psi \in Q_h^m, \ \forall \ \mathbf{u} \in \mathbf{H}^2(\mathcal{T}_h),\\
    \end{aligned}
\end{equation}
\end{lemma}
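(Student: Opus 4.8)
The plan is to prove each of the six boundedness estimates by the same standard DG recipe: split the bilinear form into the volume (broken) contribution and the face contributions coming from the consistency and jump terms, then bound each piece by Cauchy--Schwarz and the definitions of the energy and triple norms in \eqref{eq:DG_norms}--\eqref{eq:DG_triple_norms}. For the first three estimates (those involving $\mathcal{A}_h^T$, $\mathcal{A}_h^p$, $\mathcal{A}_h^e$) the argument is essentially identical up to the names of the coefficients $\boldsymbol{\Theta}$, $\mathbf{K}$, $2\mu$ and the stabilization functions $\sigma$, $\xi$, $\zeta$; I would carry out the thermal case in detail and simply indicate that the other two follow \emph{mutatis mutandis}. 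Since one argument is from $H^2(\mathcal{T}_h)$ and the other from the discrete space, only one of the two face integrals can be pushed onto the discrete factor; the one landing on the $H^2$ factor is precisely the average term that the triple norm has been augmented to control.

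Concretely, for $\mathcal{A}_h^T(T,S)$ with $T\in H^2(\mathcal{T}_h)$ and $S\in V_h^\ell$, I would write
\begin{align*}
\mathcal{A}_h^T(T,S) &= (\boldsymbol{\Theta}\nabla_h T,\nabla_h S) - \sum_{F\in\mathcal{F}}\int_F\avg{\boldsymbol{\Theta}\nabla_h T}\cdot\jump{S} \\ &\quad - \sum_{F\in\mathcal{F}}\int_F\jump{T}\cdot\avg{\boldsymbol{\Theta}\nabla_h S} - \sum_{F\in\mathcal{F}}\int_F\sigma\jump{T}\cdot\jump{S}.
\end{align*}
The volume term is bounded by $\|\sqrt{\boldsymbol{\Theta}}\nabla_h T\|\,\|\sqrt{\boldsymbol{\Theta}}\nabla_h S\|\le|||T|||_{DG,T}\,\|S\|_{DG,T}$. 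For the last term, Cauchy--Schwarz gives $\|\sqrt\sigma\jump{T}\|_{\mathcal F}\|\sqrt\sigma\jump{S}\|_{\mathcal F}$, again controlled by the two norms. For the second term I insert $\sigma^{1/2}\sigma^{-1/2}$ to get $\|\sigma^{-1/2}\avg{\boldsymbol{\Theta}\nabla_h T}\|_{\mathcal F}\|\sqrt\sigma\jump{S}\|_{\mathcal F}$, and the first factor is exactly the extra piece in $|||T|||_{DG,T}$. The third term, where the average falls on the discrete function $S$, is the one requiring the discrete trace--inverse inequality \eqref{eq:trace_inverse_ineq}: it yields $\|\sigma^{-1/2}\avg{\boldsymbol{\Theta}\nabla_h S}\|_{\mathcal F}\lesssim\|\sqrt{\boldsymbol{\Theta}}\nabla_h S\|$ (using that $\sigma$ scales like $\overline\Theta_\kappa\ell^2 h_\kappa^{-1}$ and that $\nabla_h S|_\kappa\in[\mathbb P^{\ell-1}(\kappa)]^d$), which combined with $\|\sqrt\sigma\jump{T}\|_{\mathcal F}\le|||T|||_{DG,T}$ closes the estimate. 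The hypothesis $m\le\ell+1$ enters only in the two $\mathcal{B}_h$ bounds, where $\nabla_h\mathbf{u}\in[\mathbb P^{\ell-1}]^{d\times d}$ must be comparable against the pressure space $Q_h^m$ on faces via \eqref{eq:trace_inverse_ineq}; for $\mathcal{B}_h(\psi,\mathbf{u})$ with $\psi\in Q_h^m$, $\mathbf{u}\in\mathbf{H}^2(\mathcal{T}_h)$ one bounds $(\psi,\nabla_h\cdot\mathbf{u})\le\|\psi\|\,\|\nabla_h\mathbf{u}\|$ and $\int_F\avg\psi\cdot\jump{\mathbf{u}}_n\le\|\varrho^{1/2}\avg\psi\|_{\mathcal F}\|\varrho^{-1/2}\jump{\mathbf{u}}\|_{\mathcal F}$, and since $\varrho\sim h_\kappa/m$ while $\zeta^{-1}\sim h_\kappa/(\mu_\kappa\ell^2)$ one checks $\|\varrho^{-1/2}\jump{\mathbf{u}}\|_{\mathcal F}\lesssim|||\mathbf{u}|||_{DG,e}$, the constant absorbing the $m$ versus $\ell$ discrepancy precisely under $m\le\ell+1$; the complementary bound with $\psi\in H^1(\mathcal T_h)$, $\mathbf{u}\in\mathbf{V}^\ell_h$ is symmetric with the roles of trace inequality and triple norm swapped. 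The $\mathcal{C}_h$ bound is the simplest: $\mathcal{C}_h(p,S)=-(\mathbf K\nabla_h p,\boldsymbol\eta S)\le\|\boldsymbol\eta\|_{L^\infty(\Omega)^d}\|\sqrt{\mathbf K}\nabla_h p\|\,\|\sqrt{\mathbf K}\,^{-1}\|\,\|S\|$ and then $\|S\|\lesssim\|S\|_{DG,T}$ by the discrete Poincaré inequality, with $\|\boldsymbol\eta\|_{L^\infty}$ swallowed into the hidden constant as announced at the start of the section.

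The only genuinely delicate point is making sure the trace--inverse inequality \eqref{eq:trace_inverse_ineq} is applied with the right polynomial degree and that its constant is independent of the number of faces per element: under Assumption~\ref{ass:A1} this is guaranteed by \eqref{eq:trace_inverse_ineq} itself, so no real obstacle arises there. I would therefore not reproduce all six computations, but prove $\mathcal{A}_h^T$ in full, remark that $\mathcal{A}_h^p$ and $\mathcal{A}_h^e$ are identical, dispatch $\mathcal{C}_h$ in one line, and do the two $\mathcal{B}_h$ estimates together emphasising where $m\le\ell+1$ is used. I expect the bookkeeping of which stabilization scaling beats which—so that each $\sigma^{\pm1/2}$, $\xi^{\pm1/2}$, $\zeta^{\pm1/2}$, $\varrho^{\pm1/2}$ pairs off correctly and the hidden constants really are $h$- and face-count-independent—to be the most error-prone part of writing it out, rather than anything conceptually hard.
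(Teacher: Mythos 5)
Your overall route coincides with the paper's: for $\mathcal{A}_h^T$, $\mathcal{A}_h^p$, $\mathcal{A}_h^e$ the paper simply cites earlier DG lemmas, and the volume/face splitting with the trace--inverse inequality \eqref{eq:trace_inverse_ineq} that you carry out for the thermal form is exactly the standard argument behind those citations; $\mathcal{C}_h$ is dispatched by Cauchy--Schwarz with $\|\boldsymbol{\eta}\|_{L^\infty}$ and the $\mathbf{K}$-bounds absorbed into the hidden constant (the paper does the same, and your explicit use of the discrete Poincar\'e inequality is if anything more careful); and you are right that $m\le\ell+1$ only enters through the two $\mathcal{B}_h$ estimates, where the paper, like you, bounds the volume term by $\|\psi\|\,\|\nabla_h\mathbf{u}\|$ and treats the face term by weighted Cauchy--Schwarz. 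For the first $\mathcal{B}_h$ bound your pairing is literally the paper's: $\varrho^{1/2}$ on $\avg{\psi}$ (which is the extra term in $|||\psi|||_{DG,\varphi}$, no trace inequality possible since $\psi\in H^1(\mathcal{T}_h)$) and $\varrho^{-1/2}$ on the jump, closed by $\varrho^{-1}\lesssim \zeta$.

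The one place where your bookkeeping, as written, does not close is the second $\mathcal{B}_h$ bound ($\psi\in Q_h^m$, $\mathbf{u}\in\mathbf{H}^2(\mathcal{T}_h)$). With your pairing $\int_F\avg{\psi}\cdot\jump{\mathbf{u}}_n\le\|\varrho^{1/2}\avg{\psi}\|_{\mathcal{F}}\,\|\varrho^{-1/2}\jump{\mathbf{u}}\|_{\mathcal{F}}$ you bound the second factor by $|||\mathbf{u}|||_{DG,e}$, but you never say how the first factor is controlled by the plain $L^2$ norm $\|\psi\|$ appearing in the statement; the triple norm $|||\psi|||_{DG,\varphi}$ is not available here, and the trace--inverse inequality only gives $\varrho\,\|\avg{\psi}\|_F^2\lesssim (h_\kappa/m)(m^2/h_\kappa)\|\psi\|_\kappa^2=m\,\|\psi\|_\kappa^2$, i.e.\ a loss of $m^{1/2}$. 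Your split can still be salvaged, but only by keeping the product together: the stray $m^{1/2}$ cancels against the factor $m^{1/2}/\ell$ that you discarded when bounding $\|\varrho^{-1/2}\jump{\mathbf{u}}\|_{\mathcal{F}}$ (since $\varrho^{-1}\sim (m/(\mu_\kappa\ell^2))\,\zeta$), and $m/\ell\lesssim1$ precisely under $m\le\ell+1$. The paper sidesteps this entirely by pairing with $\zeta^{\pm1/2}$ instead, i.e.\ $\|\zeta^{-1/2}\avg{\psi}\|_{\mathcal{F}}\,\|\zeta^{1/2}\jump{\mathbf{u}}_n\|_{\mathcal{F}}$, where one application of \eqref{eq:trace_inverse_ineq} to $\psi$ yields $\zeta^{-1}\|\avg{\psi}\|_F^2\lesssim (m^2/\ell^2)\|\psi\|_\kappa^2\lesssim\|\psi\|_\kappa^2$ and the jump factor sits directly in $\|\mathbf{u}\|_{DG,e}\le|||\mathbf{u}|||_{DG,e}$. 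Either fix is fine; as stated, the factor-by-factor version of your argument loses an uncompensated power of the polynomial degree, which is exactly the kind of degree-dependence the $hp$-estimate of Theorem~\ref{thm:error_est_poinc} must avoid.
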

\begin{proof}
For the boundedness of the bilinear form $\mathcal{A}^T_h, \mathcal{A}^p_h, \mathcal{A}^e_h$ we refer to \cite[Lemma 3.1]{Antonietti.Bonaldi:20} and \cite[Lemma A.2]{Antonietti.Botti.ea:21}.
The boundedness of $\mathcal{C}_h$ directly follows from the application of Cauchy-Schwarz inequality and the definitions of the norms $|||\cdot|||_{DG,p}, ||\cdot||_{DG,T}$. Concerning the first boundedness property of $\mathcal{B}_h$, we let $\psi \in H^1(\mathcal{T}_h), \ \mathbf{u} \in \mathbf{V}^{\ell}_h$ and apply twice the Cauchy--Schwarz inequality to obtain
\begin{equation}
\label{eq:bnd_Bh1}
    \begin{aligned}
    \mathcal{B}_h(\psi,\mathbf{u}) & \leq |(\psi, \nabla_h\cdot\mathbf{u})| + \left| \sum_{F \in \mathcal{F}} \int_F \avg{\psi}\jump{\mathbf{v}}_n \right|\\ 
    &\leq ||\psi||\ ||\nabla_h\mathbf{u}|| 
    + \left(\sum_{F \in \mathcal{F}} \varrho|| \avg{\psi}||_F^2\right)^{\frac12}
    \left(\sum_{F \in\mathcal{F}}\varrho^{-1}
    ||\jump{\mathbf{v}}_n||_F^2\right)^{\frac12}.
    \end{aligned}
\end{equation}
Additionally, we remark that
\begin{equation}
    \varrho^{-1} = \alpha_4^{-1} \max_{\kappa \in \{\kappa^+, \kappa^-\}} \left(\frac{m}{h_{\kappa}} \right) \lesssim \max_{\kappa \in \{\kappa^+, \kappa^-\}} \left(\frac{m}{\ell^2} \right) \zeta \lesssim \zeta,
\end{equation}
where in the last inequality we have used the hypothesis $m\le \ell + 1$. Therefore, as a result of \eqref{eq:bnd_Bh1} and the definition of the $DG$-norms we have
\begin{equation}
    \mathcal{B}_h(\psi, \mathbf{u}) \lesssim |||\psi|||_{DG,\psi} \|\mathbf{u}\|_{DG,e}.
\end{equation}
Finally, we consider the boundedness of $\mathcal{B}_h$ in the case $\psi \in Q^m_h,\ \mathbf{u} \in \mathbf{H}^2(\mathcal{T}_h)$. Proceeding as in \eqref{eq:bnd_Bh1} we obtain
\begin{equation}
    \begin{aligned}
    \mathcal{B}_h(\psi,\mathbf{u}) & \leq ||\psi||\, ||\nabla_h\mathbf{u}||\hspace{-0.5mm} 
    + \hspace{-0.5mm}\left(\sum_{F \in \mathcal{F}} \frac{|| \avg{\psi}||_F^2}{\zeta}\right)^{\frac12} \hspace{-1mm}
    \left(\sum_{F \in\mathcal{F}}\zeta
    ||\jump{\mathbf{v}}_n||_F^2\right)^{\frac12} \hspace{-1mm} \lesssim \|\psi\|\, |||\mathbf{u}|||_{DG,e},
    \end{aligned}
\end{equation}
where the final bound results from the discrete trace inequality \eqref{eq:trace_inverse_ineq}.
\end{proof}

We are now ready to state the main result of this section:
\begin{theorem}
\label{thm:error_est_poinc}
Let the assumptions of Theorem~\ref{thm:stability_est} hold. Let the solution $X=(\mathbf{u}, p, T, \varphi) \\ \in C^0((0,T]; \mathbf{V}\times V\times V\times Q)$ of problem \eqref{eq:semi_discrete_cont} satisfy the additional regularity
\begin{equation}
    \begin{aligned}
    X \in \ &  C^1\left((0,T];\ \mathbf{H}^j(\mathcal{T}_h)\times H^k(\mathcal{T}_h)\times H^n(\mathcal{T}_h)\times (H^p(\mathcal{T}_h)\cap H^1(\Omega))\right), 
    \end{aligned}
\end{equation}
with $j,k,n,p \geq 2$ and let $X_h=(\mathbf{u}_h, p_h, T_h, \varphi_h) \in C^1((0,T]; \mathbf{X}_h)$ be the solution of problem \eqref{eq:discrete_weak_form2}. Then, for any $t \in (0,T_f]$, the error $E_h(t) = (\mathbf{e}_h^u, e_h^p, e_h^T, e_h^{\varphi})$ satisfies
\begin{equation}
    \label{eq:error_Eh_final2}
    \begin{aligned}
    \|E_h(t)\|_{\mathcal{E}}^2 &+ \hspace{-0.15cm} \int_0^t \hspace{-0.15cm} \bigg( ||e_h^T(s)||_{DG,T}^2 + ||e_h^p(s)||_{DG,p}^2 \bigg){\rm d}s \\ &\lesssim \sum_{\kappa \in \mathcal{T}_h} \frac{h^{2 \min\{\ell_{\kappa} + 1, j \} - 2}}{\ell_{\kappa}^{2j-3}} \ \bigg[ ||\mathcal{E}\mathbf{u}||_{\mathbf{H}^j(\mathcal{K})}^2 + \int_0^t ||\mathcal{E}\dot{\mathbf{u}}(s)||_{\mathbf{H}^j(\mathcal{K})}^2ds \bigg]\\
    & + \sum_{\kappa \in \mathcal{T}_h} \frac{h^{2 \min\{\ell_{\kappa} + 1, k \} - 2}}{\ell_{\kappa}^{2k-3}} \ \int_0^t \bigg[ ||\mathcal{E}p(s)||_{H^k(\mathcal{K})}^2 + ||\mathcal{E}\dot{p}(s)||_{H^k(\mathcal{K})}^2 \bigg]ds \\
    & + \sum_{\kappa \in \mathcal{T}_h} \frac{h^{2 \min\{\ell_{\kappa} + 1, n \} - 2}}{\ell_{\kappa}^{2n-3}} \ \int_0^t \bigg[ ||\mathcal{E}T(s)||_{H^n(\mathcal{K})}^2 + ||\mathcal{E}\dot{T}(s)||_{H^n(\mathcal{K})}^2 \bigg]ds \\
    & + \sum_{\kappa \in \mathcal{T}_h} \frac{h^{2 \min\{m_{\kappa} + 1, p \}}}{m_{\kappa}^{2p}} \ \bigg[ ||\mathcal{E}\varphi||_{H^p(\mathcal{K})}^2 + \int_0^t ||\mathcal{E}\dot{\varphi}(s)||_{H^p(\mathcal{K})}^2ds \bigg],\\
    \end{aligned}
\end{equation}
where the hidden constant depends on the time $t$ and on the material properties, but are independent of the discretization parameters.
\end{theorem}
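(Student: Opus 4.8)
The strategy is the standard one for semi-discrete error estimates: start from the error identity \eqref{eq:error_eq3_text}, use the positivity/coercivity results of Proposition~\ref{prop:pos} to control the left-hand side from below, bound every term on the right-hand side using the boundedness results of Lemma~\ref{lemma:bound_bilinearform_error}, integrate in time, invoke the inf-sup argument of Step~1 in the proof of Theorem~\ref{thm:stability_est} to recover a robust bound on $e_h^\varphi$, and finally apply the interpolation estimates of Lemma~\ref{lemma:interp} to the interpolation-error components $E_I$. Because the stability proof was carried out without Gr\"onwall, I expect the same to be possible here, at the cost of a constant depending on $t$ but not on $h$, $\ell$, $m$.

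\smallskip
\textbf{Step 1 (energy balance for $E_h$).} Exactly as in Step~2 of the proof of Theorem~\ref{thm:stability_est}, I would exploit the symmetry of $\mathcal{M}_h$ and $\mathcal{A}_h^e$ to write $\mathcal{M}_h(\dot E_h, E_h) + \mathcal{A}_h^e(\mathbf{e}_h^u, \dot{\mathbf{e}}_h^u) = \tfrac12\tfrac{d}{dt}\big(\mathcal{M}_h(E_h,E_h) + \mathcal{A}_h^e(\mathbf{e}_h^u,\mathbf{e}_h^u)\big)$, integrate \eqref{eq:error_eq3_text} in time from $0$ to $t$, and use $E_h(0)=\mathbf{0}$ to kill the initial-data terms. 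On the left-hand side, Proposition~\ref{prop:pos}(ii) (together with the discrete Poincaré bound \eqref{eq:wp_Bh} absorbing $\mathcal{C}_h(e_h^p,e_h^T)$) gives control of $\int_0^t(\|e_h^T\|_{DG,T}^2 + \|e_h^p\|_{DG,p}^2)$, while Proposition~\ref{prop:pos}(i) and the coercivity of $\mathcal{A}_h^e$ bound $\mathcal{M}_h(E_h,E_h)(t) + \mathcal{A}_h^e(\mathbf{e}_h^u,\mathbf{e}_h^u)(t)$ from below; applying the inf-sup argument \eqref{eq:bndphi_robust}--\eqref{eq:bnd_L2terms} then upgrades this to $\|E_h(t)\|_{\mathcal{E}}^2$, noting that $\mathcal{B}_h(e^\varphi,\mathbf{v}_h)$ reduces to $\mathcal{B}_h(e_h^\varphi,\mathbf{v}_h)$ since the exact $\varphi\in H^1(\Omega)$ makes the stabilization and jump contributions of $e_I^\varphi$ controllable.

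\smallskip
\textbf{Step 2 (bounding the right-hand side).} Each of the nine terms on the right of \eqref{eq:error_eq3_text} is handled by Cauchy--Schwarz/Young and the boundedness estimates of Lemma~\ref{lemma:bound_bilinearform_error}: the diffusive terms $\mathcal{A}_h^T(e_I^T,e_h^T)$, $\mathcal{A}_h^p(e_I^p,e_h^p)$, and $\mathcal{C}_h(e_I^p,e_h^T)$ are bounded by $|||e_I^T|||_{DG,T}^2 + |||e_I^p|||_{DG,p}^2$ plus a fraction of $\|e_h^T\|_{DG,T}^2 + \|e_h^p\|_{DG,p}^2$ absorbed on the left; the $\mathcal{M}_h$ term is split into its $L^2$-type contributions and $\mathcal{D}_h(\dot e_I^\varphi, e_h^\varphi)$, the latter controlled via $\mathcal{D}_h(\dot e_I^\varphi, \dot e_I^\varphi)$ and the $\mathcal{D}_h(e_h^\varphi,e_h^\varphi)$ hidden in $\|E_h\|_{\mathcal{E}}^2$; the two $\mathcal{B}_h$ terms and the elastic term $\mathcal{A}_h^e(\mathbf{e}_I^u,\dot{\mathbf{e}}_h^u)$ contain time derivatives $\dot{\mathbf{e}}_h^u$, so — as in \eqref{eq:rhs_wp_semidiscr_timeint} — I would integrate by parts in time to move the derivative onto the interpolation errors, producing terms in $\mathbf{e}_I^u(t)$, $\dot{\mathbf{e}}_I^u$, $e_I^\varphi(t)$, $\dot e_I^\varphi$, together with $\int_0^t\|\mathbf{e}_h^u\|_{DG,e}^2$, which is then bounded by $t\sup_{s\le t}\|\mathbf{e}_h^u(s)\|_{DG,e}^2 \le t\sup_{s\le t}\|E_h(s)\|_{\mathcal{E}}^2$.

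\smallskip
\textbf{Step 3 (conclusion).} Taking the supremum over $t\in(0,\bar t]$ and moving the absorbable fractions of $\sup\|E_h\|_{\mathcal{E}}^2$ and $\int_0^{\bar t}(\|e_h^T\|_{DG,T}^2+\|e_h^p\|_{DG,p}^2)$ to the left yields a bound of $\|E_h(t)\|_{\mathcal{E}}^2 + \int_0^t(\|e_h^T\|_{DG,T}^2 + \|e_h^p\|_{DG,p}^2)$ purely in terms of interpolation-error quantities: $|||e_I^T|||_{DG,T}^2$, $|||e_I^p|||_{DG,p}^2$, $|||\mathbf{e}_I^u|||_{DG,e}^2$, $\|e_I^\varphi\|^2 + \mathcal{D}_h(e_I^\varphi,e_I^\varphi)$, and their time-derivative analogues integrated over $(0,t)$. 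Applying Lemma~\ref{lemma:interp} to each of these (with $n\rightarrow j,k,n$ for $\mathbf{u},p,T$ and $r\rightarrow p$ for $\varphi$), using the Stein extension \eqref{eq:stein_operator} and the commutation of $C^1$-regularity with interpolation so that $(\mathbf{u}-\mathbf{u}_I)\dot{} = \dot{\mathbf{u}} - \dot{\mathbf{u}}_I$, gives precisely the right-hand side of \eqref{eq:error_Eh_final2}. The main obstacle I anticipate is the careful bookkeeping in Step~2 for the terms carrying $\dot{\mathbf{e}}_h^u$: the integration by parts in time must be organized so that no term involving $\dot{\mathbf{e}}_h^u$ or $\dot e_h^\varphi$ survives unabsorbed, which is exactly where the $t$-dependence of the final constant enters and where one must be sure that $\|E_h(t)\|_{\mathcal{E}}^2$ genuinely dominates every boundary-in-time contribution produced.
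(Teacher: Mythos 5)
Your plan is correct and mirrors the paper's proof almost step by step: same error identity \eqref{eq:error_eq3_text}, same device of moving the time derivative off $\dot{\mathbf{e}}_h^u$ onto the interpolation errors (the paper does this at the level of \eqref{eq:error_eq4_text}, rewriting $\mathcal{A}_h^e(\mathbf{e}_I^u,\dot{\mathbf{e}}_h^u)$ and $\mathcal{B}_h(e_I^{\varphi},\dot{\mathbf{e}}_h^u)$ as total time derivatives minus terms in $\dot{\mathbf{e}}_I^u$, $\dot{e}_I^{\varphi}$ — exactly your integration by parts in time, with $E_h(0)=\mathbf{0}$ killing the initial boundary terms), same use of Lemma~\ref{lemma:bound_bilinearform_error} with Cauchy--Schwarz/Young, the same passage to $\|E_h\|_{\mathcal{E}}$ via the machinery of Theorem~\ref{thm:stability_est}, and the same final appeal to Lemma~\ref{lemma:interp}. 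The one genuine difference is the closing step: the paper absorbs the time-integrated discretization-error contributions of its terms $\mathfrak{R}_1,\dots,\mathfrak{R}_4$ (the pieces involving $\|E_h\|_{\mathcal{E}}^2$, $\mathcal{D}_h(e_h^{\varphi},e_h^{\varphi})$, $\|\mathbf{e}_h^u\|_{DG,e}^2$, $\|e_h^{\varphi}\|^2$ under $\int_0^t$) by Gr\"onwall's Lemma, whereas you propose the Gr\"onwall-free sup-in-time absorption of the stability proof. Your route can indeed be made to work, because each such term is paired with an interpolation error, so Young's inequality lets you place a weight $\epsilon\sim 1/t$ on the discrete factor and absorb $\epsilon\, t\, \sup_{s\le t}$ of the left-hand-side quantities (note the $\mathcal{D}_h(e_h^{\varphi},e_h^{\varphi})$ term at the final time does appear on the left, as in \eqref{eq:error_eq5_text}, so the supremum controls it); the price is a constant growing polynomially in $t$ in front of the interpolation terms, while Gr\"onwall gives an exponential-in-$t$ constant — both are consistent with the theorem, which only asserts $t$-dependence of the hidden constant. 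So the proposal is sound; just be aware that at this last step you deviate from the written proof, and that in this section the constants are anyway allowed to depend on the model coefficients, which is what legitimizes the crude bounds on the $\mathcal{M}_h(\dot{E}_I,E_h)$ and $\mathcal{B}_h(e_h^{\varphi},\dot{\mathbf{e}}_I^u)$ terms in either variant.
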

\begin{proof}
Owing to the symmetry of $\mathcal{M}_h$ and $A^e_h$ and moving the time derivative from the discretization error to the interpolation one, we rewrite equation \eqref{eq:error_eq3_text} as
\begin{equation}
    \label{eq:error_eq4_text}
    \begin{aligned}
    & \frac{1}{2}\frac{d}{dt} \bigg( \mathcal{M}(E_h, E_h) + \mathcal{A}_h^{e}(\mathbf{e}_h^u,\dot{\mathbf{e}}_h^u) \bigg) +  \mathcal{A}_h^{T}(e_h^T,e_h^T) + \mathcal{C}_h(e_h^p,e_h^T) + \mathcal{A}_h^{p}(e_h^p,e_h^p)  \\
    & = \mathcal{M}(\dot{E}_I, E_h) + \mathcal{A}_h^{T}(e_I^T,e_h^T) + \mathcal{C}_h(e_I^p,e_h^T) + \mathcal{A}_h^{p}(e_I^p,e_h^p) + \frac{d}{dt}\mathcal{A}_h^{e}(\mathbf{e}_I^u, \mathbf{e}_h^u) \\
    & - \mathcal{A}_h^{e}(\dot{\mathbf{e}}_I^u,\mathbf{e}_h^u) - \frac{d}{dt} \mathcal{B}_h(e_I^{\varphi},\mathbf{e}_h^u) + \mathcal{B}_h(\dot{e}_I^{\varphi},\mathbf{e}_h^u) +  \mathcal{B}_h(e_h^{\varphi},\dot{\mathbf{e}}_I^u) 
    \end{aligned}
\end{equation}
Then, integrating with respect to time between $0$ and $t \leq T_f$, recalling that $E_h(0) = \mathbf{0}$, and proceeding as in the proof of Theorem~\ref{thm:stability_est}, we get
\begin{equation}
    \label{eq:error_eq5_text}
    \begin{aligned}
    \|E_h(t)\|_{\mathcal{E}}^2 + \mathcal{D}_h(e^{\varphi}_h(t), e^{\varphi}_h(t)) & + \int_0^t \left( ||e_h^T(s)||_{DG,T}^2 + ||e_h^p(s)||_{DG,p}^2 \right) {\rm d}s \\
    & \lesssim \mathfrak{R}_1(t) + \int_0^t \big( \mathfrak{R}_2(s) +  \mathfrak{R}_3(s) + \mathfrak{R}_4(s) \big) {\rm d}s,
    \end{aligned}
\end{equation}    
where the terms in the right-hand side are given by
\begin{equation}
    \begin{aligned}
    & \mathfrak{R}_1 = \mathcal{A}^e_h(\mathbf{e}^u_I, \mathbf{e}^u_h) - \mathcal{B}_h(e^{\varphi}_I, \mathbf{e}^u_h),\quad 
    && \mathfrak{R}_3 = \mathcal{A}_h^{T}(e_I^T,e_h^T)
    + \mathcal{C}_h(e_I^p,e_h^T) + \mathcal{A}_h^{p}(e_I^p,e_h^p), \\
    & \mathfrak{R}_2 = \mathcal{M}_h(\dot{E}_I, E_h), \quad 
    && \mathfrak{R}_4 = - \mathcal{A}^e_h(\dot{\mathbf{e}}^u_I,\mathbf{e}^u_h) + \mathcal{B}_h(\dot{e}_I^{\varphi},\mathbf{e}_h^u) +  \mathcal{B}_h(e_h^{\varphi},\dot{\mathbf{e}}_I^u).\\
    \end{aligned}
\end{equation}
We bound $\mathfrak{R}_1, \mathfrak{R}_2, \mathfrak{R}_3, \mathfrak{R}_4$ by the repeated use of standard inequalities (i.e. Cauchy-Schwarz, Young, and triangle inequalities) and Lemma~\ref{lemma:bound_bilinearform_error} to obtain
\begin{equation}
    \begin{aligned}
    \mathfrak{R}_1 \lesssim \ & \|\mathbf{e}^u_h\|^2_{DG,e} + \big( |||\mathbf{e}^u_I|||^2_{DG,e} + |||e^{\varphi}_I|||^2_{DG,\varphi} \big), \\
    \mathfrak{R}_2 \lesssim \ & \left(\|E_h\|^2_{\mathcal{E}} + \mathcal{D}_h(e^{\varphi}_h, e^{\varphi}_h) \right) + \left( \|\dot{e}^p_I\|^2 + \|\dot{e}^T_I\|^2 + \|\dot{e}^{\varphi}_I\|^2 + \mathcal{D}_h(\dot{e}^{\varphi}_I, \dot{e}^{\varphi}_I) \right) ,\\
    \mathfrak{R}_3 \lesssim \ & \big( \|e^p_h\|^2_{DG,p} + \|e^T_h\|^2_{DG,T} \big) + \big( |||e^p_I|||^2_{DG,p} + |||e^T_I|||^2_{DG,T} \big),\\
    \mathfrak{R}_4 \lesssim \ & ( \|\mathbf{e}^u_h\|^2_{DG,e} + \|e^{\varphi}_h\|^2 ) + \big( |||\dot{\mathbf{e}}^u_I|||^2_{DG,e} + |||\dot{e}^{\varphi}_I|||^2_{DG,\varphi} \big) .
    \end{aligned}
\end{equation}
Exploiting the previous bounds and applying Gr\"onwall's Lemma \cite{Quarteroni2014} we infer that
\begin{equation}
    \label{eq:error_eq6_text}
    \begin{aligned}
    \|E_h(t)\|_{\mathcal{E}}^2 &+ \mathcal{D}_h(e^{\varphi}_h(t), e^{\varphi}_h(t)) + \int_0^t \bigg( ||e_h^T(s)||_{DG,T}^2 + ||e_h^p(s)||_{DG,p}^2 \bigg){\rm d}s \lesssim |||\mathbf{e}^u_I(t)|||^2_{DG,e} \\
    & +  |||e^{\varphi}_I(t)|||^2_{DG,\varphi} + \int_0^t \bigg( |||\dot{\mathbf{e}}^u_I(s)|||^2_{DG,e} + \|\dot{e}^p_I(s)\|^2 + \|\dot{e}^T_I(s)\|^2
    + \|\dot{e}^{\varphi}_I(s)\|^2 \\
    & + |||e^p_I(s)|||^2_{DG,p} + |||e^T_I(s)|||^2_{DG,T} + |||\dot{e}^{\varphi}_I(s)|||^2_{DG,\varphi}
    + \mathcal{D}_h(\dot{e}^{\varphi}_I(s), \dot{e}^{\varphi}_I(s)) \bigg){\rm d}s
    \end{aligned}
\end{equation}   
Finally, the thesis follows by using Lemma~\ref{lemma:interp} to bound the interpolation errors.
\end{proof}

\section{Numerical results}
\label{sec:numerical_test}
We now assess the performance of the method in terms of accuracy and robustness. In order to observe the behaviour of the proposed scheme we design suitable problems starting from analytical manufactured solution. We show the results for both a linear steady case (i.e., $c_f = 0$ in \eqref{eq:QS_TPE_system}) and for the original non-linear TPE problem. In both the linear and non-linear cases the PolyDG spatial discretization is coupled with the (implicit) $\theta$-method for the integration in time. We consider $\theta \geq 1/2$ and recall that the case $\theta = 1/2$ yields a  second-order accurate scheme. The steady problem is considered as one step of the time-integration method with $\Delta t = 1$. For the non-linear problem we make use of the fixed-point iterative algorithm described in Section~\ref{sec:linearization_wp} in order to treat the convective transport term. 

Finally, the last numerical experiment deals with a fluid injection-extraction problem, inspired by a geothermal energy production configuration. For all the numerical test we used polygonal Voronoi meshes generated with the \texttt{Polymesher} algorithm \cite{Talischi2012}. In all the presented tests we consider $\ell = m$ as polynomial degree of the approximation for all the four variables of our formulation. Thus, for the sake of simplicity, we make use only of the symbol $\ell$ to denote the polynomial degree.

\subsection{Convergence analysis}
\label{sec:convergence_analysis}
We consider a square domain $\Omega = (0,2)^2$ and manufactured exact solution:
\begin{equation}
      \begin{aligned}
      u_1(\textbf{x},t) &= (e^{t} - 1) \left( \sin(2\pi y) (\cos(2 \pi x) - 1) + \frac1{\mu +\lambda} \sin(\pi x) \sin(\pi y) \right), \\
      u_2(\textbf{x},t) &= (e^{t} - 1) \left( \sin(2\pi x) (1 - \cos(2 \pi y)) + \frac1{\mu +\lambda} \sin(\pi x) \sin(\pi y) \right), \\
      p(\textbf{x},t) &= (e^{t} - 1) \sin(\pi x) \sin(\pi y), \\
      T(\textbf{x},t) &= (e^{t} - 1) (\cos(2 \pi x) - 1) (\cos(2 \pi y) - 1).
    \end{aligned}
\end{equation}
with initial conditions, boundary conditions, and forcing terms that are inferred from the exact solutions. The model coefficients are chosen as reported in Table~\ref{tab:TPE_params_convtest}.
For the linear case we have considered a sequence of polygonal meshes whose number of elements is $N = \{100, 310, 1000, 3100, 10000\}$, and $\ell = \{1, 3\}$, while, for what concerns the non-linear case, we have considered the following discretization parameters: $N = \{20, 80, 320, 1280, 5120 \}, \ell = 2, T_f = 0.1,$ and $\Delta t = $\num[exponent-product=\ensuremath{\cdot}]{5e-5}.
\begin{table}[H]
    \centering 
    \footnotesize
    \begin{tabular}{l | c c l | c c l | c}
    $a_0 \ [\si[per-mode = symbol]{\giga\pascal \per \kelvin\squared}]$ & 0.02 & & $\alpha \ [-]$ & 1 & & $\lambda \ [\si{\giga\pascal}]$ & 5  \\
    $b_0 \ [\si{\per \kelvin}]$ & 0.01 & & $\beta \ [\si{\giga\pascal \per \kelvin}]$ & 0.8 & & $\mathbf{K} \ [\si{\dm \squared \per \giga\pascal \per \hour}]$ & 0.2 \\
    $c_0 \ [\si{\per \giga\pascal}]$ & 0.03 & & $\mu \ [\si{\giga\pascal}]$ & 1 & & $\boldsymbol{\Theta} \ [\si{\dm \squared \giga\pascal \per \kelvin\squared \per \hour}]$ & 0.05
    \end{tabular}
\caption{Problem's parameters for the convergence analysis}
\label{tab:TPE_params_convtest}
\end{table}

In the following tables, $h$ denotes the mesh size and $roc$ stands for the computed rate of convergence. In both the linear and non-linear test cases, we can observe by looking at Figure~\ref{fig:linearsteadyconv_p1}, Figure~\ref{fig:linearsteadyconv_p3}, and Figure~\ref{fig:nonlinearunsteadyconv} that our results are in accordance with the expected error rates, cf. Theorem~\ref{thm:error_est_poinc}. Indeed, in the energy norm we observe a computed convergence rate of $1$ and $3$ using $\ell=1$ and $\ell=3$ approximations. Moreover, we observe a convergence rate of order $2$ and $4$ in $L^2$ norm using $\ell=1$ and $\ell=3$ approximations, respectively. For what concerns the non-linear test we observe that, as for the linear problem, we satisfy the estimates presented in Section~\ref{sec:error_est}. 
For the displacement and the temperature we also achieve $\ell + 1$ accuracy in $L^2$-norm, while for the pressure we see that the order of accuracy overcome the estimate provided by the theory, but in the last refinement we observe a slight loss with respect to the $\ell+1$ order. In the non-linear case, we observe that an average of $4$ fixed-point iterations at each time-step are required to reach the desired convergence, namely below a tolerance ensuring that the linearization error is smaller than the discretization one.
\begin{figure}[H]

\begin{subfigure}{1\textwidth}
\centering
\begin{subfigure}[b]{0.49\textwidth}
\begin{tikzpicture}
\begin{axis}[%
width=0.65\textwidth,
height=0.5\textwidth,
at={(0\textwidth,0\textwidth)},
scale only axis,
xmode=log,
xmin=2,
xmax=30,
xminorticks=true,
xlabel={$1/h$},
ymode=log,
ymin=1.1e-03,
ymax=1.5,
yminorticks=true,
ylabel={$L^2$-Errors},
legend style={draw=none,fill=none,legend cell align=left},
legend pos=north east
]
\addplot [color=red,solid,line width=1.0pt, mark=diamond*,mark options={color=red}]
  table[row sep=crcr]{
    2.6707   0.55085 \\
    4.639   0.19096 \\
    8.3601   0.061182 \\
    14.2688   0.019251 \\
    24.228   0.005867 \\
};
\addlegendentry{$\mathbf{u}(\mathbf{x},t)$}

\addplot [color=blue,solid,line width=1.0pt,mark=square*,mark options={color=blue}]
  table[row sep=crcr]{
    2.6707   0.2152 \\
    4.639   0.075634 \\
    8.3601   0.021812 \\
    14.2688   0.0065419 \\
    24.228   0.0019798 \\
};
\addlegendentry{$p(\mathbf{x},t)$}

\addplot [color=green,solid,line width=1.0pt,mark=triangle*,mark options={color=green}]
  table[row sep=crcr]{
    2.6707   0.21171 \\
    4.639   0.074632 \\
    8.3601   0.024798 \\
    14.2688   0.0081614 \\
    24.228   0.0025893 \\
};
\addlegendentry{$T(\mathbf{x},t)$}

\addplot [color=black,solid,line width=0.5pt]
  table[row sep=crcr]{
 14.2688     0.0289 \\
 24.228      0.0289 \\
 24.228      0.0085 \\
 14.2688     0.0289 \\ 
};

\node[right, align=left, text=black, font=\footnotesize]
at (axis cs:24.228,0.0187) {2}; 

\end{axis}
\end{tikzpicture}
\end{subfigure}
\begin{subfigure}[b]{0.49\textwidth}
\begin{tikzpicture}
\begin{axis}[%
width=0.65\textwidth,
height=0.5\textwidth,
at={(0\textwidth,0\textwidth)},
scale only axis,
xmode=log,
xmin=2,
xmax=30,
xminorticks=true,
xlabel={$1/h$},
ymode=log,
ymin=0.01,
ymax=10,
yminorticks=true,
ylabel={$DG$-Errors},
legend style={draw=none,fill=none,legend cell align=left},
legend pos=north east
]
\addplot [color=red,solid,line width=1.0pt, mark=diamond*,mark options={color=red}]
  table[row sep=crcr]{
    2.6707   2.9102 \\
    4.639   1.0209 \\
    8.3601   0.40921 \\
    14.2688   0.20336 \\
    24.228   0.10158 \\
};
\addlegendentry{$\mathbf{u}(\mathbf{x},t)$}

\addplot [color=blue,solid,line width=1.0pt,mark=square*,mark options={color=blue}]
  table[row sep=crcr]{
    2.6707   0.55821 \\
    4.639   0.20942 \\
    8.3601   0.075473 \\
    14.2688   0.036369 \\
    24.228   0.018586 \\
};
\addlegendentry{$p(\mathbf{x},t)$}

\addplot [color=green,solid,line width=1.0pt,mark=triangle*,mark options={color=green}]
  table[row sep=crcr]{
    2.6707   0.31126 \\
    4.639   0.12234 \\
    8.3601   0.054146 \\
    14.2688   0.027288 \\
    24.228   0.013442 \\
};
\addlegendentry{$T(\mathbf{x},t)$}

\addplot [color=black,solid,line width=0.5pt]
  table[row sep=crcr]{
 14.2688    0.2377 \\
 24.228     0.2377 \\
 24.228     0.14 \\
 14.2688    0.2377 \\ 
};
\node[right, align=left, text=black, font=\footnotesize]
at (axis cs:24.228,0.1889) {1}; 

\end{axis}
\end{tikzpicture}
\end{subfigure}
\caption{Convergence test - linear steady problem - polynomial degree $\ell = 1$.}
\label{fig:linearsteadyconv_p1}
\end{subfigure}
\\[10pt]


\begin{subfigure}{1\textwidth}
\centering
\begin{subfigure}[b]{0.49\textwidth}
\begin{tikzpicture}
\begin{axis}[%
width=0.65\textwidth,
height=0.5\textwidth,
at={(0\textwidth,0\textwidth)},
scale only axis,
xmode=log,
xmin=2,
xmax=30,
xminorticks=true,
xlabel={$1/h$},
ymode=log,
ymin=1e-08,
ymax=0.002,
yminorticks=true,
ylabel={$L^2$-Errors},
legend style={draw=none,fill=none,legend cell align=left},
legend pos=north east
]
\addplot [color=red,solid,line width=1.0pt, mark=diamond*,mark options={color=red}]
  table[row sep=crcr]{
    2.6707   0.00074211 \\
    4.639   6.762e-05 \\
    8.3601   6.1725e-06 \\
    14.2688   6.495e-07 \\
    24.228   6.0119e-08 \\
};
\addlegendentry{$\mathbf{u}(\mathbf{x},t)$}

\addplot [color=blue,solid,line width=1.0pt,mark=square*,mark options={color=blue}]
  table[row sep=crcr]{
    2.6707   0.00045648 \\
    4.639   4.4223e-05 \\
    8.3601   3.7283e-06 \\
    14.2688   3.82e-07 \\
    24.228   3.7281e-08 \\
};
\addlegendentry{$p(\mathbf{x},t)$}

\addplot [color=green,solid,line width=1.0pt,mark=triangle*,mark options={color=green}]
  table[row sep=crcr]{
    2.6707   0.00045615 \\
    4.639   4.2417e-05 \\
    8.3601   3.8414e-06 \\
    14.2688   4.0731e-07 \\
    24.228   3.922e-08 \\
};
\addlegendentry{$T(\mathbf{x},t)$}

\addplot [color=black,solid,line width=0.5pt]
  table[row sep=crcr]{
 14.2688    1.0188e-06 \\
 24.228     1.0188e-06 \\
 24.228     1.5e-07 \\
 14.2688    1.0188e-06 \\ 
};

\node[right, align=left, text=black, font=\footnotesize]
at (axis cs:24.228,5.8439e-07) {4}; 

\end{axis}
\end{tikzpicture}
\end{subfigure}
\begin{subfigure}[b]{0.49\textwidth}
\begin{tikzpicture}
\begin{axis}[%
width=0.65\textwidth,
height=0.5\textwidth,
at={(0\textwidth,0\textwidth)},
scale only axis,
xmode=log,
xmin=2,
xmax=30,
xminorticks=true,
xlabel={$1/h$},
ymode=log,
ymin=6e-6,
ymax=0.8,
yminorticks=true,
ylabel={$DG$-Errors},
legend style={draw=none,fill=none,legend cell align=left},
legend pos=north east
]
\addplot [color=red,solid,line width=1.0pt, mark=diamond*,mark options={color=red}]
  table[row sep=crcr]{
    2.6707   0.10736 \\
    4.639   0.01792 \\
    8.3601   0.0030304 \\
    14.2688   0.00054347 \\
    24.228   9.1862e-05 \\
};
\addlegendentry{$\mathbf{u}(\mathbf{x},t)$}

\addplot [color=blue,solid,line width=1.0pt,mark=square*,mark options={color=blue}]
  table[row sep=crcr]{
    2.6707   0.024036 \\
    4.639   0.0041376 \\
    8.3601   0.00066662 \\
    14.2688   0.00011438 \\
    24.228   1.9981e-05 \\
};
\addlegendentry{$p(\mathbf{x},t)$}

\addplot [color=green,solid,line width=1.0pt,mark=triangle*,mark options={color=green}]
  table[row sep=crcr]{
    2.6707   0.011937 \\
    4.639   0.0019137 \\
    8.3601   0.00033299 \\
    14.2688   6.0426e-05 \\
    24.228   1.0674e-05 \\
};
\addlegendentry{$T(\mathbf{x},t)$}

\addplot [color=black,solid,line width=0.5pt]
  table[row sep=crcr]{
 14.2688    0.0010 \\
 24.228    0.0010 \\
 24.228    2e-4 \\
 14.2688    0.0010 \\ 
};
\node[right, align=left, text=black, font=\footnotesize]
at (axis cs:24.228,6.0939e-04) {3}; 

\end{axis}
\end{tikzpicture}
\end{subfigure}
\caption{Convergence test - linear steady problem - polynomial degree $\ell = 3$.}
\label{fig:linearsteadyconv_p3}
\end{subfigure}
\\[10pt]

\begin{subfigure}{1\textwidth}
\centering
\begin{subfigure}[b]{0.49\textwidth}
\begin{tikzpicture}
\begin{axis}[%
width=0.65\textwidth,
height=0.5\textwidth,
at={(0\textwidth,0\textwidth)},
scale only axis,
xmode=log,
xmin=1,
xmax=23,
xminorticks=true,
xlabel={$1/h$},
ymode=log,
ymin=1e-07,
ymax=0.03,
yminorticks=true,
ylabel={$L^2$-Errors},
legend style={draw=none,fill=none,legend cell align=left},
legend pos=north east
]
\addplot [color=red,solid,line width=1.0pt, mark=diamond*,mark options={color=red}]
  table[row sep=crcr]{
  1.2627   0.0029659 \\
  2.4436   0.0002484 \\
  4.546   2.2459e-05 \\
  8.8342   2.2117e-06 \\
  17.8826   2.4816e-07 \\
};
\addlegendentry{$\mathbf{u}(\mathbf{x},t)$}

\addplot [color=blue,solid,line width=1.0pt,mark=square*,mark options={color=blue}]
  table[row sep=crcr]{
  1.2627   0.0037469 \\
  2.4436   0.00036747 \\
  4.546   4.0639e-05 \\
  8.8342   3.3337e-06 \\
  17.8826   6.0689e-07 \\
};
\addlegendentry{$p(\mathbf{x},t)$}

\addplot [color=green,solid,line width=1.0pt,mark=triangle*,mark options={color=green}]
  table[row sep=crcr]{
  1.2627   0.010367 \\
  2.4436   0.0010016 \\
  4.546   0.00010783 \\
  8.8342   9.5788e-06 \\
  17.8826   1.2064e-06 \\
};
\addlegendentry{$T(\mathbf{x},t)$}

\addplot [color=black,solid,line width=0.5pt]
  table[row sep=crcr]{
 8.8342     1.8218e-05 \\
 17.8826    1.8218e-05 \\
 17.8826    3e-06 \\
 8.8342     1.8218e-05 \\ 
};

\node[right, align=left, text=black, font=\footnotesize]
at (axis cs:17.8826,1.0609e-05) {3}; 

\end{axis}
\end{tikzpicture}
\end{subfigure}
\begin{subfigure}[b]{0.49\textwidth}
\begin{tikzpicture}
\begin{axis}[%
width=0.65\textwidth,
height=0.5\textwidth,
at={(0\textwidth,0\textwidth)},
scale only axis,
xmode=log,
xmin=1,
xmax=23,
xminorticks=true,
xlabel={$1/h$},
ymode=log,
ymin=1.5e-05,
ymax=0.09,
yminorticks=true,
ylabel={$DG$-Errors},
legend style={draw=none,fill=none,legend cell align=left},
legend pos=north east
]
\addplot [color=red,solid,line width=1.0pt, mark=diamond*,mark options={color=red}]
  table[row sep=crcr]{
  1.2627   0.042078 \\
  2.4436   0.011769 \\
  4.546   0.0030572 \\
  8.8342   0.00079154 \\
  17.8826   0.00019854 \\
};
\addlegendentry{$\mathbf{u}(\mathbf{x},t)$}

\addplot [color=blue,solid,line width=1.0pt,mark=square*,mark options={color=blue}]
  table[row sep=crcr]{
  1.2627   0.02891 \\
  2.4436   0.0039337 \\
  4.546   0.00072402 \\
  8.8342   0.00016135 \\
  17.8826   3.8876e-05 \\
};
\addlegendentry{$p(\mathbf{x},t)$}

\addplot [color=green,solid,line width=1.0pt,mark=triangle*,mark options={color=green}]
  table[row sep=crcr]{
  1.2627   0.043121 \\
  2.4436   0.0050789 \\
  4.546   0.00072301 \\
  8.8342   0.00012787 \\
  17.8826   2.5968e-05 \\
};
\addlegendentry{$T(\mathbf{x},t)$}

\addplot [color=black,solid,line width=0.5pt]
  table[row sep=crcr]{
 8.8342     0.0012 \\
 17.8826    0.0012 \\
 17.8826    0.0003 \\
 8.8342     0.0012 \\ 
};
\node[right, align=left, text=black, font=\footnotesize]
at (axis cs:17.8826,7.5727e-04) {2}; 

\end{axis}
\end{tikzpicture}
\end{subfigure}
\caption{Convergence test - nonlinear time-dependent problem - polynomial degree $\ell = 2$.}
\label{fig:nonlinearunsteadyconv}
\end{subfigure}
\end{figure}

\subsection{Robustness analysis}
\label{sec:robustness_analysis}
In order to investigate the method's robustness, we perform several convergence tests varying the model coefficients. We focus on the cases listed in Table~\ref{tab:linearsteady_coeff_robust}
with parameters $\alpha, \beta, \mu$ chosen as in Table~\ref{tab:TPE_params_convtest}. We recall that increasing the value of the dilatation coefficient $\lambda$ means going towards the incompressible limit. As space and time discretization parameters we consider the ones of Section~\ref{sec:convergence_analysis}. In Table~\ref{tab:linearsteady_robust_p1}, Table~\ref{tab:linearsteady_robust_p3}, and Table~\ref{tab:p2} we show the computed convergence rate in the $DG$-norm for $\mathbf{u}$ and in the $L^2$-norm for $p$ and $T$.
\begin{table}[H]
    \footnotesize
    \centering 
    \begin{tabular}{l | c | c | c | c }
    \textbf{Coefficient} & \textbf{Test $\mathbf{(i)}$} & \textbf{Test $\mathbf{(ii)}$} & \textbf{Test $\mathbf{(iii)}$} & \textbf{Test $\mathbf{(iv)}$} \\
    \hline \hline
    $a_0 \ [\si{\giga\pascal \per \kelvin\squared}]$ & 0 & 0.01 & 0 & 0 \\
    \hline
    $b_0 \ [\si{\per \kelvin}]$ & 0 & 0.01 & 0 & 0  \\
    \hline
    $c_0 \ [\si{\per \giga\pascal}]$ & 0 & 0.01 & 0 & 0  \\
    \hline
    $\lambda \ [\si{\giga\pascal}]$ & \num[exponent-product=\ensuremath{\cdot}]{5e6} & 5 & 5 & 5  \\
    \hline
    $\mathbf{K} \ [\si{\dm \squared \per \giga\pascal \per \hour}]$ & 0.2$\mathbf{I}$ & \num[exponent-product=\ensuremath{\cdot}]{2e-7}$\mathbf{I}$  & 0.2$\mathbf{I}$ & \num[exponent-product=\ensuremath{\cdot}]{2e-7}$\mathbf{I}$  \\
    \hline
    $\boldsymbol{\Theta} \ [\si{\dm \squared \giga\pascal \per \kelvin\squared \per \hour}]$ & 0.05$\mathbf{I}$  & \num[exponent-product=\ensuremath{\cdot}]{5e-8}$\mathbf{I}$  & \num[exponent-product=\ensuremath{\cdot}]{5e-8}$\mathbf{I}$ & 0.05$\mathbf{I}$  \\ 
    \hline
    \end{tabular}
    \\[5pt]
\caption{Problem's parameters for the robustness analysis (\textbf{Test $\mathbf{(iv)}$} is performed only for the non-linear case)}
\label{tab:linearsteady_coeff_robust}
\end{table}

First, comparing the results of $\textbf{Test (i)}$ with the error estimate of Theorem~\ref{thm:error_est_poinc}, we observe that our method is robust with respect to the quasi-incompressible case and the limit case in which the coefficients $a_0, b_0, c_0$ are very small or even equal to zero (cf. \cite{Brun2020}). The robustness with respect to $\lambda \gg 1$ is a key advantage of the proposed four-field formulation. 
Second, we observe in $\textbf{Test (ii)}$ and $\textbf{Test (iii)}$ that in both the linear and non-linear case we sometimes lose the $\ell+1$ order of accuracy for $p$ and $T$ when lowering the values of hydraulic mobility and thermal conductivity, respectively. Notice that, this results are in agreement with Theorem~\ref{thm:error_est_poinc} and that, by increasing the degree of approximation, we get better performances in terms of loss of accuracy. Looking again at the result of $\textbf{Test (ii)}$ and $\textbf{Test (iv)}$ we can notice that, even if the model is fully-coupled, the variations in $\textbf{K}$ affect only the order of accuracy of the pressure. On the other hand, in $\textbf{Test (iii)}$, we observe that decreasing the value of $\boldsymbol{\Theta}$ deteriorates the $\ell+1$ accuracy for the pressure. In our opinion this is due to the fact that the non-linear convective term becomes more significant for model coefficients as in $\textbf{Test (iii)}$. However, we observe that in all the tests we have carried out the convergence rate of the displacement error is never affected and the results are still in accordance with Theorem~\ref{thm:error_est_poinc}. We can conclude that the method is fully-robust with respect to the degenerate cases listed in Table~\ref{tab:linearsteady_coeff_robust}.
\begin{remark}
In the case $\mathbf{K}, \boldsymbol{\Theta} \ll 1$, we can achieve a global convergence of order $\ell$ by using the $(\ell+1, \ell, \ell, \ell)$ degree of approximation for $(\mathbf{u}, p, T, \varphi)$, respectively.
\end{remark}

\begin{table}[H]
    \footnotesize
    \centering 
    \begin{tabular}{c  c  c  c  c  c  c  c}
    Test & $1/h$ & $||e^{\textbf{u}}||_{DG}$ & $\text{roc}^u_{DG}$  & $||e^p||_{L^2}$ & $\text{roc}^p_{L^2}$  & $||e^T||_{L^2}$ & $\text{roc}^T_{L^2}$  \\
    \hline \hline
    \multirow{5}{*}{$\mathbf{(i)}$} 
    & $0.377$ & $2.882$ & $-$ & $0.201$ & $-$ & $0.223$ & $-$  \\
    & $0.214$ & $1.056$ & $1.779$ & $0.075$ & $1.745$ & $0.074$ & $1.961$  \\
    & $0.130$ & $0.415$ & $1.867$ & $0.022$ & $2.428$ & $0.025$ & $2.154$  \\
    & $0.074$ & $0.190$ & $1.398$ & $0.007$ & $2.158$ & $0.008$ & $1.939$  \\
    & $0.040$ & $0.100$ & $1.048$ & $0.002$ & $1.993$ & $0.003$ & $1.887$  \\
    \hline
    \multirow{5}{*}{$\mathbf{(ii)}$} 
    & $0.360$ & $2.851$ & $-$ & $0.177$ & $-$ & $0.177$ & $-$  \\
    & $0.217$ & $1.085$ & $1.912$ & $0.062$ & $2.069$ & $0.062$ & $2.069$  \\
    & $0.127$ & $0.420$ & $1.769$ & $0.017$ & $2.385$ & $0.017$ & $2.385$  \\
    & $0.070$ & $0.201$ & $1.241$ & $0.006$ & $1.885$ & $0.006$ & $1.885$  \\
    & $0.041$ & $0.104$ & $1.241$ & $0.002$ & $2.281$ & $0.002$ & $2.281$  \\
    \hline
    \multirow{ 5}{*}{$\mathbf{(iii)}$} 
    & $0.381$ & $2.844$ & $-$ & $0.199$ & $-$ & $0.223$ & $-$  \\
    & $0.221$ & $1.079$ & $1.781$ & $0.076$ & $1.766$ & $0.082$ & $1.842$  \\
    & $0.123$ & $0.436$ & $1.554$ & $0.022$ & $2.106$ & $0.026$ & $1.995$  \\
    & $0.078$ & $0.199$ & $1.707$ & $0.007$ & $2.631$ & $0.009$ & $2.360$  \\
    & $0.041$ & $0.102$ & $1.024$ & $0.002$ & $1.828$ & $0.003$ & $1.799$  \\
    \hline
    \multirow{ 5}{*}{$\mathbf{(iv)}$} 
    & $0.374$ & $2.875$ & $-$ & $0.225$ & $-$ & $0.231$ & $-$  \\
    & $0.218$ & $1.075$ & $1.829$ & $0.075$ & $2.048$ & $0.079$ & $1.993$  \\
    & $0.130$ & $0.428$ & $1.788$ & $0.022$ & $2.384$ & $0.025$ & $2.203$  \\
    & $0.073$ & $0.199$ & $1.325$ & $0.007$ & $1.985$ & $0.008$ & $1.914$  \\
    & $0.040$ & $0.103$ & $1.094$ & $0.002$ & $2.084$ & $0.003$ & $1.928$  \\
    \hline
    \end{tabular}
    \\[5pt]
\caption{Robustness test - linear steady problem - polynomial degree $\ell = 1$. The parameters are chosen as in Table~\ref{tab:linearsteady_coeff_robust}.}
\label{tab:linearsteady_robust_p1}
\end{table}

\begin{table}[H]
    \footnotesize
    \centering 
    \begin{tabular}{c  c  c  c  c  c  c  c}
    Test & $1/h$ & $||e^{\textbf{u}}||_{DG}$ & $\text{roc}^u_{DG}$  & $||e^p||_{L^2}$ & $\text{roc}^p_{L^2}$  & $||e^T||_{L^2}$ & $\text{roc}^T_{L^2}$  \\
    \hline \hline
    \multirow{5}{*}{$\mathbf{(i)}$} 
    & $0.377$ & $0.108$ & $-$ & \num[exponent-product=\ensuremath{\cdot}]{4.805e-4} & $-$ & \num[exponent-product=\ensuremath{\cdot}]{4.909e-4} & $-$  \\
    & $0.214$ & $0.018$ & $3.140$ & \num[exponent-product=\ensuremath{\cdot}]{4.402e-05} & $4.236$ & \num[exponent-product=\ensuremath{\cdot}]{4.383e-05} & $4.281$  \\
    & $0.130$ & $0.003$ & $3.629$ & \num[exponent-product=\ensuremath{\cdot}]{4.035e-06} & $4.780$ & \num[exponent-product=\ensuremath{\cdot}]{4.085e-06} & $4.747$  \\
    & $0.074$ & \num[exponent-product=\ensuremath{\cdot}]{5.373e-4} & $3.062$ & \num[exponent-product=\ensuremath{\cdot}]{3.816e-07} & $4.203$ & \num[exponent-product=\ensuremath{\cdot}]{3.926e-07} & $4.174$  \\
    & $0.040$ & \num[exponent-product=\ensuremath{\cdot}]{9.102e-05} & $2.910$ & \num[exponent-product=\ensuremath{\cdot}]{3.619e-08} & $3.861$ & \num[exponent-product=\ensuremath{\cdot}]{3.906e-08} & $3.782$  \\
    \hline
    \multirow{5}{*}{$\mathbf{(ii)}$} 
    & $0.360$ & $0.108$ & $-$ & \num[exponent-product=\ensuremath{\cdot}]{3.941e-4} & $-$ & \num[exponent-product=\ensuremath{\cdot}]{3.818e-4} & $-$  \\
    & $0.217$ & $0.018$ & $3.555$ & \num[exponent-product=\ensuremath{\cdot}]{3.741e-05} & $4.660$ & \num[exponent-product=\ensuremath{\cdot}]{3.447e-05} & $4.759$  \\
    & $0.127$ & $0.003$ & $3.249$ & \num[exponent-product=\ensuremath{\cdot}]{3.331e-06} & $4.509$ & \num[exponent-product=\ensuremath{\cdot}]{3.023e-06} & $4.537$  \\
    & $0.070$ & \num[exponent-product=\ensuremath{\cdot}]{5.439e-4} & $2.952$ & \num[exponent-product=\ensuremath{\cdot}]{3.388e-07} & $3.849$ & \num[exponent-product=\ensuremath{\cdot}]{3.104e-07} & $3.832$  \\
    & $0.041$ & \num[exponent-product=\ensuremath{\cdot}]{9.327e-05} & $3.320$ & \num[exponent-product=\ensuremath{\cdot}]{3.342e-08} & $4.361$ & \num[exponent-product=\ensuremath{\cdot}]{3.158e-08} & $4.303$  \\
    \hline
    \multirow{ 5}{*}{$\mathbf{(iii)}$} 
    & $0.381$ & $0.106$ & $-$ & \num[exponent-product=\ensuremath{\cdot}]{4.842e-4} & $-$ & \num[exponent-product=\ensuremath{\cdot}]{4.699e-4} & $-$  \\
    & $0.221$ & $0.019$ & $3.190$ & \num[exponent-product=\ensuremath{\cdot}]{4.390e-05} & $4.412$ & \num[exponent-product=\ensuremath{\cdot}]{4.288e-05} & $4.400$  \\
    & $0.123$ & $0.003$ & $3.105$ & \num[exponent-product=\ensuremath{\cdot}]{4.116e-06} & $4.054$ & \num[exponent-product=\ensuremath{\cdot}]{3.945e-06} & $4.086$  \\
    & $0.078$ & \num[exponent-product=\ensuremath{\cdot}]{5.546e-4} & $3.712$ & \num[exponent-product=\ensuremath{\cdot}]{3.945e-07} & $5.122$ & \num[exponent-product=\ensuremath{\cdot}]{4.118e-07} & $4.935$  \\
    & $0.041$ & \num[exponent-product=\ensuremath{\cdot}]{9.363e-05} & $2.710$ & \num[exponent-product=\ensuremath{\cdot}]{3.553e-08} & $3.667$ & \num[exponent-product=\ensuremath{\cdot}]{3.836e-08} & $3.615$  \\
    \hline
    \multirow{ 5}{*}{$\mathbf{(iv)}$} 
    & $0.373$ & $0.106$ & $-$ & \num[exponent-product=\ensuremath{\cdot}]{4.490e-4} & $-$ & \num[exponent-product=\ensuremath{\cdot}]{4.308e-4} & $-$  \\
    & $0.218$ & $0.018$ & $3.263$ & \num[exponent-product=\ensuremath{\cdot}]{4.278e-05} & $4.372$ & \num[exponent-product=\ensuremath{\cdot}]{4.204e-05} & $4.327$  \\
    & $0.130$ & $0.003$ & $3.451$ & \num[exponent-product=\ensuremath{\cdot}]{3.856e-06} & $4.671$ & \num[exponent-product=\ensuremath{\cdot}]{3.947e-06} & $4.592$  \\
    & $0.073$ & \num[exponent-product=\ensuremath{\cdot}]{5.502e-4} & $2.993$ & \num[exponent-product=\ensuremath{\cdot}]{3.856e-07} & $3.990$ & \num[exponent-product=\ensuremath{\cdot}]{4.053e-07} & $3.944$  \\
    & $0.040$ & \num[exponent-product=\ensuremath{\cdot}]{9.397e-05} & $2.937$ & \num[exponent-product=\ensuremath{\cdot}]{3.569e-08} & $3.955$ & \num[exponent-product=\ensuremath{\cdot}]{3.868e-08} & $3.904$  \\
    \hline
    \end{tabular}
    \\[5pt]
\caption{Robustness test - linear steady problem - polynomial degree $\ell = 3$. The parameters are chosen as in Table~\ref{tab:linearsteady_coeff_robust}.}
\label{tab:linearsteady_robust_p3}
\end{table}

\begin{table}[H]
    \footnotesize
    \centering 
    \begin{tabular}{c  c  c  c  c  c  c  c}
    Test & $1/h$ & $||e^{\textbf{u}}||_{DG}$ & $\text{roc}^u_{DG}$  & $||e^p||_{L^2}$ & $\text{roc}^p_{L^2}$  & $||e^T||_{L^2}$ & $\text{roc}^T_{L^2}$  \\
    \hline \hline
    \multirow{5}{*}{$\mathbf{(i)}$} 
    & $0.792$ & $0.041$ & $-$ & $0.001$ & $-$ & $0.002$ & $-$  \\
    & $0.409$ & $0.012$ & $1.930$ & \num[exponent-product=\ensuremath{\cdot}]{1.106e-4} & $3.667$ & \num[exponent-product=\ensuremath{\cdot}]{1.492e-4} & $3.636$  \\
    & $0.220$ & $0.003$ & $2.165$ & \num[exponent-product=\ensuremath{\cdot}]{1.054e-05} & $3.786$ & \num[exponent-product=\ensuremath{\cdot}]{1.439e-05} & $3.7672$  \\
    & $0.113$ & \num[exponent-product=\ensuremath{\cdot}]{7.817e-4} & $2.030$ & \num[exponent-product=\ensuremath{\cdot}]{1.112e-06} & $3.385$ & \num[exponent-product=\ensuremath{\cdot}]{1.712e-06} & $3.204$  \\
    & $0.056$ & \num[exponent-product=\ensuremath{\cdot}]{1.959e-4} & $1.962$ & \num[exponent-product=\ensuremath{\cdot}]{1.295e-07} & $3.049$ & \num[exponent-product=\ensuremath{\cdot}]{2.785e-07} & $2.575$  \\
    \hline
    \multirow{5}{*}{$\mathbf{(ii)}$} 
    & $0.792$ & $0.041$ & $-$ & $0.024$ & $-$ & $0.024$ & $-$  \\
    & $0.409$ & $0.011$ & $1.916$ & $0.004$ & $2.758$ & $0.004$ & $2.758$  \\
    & $0.220$ & $0.004$ & $2.165$ & \num[exponent-product=\ensuremath{\cdot}]{7.047e-4} & $2.747$ & \num[exponent-product=\ensuremath{\cdot}]{7.050e-4} & $2.747$  \\
    & $0.113$ & \num[exponent-product=\ensuremath{\cdot}]{7.882e-4} & $2.032$ & \num[exponent-product=\ensuremath{\cdot}]{1.617e-4} & $2.216$ & \num[exponent-product=\ensuremath{\cdot}]{1.620e-4} & $2.214$  \\
    & $0.056$ & \num[exponent-product=\ensuremath{\cdot}]{1.977e-4} & $1.961$ & \num[exponent-product=\ensuremath{\cdot}]{4.338e-05} & $1.866$ & \num[exponent-product=\ensuremath{\cdot}]{4.349e-05} & $1.864$  \\
    \hline
    \multirow{ 5}{*}{$\mathbf{(iii)}$} 
    & $0.792$ & $0.041$ & $-$ & $0.001$ & $-$ & $4.941$ & $-$ \\
    & $0.409$ & $0.012$ & $1.916$ & \num[exponent-product=\ensuremath{\cdot}]{1.112e-4} & $3.690$ & $0.082$ & $6.207$ \\
    & $0.220$ & $0.003$ & $2.165$ & \num[exponent-product=\ensuremath{\cdot}]{1.132e-05} & $3.680$ & $0.013$ & $2.947$ \\
    & $0.113$ & \num[exponent-product=\ensuremath{\cdot}]{7.882e-4} & $2.032$ & \num[exponent-product=\ensuremath{\cdot}]{1.786e-06} & $2.780$ & $0.001$ & $3.372$ \\
    & $0.056$ & \num[exponent-product=\ensuremath{\cdot}]{1.977e-4} & $1.961$ & \num[exponent-product=\ensuremath{\cdot}]{3.675e-07} & $2.242$ & \num[exponent-product=\ensuremath{\cdot}]{1.011e-4} & $3.729$ \\
    \hline
    \multirow{ 5}{*}{$\mathbf{(iv)}$} 
    & $0.792$ & $0.041$ & $-$ & $0.043$ & $-$ & $0.002$ & $-$  \\
    & $0.409$ & $0.012$ & $1.916$ & $0.007$ & $2.755$ & \num[exponent-product=\ensuremath{\cdot}]{1.492e-4} & $3.632$  \\
    & $0.220$ & $0.003$ & $2.165$ & $0.001$ & $2.745$ & \num[exponent-product=\ensuremath{\cdot}]{1.394e-05} & $3.818$  \\
    & $0.113$ & \num[exponent-product=\ensuremath{\cdot}]{7.882e-4} & $2.032$ & \num[exponent-product=\ensuremath{\cdot}]{2.911e-4} & $2.216$ & \num[exponent-product=\ensuremath{\cdot}]{1.473e-06} & $3.383$  \\
    & $0.056$ & \num[exponent-product=\ensuremath{\cdot}]{1.977e-4} & $1.961$ & \num[exponent-product=\ensuremath{\cdot}]{7.806e-05} & $1.867$ & \num[exponent-product=\ensuremath{\cdot}]{1.636e-07} & $3.116$  \\
    \hline
    \end{tabular}
    \\[5pt]
\caption{Robustness test - non-linear time-dependent problem - polynomial degree $\ell = 2$}
\label{tab:p2}
\end{table}

\subsection{Geothermal model problem}
\label{sec:geothermal}
We show now the results for a realistic model problem inspired by geothermal energy production. We consider a rectangular domain $\Omega = (0,4\si{\meter}) \times (0,1\si{\meter})$. As shown in Figure~\ref{fig:geothermal_compdom} we consider the domain to be a $2$D-slice of the subsoil and we build a polygonal mesh on it. We consider a uniform mesh made of $1000$ polygons with mesh size $h \sim 0.13 \si{\meter}$.
\begin{figure}[H]
 \includegraphics[width=1\textwidth]{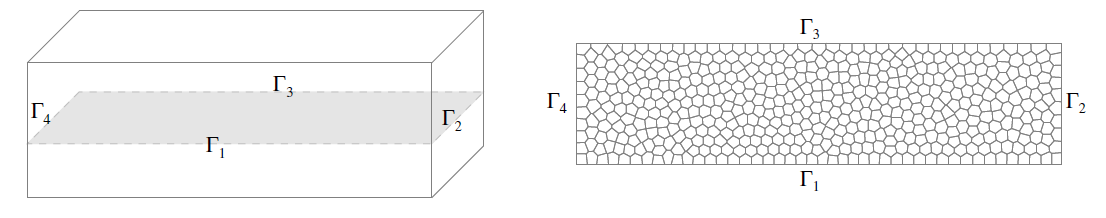}
\caption{Representation of the computational domain (left) and example of Voronoi mesh with $500$ elements (right).}
\label{fig:geothermal_compdom}
\end{figure}

In order to mimic the injection and extraction of a fluid (e.g. water) in the subsoil we impose Dirichlet boundary conditions on $\Gamma_2, \Gamma_4$. In the following description of the case test we will denote by $T_{\text{inj}}, p_{\text{inj}}, T_{\text{ext}}, p_{\text{ext}}$, the temperature and pressure of injection and extraction, respectively. We supplement our problem with zero loading terms $\mathbf{f}, g, h$, zero initial conditions $(\mathbf{u}_0, p_0, T_0)$, and with the following choice for the boundary conditions:
\begin{equation}
    \begin{aligned}
    & \boldsymbol{\sigma} \mathbf{n} = 0, \quad && \mathbf{K} \nabla p \cdot \mathbf{n}  = 0, \quad && \gamma (T-T_{\text{ext}}) + \boldsymbol{\Theta} \nabla T \cdot \mathbf{n} = 0 \quad && \text{on} \ \Gamma_1 \\
    & \mathbf{u} = 0, \quad && p = p_{\text{ext}}, \quad && T = T_{\text{ext}} \quad && \text{on} \ \Gamma_2 \\
    & \boldsymbol{\sigma} \mathbf{n} = 0, && \mathbf{K} \nabla p \cdot \mathbf{n} = 0, && \gamma (T-T_{\text{ext}}) + \boldsymbol{\Theta} \nabla T \cdot \mathbf{n} = 0 \quad && \text{on} \ \Gamma_3 \\
    & \mathbf{u} = 0, && p = p_{\text{inj}}, && T = T_{\text{inj}} \quad && \text{on} \ \Gamma_4 \\
    \end{aligned}
\end{equation}
where the parameter $\gamma$ is taken equal to $0.01$. We perform the simulations with a realistic choice of parameters, in particular they are taken identical to \cite{Brun2020}. We show the results up to $T_f = 3 \si{\hour}$ of simulation computed with $\Delta t = \num[exponent-product=\ensuremath{\cdot}]{5e-4} \si{h}$. The approximation degree is $\ell = 1$. We consider and compare two different scenarios where we vary the injection temperature. In both cases, we require an average of $3$ fixed point iterations in order to reach convergence.
\begin{figure}
\begin{subfigure}[t]{.49\textwidth}
    \centering
    \includegraphics[width=1\textwidth]{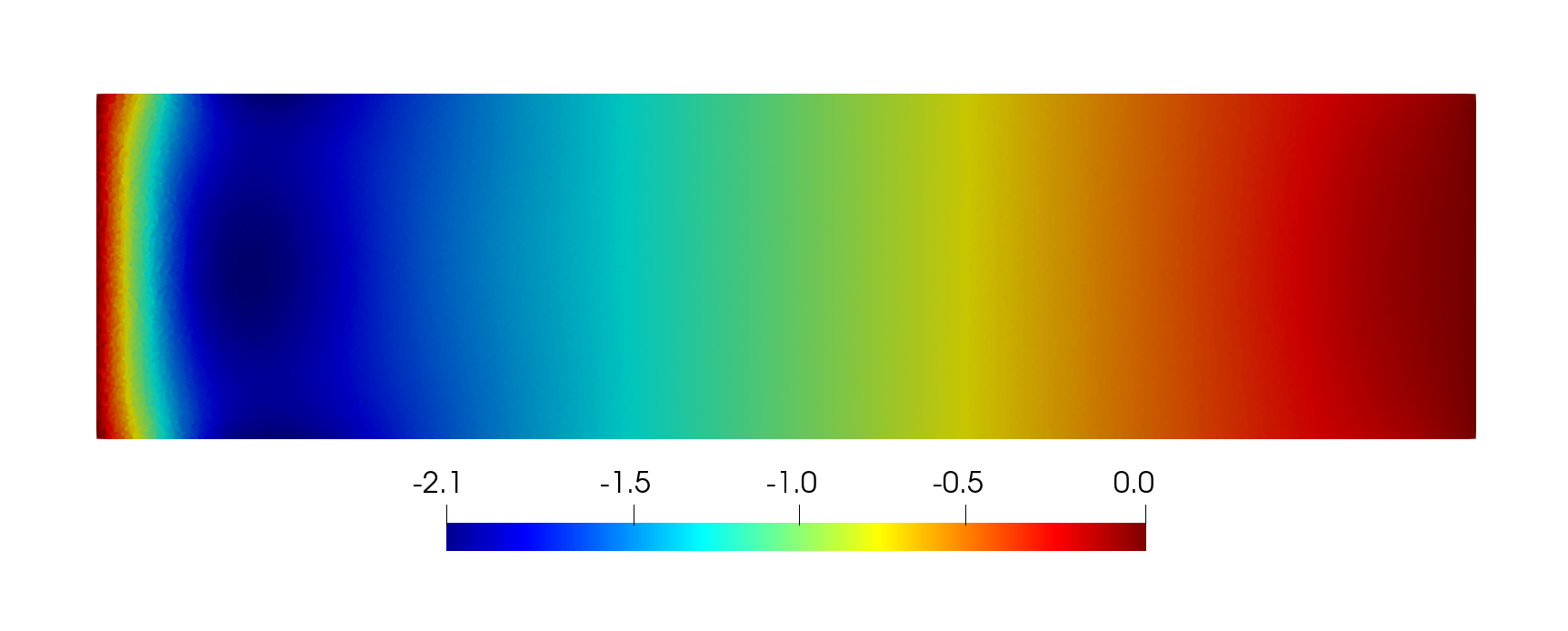}
    \caption{\footnotesize Numerical solution of the horizontal displacement $[\si{\cm}]$}
    \label{fig:u1_tinj60_pinj1}
\end{subfigure}
\begin{subfigure}[t]{.49\textwidth}
    \centering
    \includegraphics[width=1\textwidth]{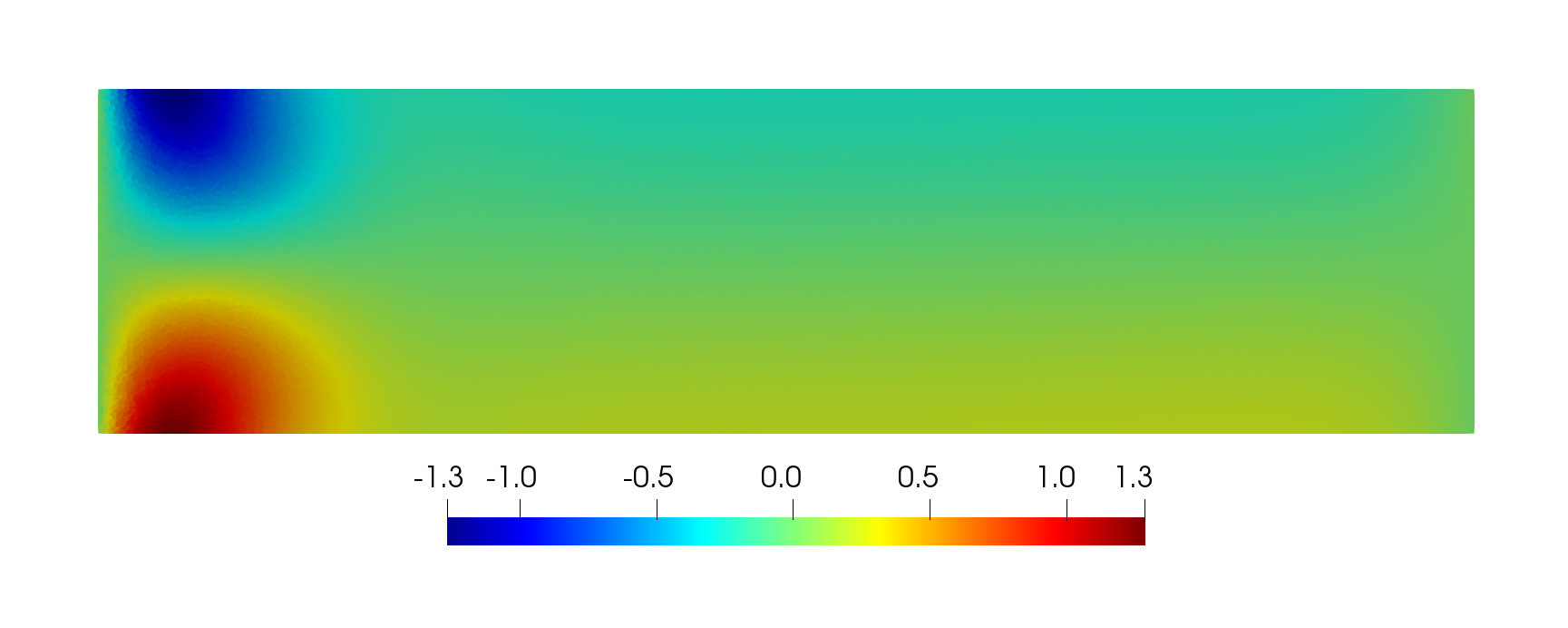}
    \caption{\footnotesize Numerical solution of the vertical displacement $[\si{\cm}]$}
    \label{fig:u2_tinj60_pinj1}
\end{subfigure}
\begin{subfigure}[b]{.49\textwidth}
    \centering
    \includegraphics[width=1\textwidth]{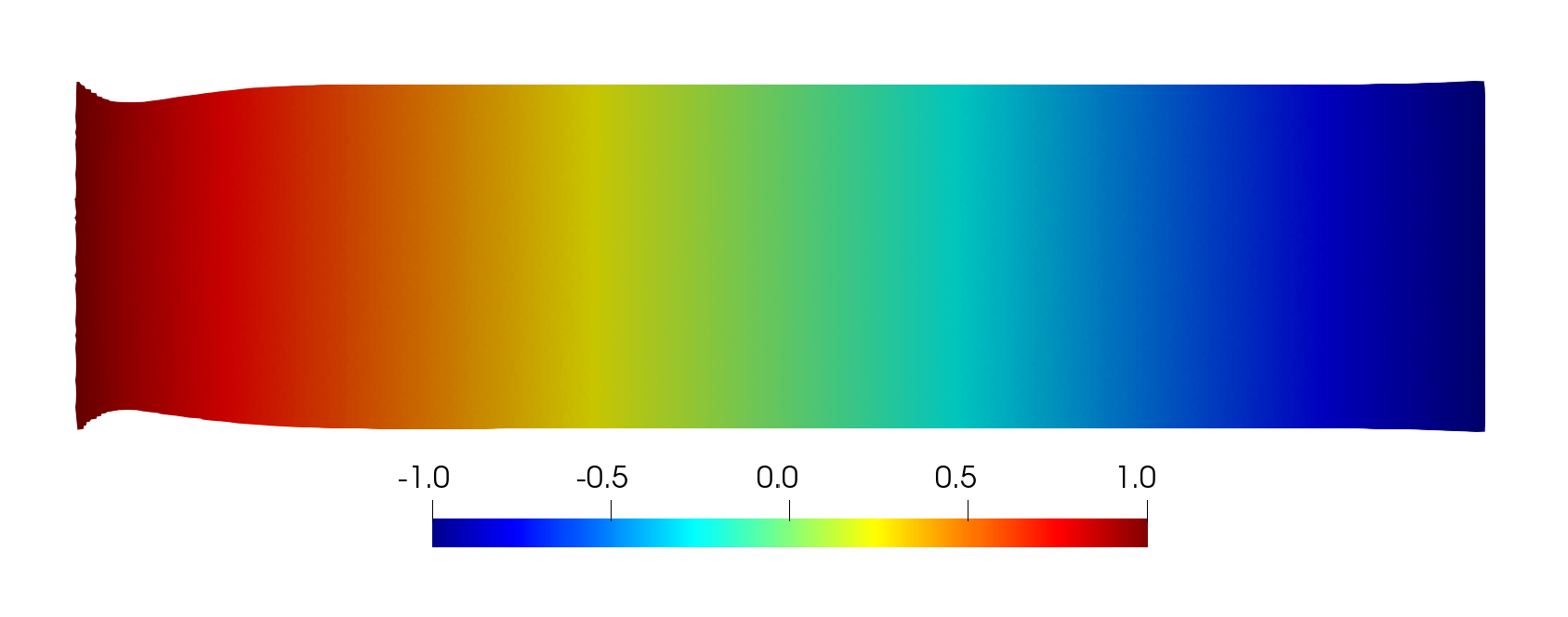}
    \caption{\footnotesize Numerical solution of the pressure $[\si{\mega \pascal}]$}
    \label{fig:Press_tinj60_pinj1}
\end{subfigure}
\begin{subfigure}[b]{.49\textwidth}
    \centering
    \includegraphics[width=1\textwidth]{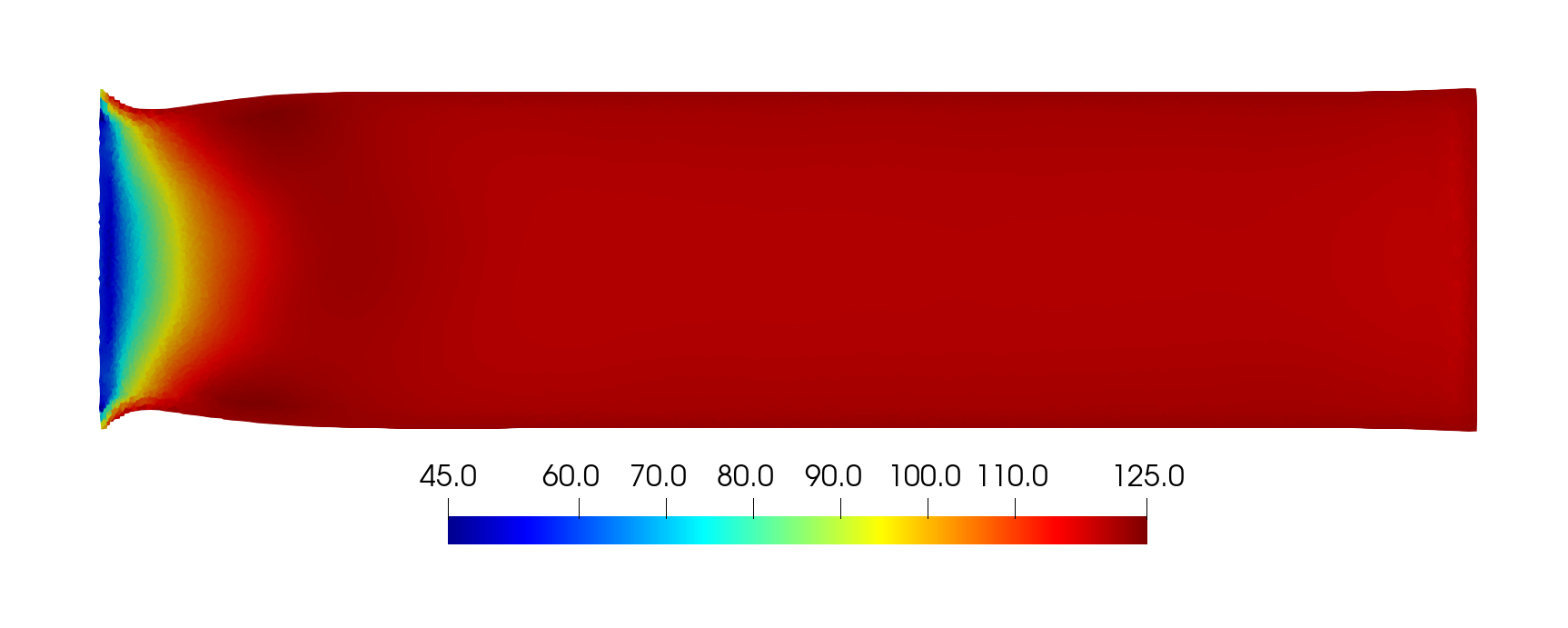}
    \caption{\footnotesize Numerical solution of the temperature $[\si{\celsius}]$}
    \label{fig:Temp_tinj60_pinj1}
\end{subfigure}
\caption{Simulation's parameters: $T_{\text{inj}} = 60 \si{\celsius}, \ \ T_{\text{ext}} = 120 \si{\celsius}, \ p_{\text{inj}} = 1 \si{\mega\pascal}, \  p_{\text{ext}} = -1 \si{\mega\pascal}$. (deformation magnified by a factor 5)}
\label{fig:geo_tinj60}
\end{figure}

\begin{figure}
\begin{subfigure}[t]{.49\textwidth}
    \centering
    \includegraphics[width=1\textwidth]{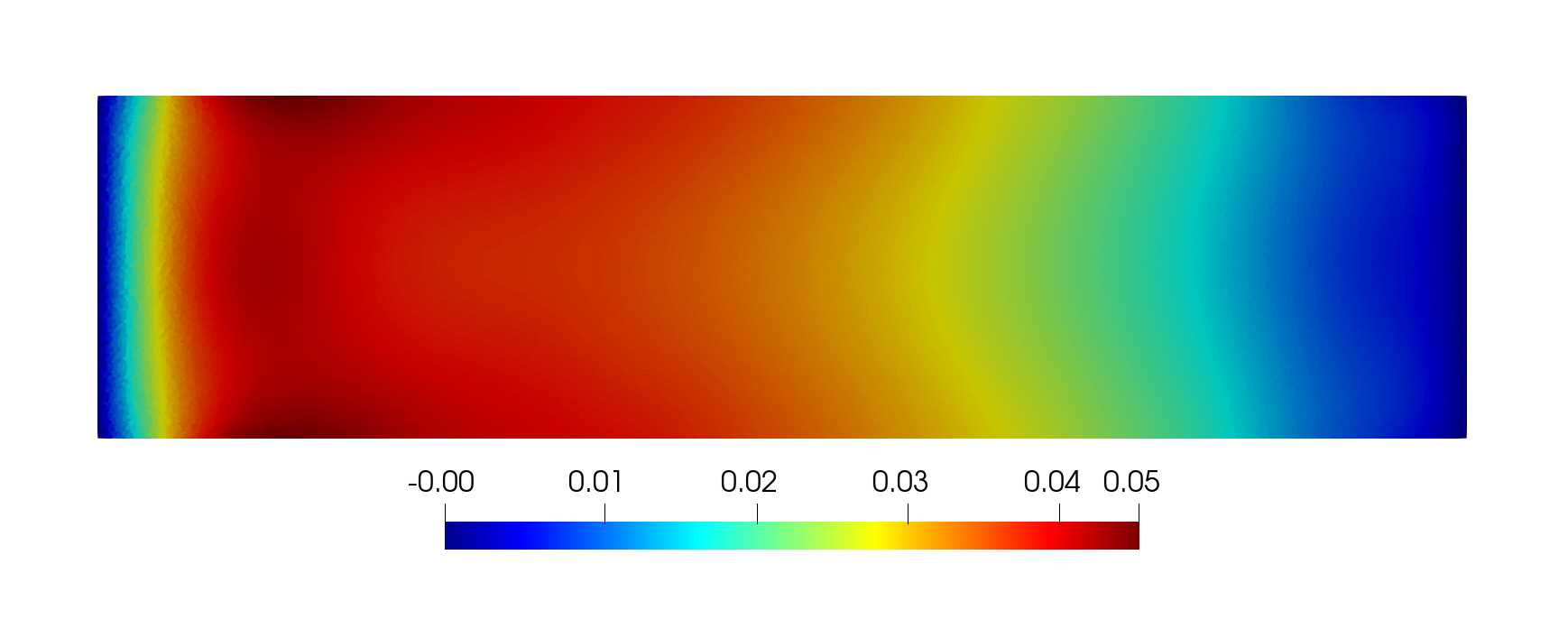}
    \caption{\footnotesize Numerical solution of the horizontal displacement $[\si{\mm}]$}
    \label{fig:u1_tinj120_pinj1}
\end{subfigure}
\begin{subfigure}[t]{.49\textwidth}
    \centering
    \includegraphics[width=1\textwidth]{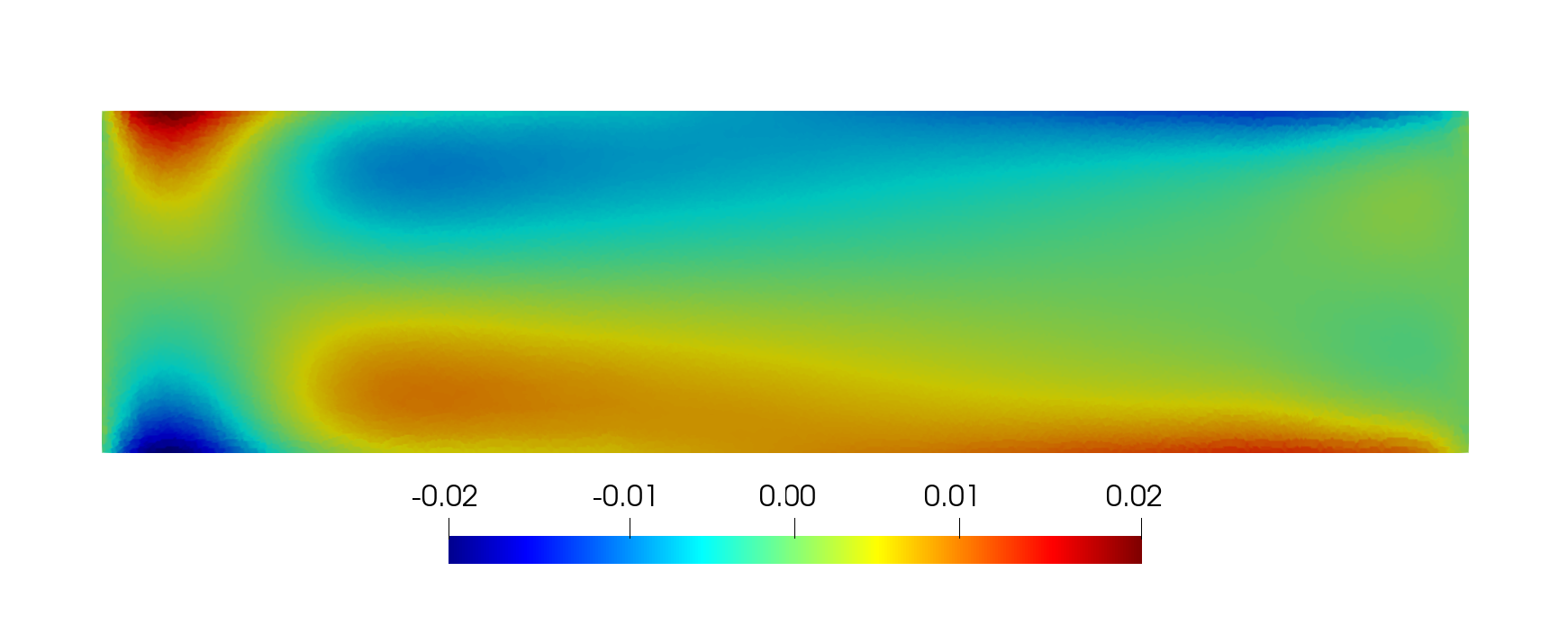}
    \caption{\footnotesize Numerical solution of the vertical displacement $[\si{\mm}]$}
    \label{fig:u2_tinj120_pinj1}
\end{subfigure}
\begin{subfigure}[b]{.49\textwidth}
    \centering
    \includegraphics[width=1\textwidth]{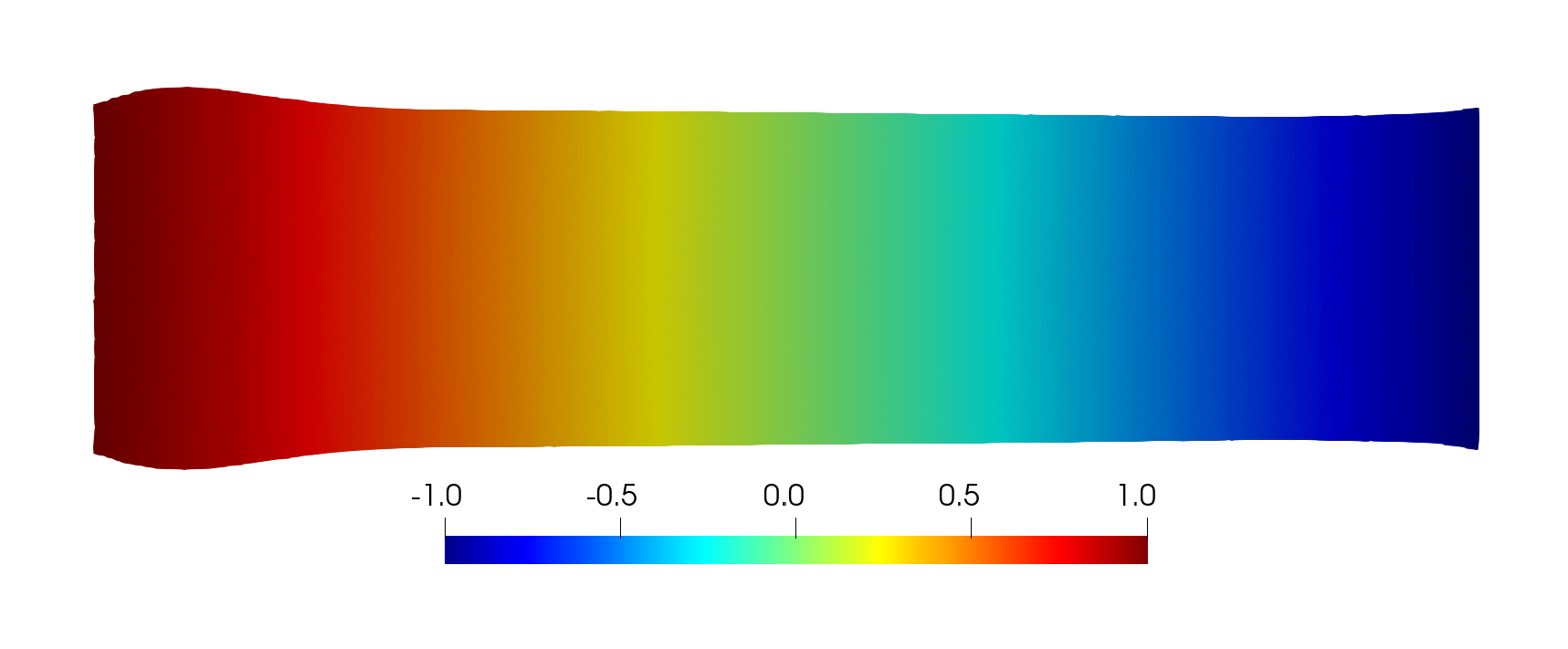}
    \caption{\footnotesize Numerical solution of the pressure $[\si{\mega \pascal}]$}
    \label{fig:Press_tinj120_pinj1}
\end{subfigure}
\begin{subfigure}[b]{.49\textwidth}
    \centering
    \includegraphics[width=1\textwidth]{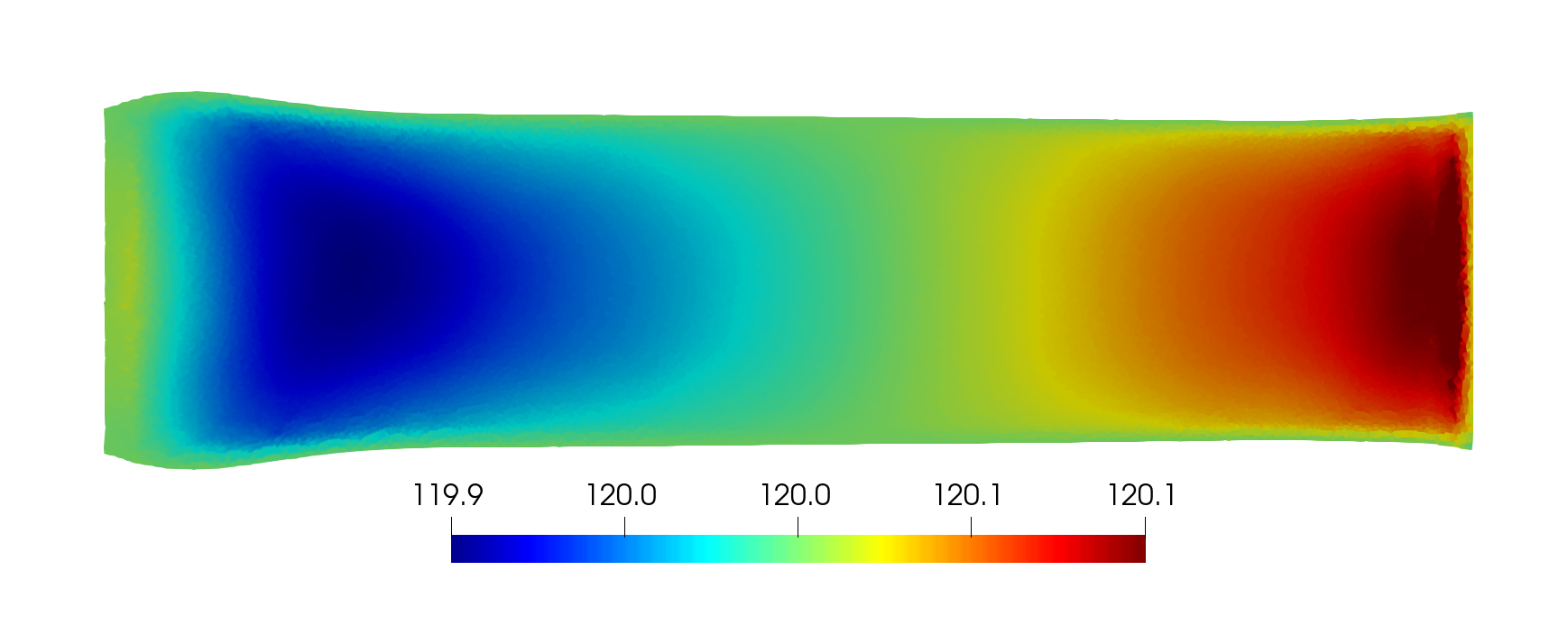}
    \caption{\footnotesize Numerical solution of the temperature $[\si{\celsius}]$}
    \label{fig:Temp_tinj120_pinj1}
\end{subfigure}
\caption{Simulation's parameters: $T_{\text{inj}} = 120 \si{\celsius}, \ \ T_{\text{ext}} = 120 \si{\celsius}, \ p_{\text{inj}} = 1 \si{\mega\pascal}, \  p_{\text{ext}} = -1 \si{\mega\pascal}$ (deformation magnified by a factor 2500)}
\label{fig:geo_tinj120}
\end{figure}

In Figure~\ref{fig:geo_tinj60} we show the displacement, pressure and temperature fields for a simulation with $T_{\text{inj}} = 60 \si{\celsius}$. First, we observe that the injected fluid is rapidly brought to the reference temperature by the system (cf. Figure~\ref{fig:Temp_tinj60_pinj1}). Second, we see that the injection of cold fluid results in a negative horizontal shift (Figure~\ref{fig:u1_tinj60_pinj1}) and a vertical shift that tends to tighten the region in the initial part of the domain (Figure~\ref{fig:u2_tinj60_pinj1}). Finally, we observe that the pressure decreases linearly along the domain (Figure~\ref{fig:Press_tinj60_pinj1}). In order to better understand the role of the temperature we compare the aforementioned results with the ones in Figure~\ref{fig:geo_tinj120} where the fluid is injected at $T_{\text{inj}} = T_{\text{ext}} = 120 \si{\celsius}$. We observe that, due to the pressure-temperature coupling, we have a slight variation of temperature in two macro-areas of the domain. In the initial part we have a swelling phenomenon (unlike the previous case), while in the second part of the domain the temperature variations manifest their effect. This can be observed particularly by comparing \ref{fig:u2_tinj120_pinj1} and \ref{fig:Temp_tinj120_pinj1}. Moreover, looking at the horizontal displacement depicted in Figure~\ref{fig:u1_tinj120_pinj1}, we notice that the domain tends to move towards the right, namely the opposite direction compared to Figure~\ref{fig:u1_tinj60_pinj1}. In the rightmost part of the domain the deformations due to fluid extraction, for which we would expect shrinking of the domain, are balanced by the slight temperature variation. To wrap up, we observe that with this choice of the parameters, the variations of temperature are more significant than the variations of pressure. This is confirmed by comparing the magnitude of the displacements in the two case tests.
We remark that, for the sake of representation, in Figure~\ref{fig:Temp_tinj60_pinj1}, Figure~\ref{fig:Press_tinj60_pinj1}, Figure~\ref{fig:Temp_tinj120_pinj1}, and Figure~\ref{fig:Press_tinj120_pinj1} the deformations of the domain are scaled by an appropriate factor (indicated in the figures' captions).

\section{Conclusions}
\label{sec:conclusion}
In this work, we have presented a four-field PolyDG formulation for the non-linear fully-coupled thermo-poroelastic problem. The stability estimate and error estimate in the semi-discrete framework are carried out highlighting the arbitrary-order property of the analyzed method. Numerical simulations are performed to test the convergence and robustness properties of the proposed method.
The results confirm the theoretical estimates and show that the method provides a good approximation also when considering limit cases in the ranges of physical parameters. Finally, a geothermal model problem is presented showing that, with appropriate choices of the parameters, the PolyDG discretization scheme can be appealing for real problems' simulations.

\bibliographystyle{abbrv}
\bibliography{bibliography}

\end{document}